\theoremstyle{plain}
\newtheorem{corollary}{\bf Corollary}
\newtheorem{example}{\bf Example}
\newtheorem{lemma}{\bf Lemma}
\newtheorem{proposition}{\bf Proposition}
\newtheorem{remark}{\bf Remark}
\newtheorem{theorem}{\bf Theorem}
{}
\numberwithin{equation}{section}
\newcommand\dm{\mathrm{dm}}
\newcommand\dn{\mathrm{d\mu}}
\newcommand{\g}[2]{\mbox{$\langle #1 ,#2 \rangle$}}
\newcommand\dv{\mathrm{div}}
\newcommand\Lu{\mathscr{L}}
\newcommand\tr{\mathrm{tr}}
\begin{document}

\title[Eigenvalue estimates on pinched Hadamard manifolds]{Eigenvalue estimates of the drifted Cheng-Yau operator on bounded domains in pinched Cartan-Hadamard manifolds}
\author[Júlio C. M. da Fonseca]{Júlio C. M. da Fonseca$^1$}
\author[José N. V. Gomes]{José N. V. Gomes$^2$}
\address{$^1$Centro de Estudos Superiores de Parintins, Universidade do Estado do Amazonas, Parintins, Amazonas, Brazil.}
\address{$^{2}$Departamento de Matemática, Universidade Federal de São Carlos, São Carlos, São Paulo, Brazil.}
\email{$^1$jcmfonseca@uea.edu.br}
\email{$^2$jnvgomes@ufscar.br; jnvgomes@pq.cnpq.br}
\urladdr{$^1$http://www3.uea.edu.br}
\urladdr{$^2$https://www2.ufscar.br}
\keywords{Eigenvalue problems, Elliptic operators, Laplacian, Cartan-Hadamard manifold}
\subjclass[2010]{Primary 47A75; Secondary 58J50, 53C20.}

\begin{abstract}
We show how a Bochner type formula can be used to establish universal inequalities for the eigenvalues of the drifted Cheng-Yau operator on a bounded domain in a pinched Cartan-Hadamard manifold with the Dirichlet boundary condition. In the first theorem, the hyperbolic space case is treated in an independent way. For the more general setting, we first establish a Rauch comparison theorem for the Cheng-Yau operator and two estimates associated with the Bochner type formula for this operator. Next, we get some integral estimates of independent interest. As an application, we compute our universal inequalities. In particular, we obtain the corresponding inequalities for both Cheng-Yau operator and drifted Laplacian cases, and we recover the known inequalities for the Laplacian case. We also obtain a rigidity result for a Cheng-Yau operator on a class of bounded annular domains in a pinched Cartan-Hadamard manifold. In particular, we can use, e.g., the potential function of the Gaussian shrinking soliton to obtain such a rigidity for the Euclidean space case. The fundamental gap conjecture is also addressed in this paper.
\end{abstract}

\maketitle

\section{Introduction}
In this paper $M^n$, $n\geq2$, is an $n$-dimensional simply connected smooth manifold with a geodesically complete Riemannian metric $\langle,\rangle$ of sectional curvatures satisfying $-\kappa_1^2\leq K\leq-\kappa_2^2$, where $0\leq \kappa_2\leq \kappa_1$ are constants. It has been called a pinched Cartan-Hadamard manifold after the Cartan-Hadamard theorem, which says that a simply connected geodesically complete Riemannian manifold with nonpositive sectional curvatures is diffeomorphic to a Euclidean space. The two key properties on Cartan-Hadamard manifolds are that any two points in $M^n$ lie on a unique geodesic, and that distance functions are everywhere smooth and convex, see, e.g., Bishop and O'Neill~\cite{Bishop-O'Neil} or the book by Petersen~\cite{Petersen}. Consider a bounded domain $\Omega\subset M^n$ with smooth boundary $\partial\Omega$. Let $T$ be a symmetric positive definite $(1,1)$--tensor on $M^n$, and $\eta$ be a smooth function on $M^n$. Due to the boundedness of $\Omega$ there exist two positive constants $\varepsilon$ and $\delta$ such that $\varepsilon\leq\langle TX,X\rangle\leq\delta$ for any unit vector field $X$ on $\Omega$.

Our aim here is to study the following eigenvalue problem with Dirichlet boundary condition:
\begin{equation*}
\left\{
\begin{array}{ccccc}
-\mathscr{L}u &=& \lambda u &\hbox{in} & \Omega,\\
u&=& 0 & \hbox{on} & \partial\Omega,
\end{array}
\right.
\end{equation*}
where
\begin{equation}\label{defLintro}
\mathscr{L}u=\dv (T(\nabla u))-\langle\nabla\eta, T(\nabla u)\rangle,
\end{equation}
Here, $\dv$ stands for the divergence of smooth vector fields, and $\nabla$ for the gradient of smooth functions.

Alencar, Neto and Zhou~\cite{AGD} showed the Bochner type formula~\eqref{BFCYOp} for the operator that has been introduced by Cheng and Yau~\cite{Cheng-Yau} which can be understood as follows
\begin{equation}\label{quadrado}
\Box f:=\mathrm{tr}(\nabla^2f\circ T)=\langle\nabla^2f,T\rangle,
\end{equation}
where $\nabla^2f$ is the Hessian of a smooth function $f$ on $M^n$. Shortly after that, the second author and Miranda~\cite{GomesMiranda} showed that Eq.~\eqref{defLintro} can be decomposed into the \emph{$\eta$--divergence tensor} of $T$ and the \emph{Cheng-Yau operator}:
\begin{equation}\label{L-Box-div}
\mathscr{L} f=\Box f+\langle\dv_{\eta}T,\nabla f\rangle,
\end{equation}
where $\dv_{\eta}T:=\dv T-\langle\nabla\eta,T(\cdot)\rangle$, and $\dv T$ stands for the divergence tensor of $T$, see Section~\ref{Sec-L}.

For orientable compact Riemannian manifolds, it has been proved in \cite{Cheng-Yau} that the operator $\Box$ is self-adjoint if and only if $T$ is divergence free, i.e., $\dv T=0$. In this case, Eq.~\eqref{L-Box-div} becomes
\begin{equation}\label{DCYOp}
\mathscr{L} f=\Box f - \langle\nabla\eta,T(\nabla f)\rangle,
\end{equation}
which is a first order perturbation of the Cheng-Yau operator. We call Eq.~\eqref{DCYOp} a \emph{drifted Cheng-Yau operator} with a \emph{drifting function} $\eta$. In particular, if $\eta$ is constant, then $\square f$ is a Cheng-Yau operator with $\dv T=0.$

For instance, if $Ric$ stands for the Ricci tensor of $\langle,\rangle$ and $R=\mathrm{tr}(Ric)$, then, it is known that $\dv Ric = \frac{\mathrm{d}R}{2}$ and $\dv(RI)=\mathrm{d}R$, so the Einstein tensor $G:=Ric-\frac{R}{2}I$ is divergence free, therefore $\Box f=\langle\nabla^2f,G\rangle$ is self-adjoint on compact Riemannian manifolds, and the drifted Cheng-Yau operator $\mathscr{L} f=\langle\nabla^2f,G\rangle - \langle\nabla\eta,G(\nabla f)\rangle$ is likely to have applications in physics, see, e.g., Serre~\cite{Serre}. We highlight that Serre's work deals with divergence free positive definite symmetric tensors and fluid dynamics, there the reader can find examples and know where these tensors occur. In the last part of Section~\ref{Sec-L}, we list some geometric examples of such tensors. We refer the reader to Proposition~5.1 in \cite{AGD} or Navarro~\cite{Navarro} for more related discussions.

The Bochner type formula in~\cite{AGD} has been extended in~\cite{GomesMiranda} for the more general expression of $\mathscr{L}$, see Eq.~\eqref{BF}. Moreover, it was observed that $\mathscr{L}$ is self-adjoint in the Hilbert space $\mathcal{H}_0^1(\Omega,e^{-\eta}dvol_\Omega)$, see Section~\ref{Sec-L}. It is known that the Bochner technique gives many optimal bounds on the topology of compact Riemannian manifolds with nonnegative curvature. In contrast, we show herewith how the Bochner type formula can be used to establish universal inequalities for the eigenvalues of the drifted Cheng-Yau operator on a bounded domain in a pinched Cartan-Hadamard manifold. This is a simple approach that has not been used yet for this operator. First, we establish a Rauch comparison theorem for the Cheng-Yau operator and two inequalities associated with the Bochner type formula for this operator, see Proposition~\ref{Lem-LHCThm}. Next, we get some integral estimates of independent interest, see Propositions~\ref{lemmaboxestimate} and \ref{Prop-C0}. As an application, we compute our main inequalities.

Let us consider the hyperbolic space $\mathbb{H}^n(-1)$ with constant curvature $-1$, namely, the open half space $x_n >0$ with its standard metric $g_{ij}=x_n^{-2}\delta_{ij}$. It is known that $r(x_1,\ldots,x_n)=\ln x_n$ works as a distance function on $\mathbb{H}^n(-1)$. This fact has been our motivation for the first theorem of the paper, in which we prove a universal quadratic inequality for the eigenvalues of $\mathscr{L}$ on any bounded domain in hyperbolic space $\mathbb{H}^n(-\kappa^2)$ with constant curvature $-\kappa^2$ in the upper half space model. We observe that the hyperbolic space is treated in an independent way in this first result.

In what follows, a smooth function $f$ is called radially constant if its radial derivative vanish: $\partial_r f=0$, where $r$ is a distance function.

\begin{theorem}\label{propCYT-c3}
Let $\lambda_i$ be the $i$-th eigenvalue of $\mathscr{L}$ on a bounded domain $\Omega\subset \mathbb{H}^n(-\kappa^2)$ with the Dirichlet boundary condition. If the drifting function $\eta$ is radially constant and $T(\nabla\ln x_n)=\psi \nabla\ln x_n$, for some radially constant function $\psi$. Then, for any positive integer $k$, we have
\begin{eqnarray*}
\displaystyle\sum_{i=1}^k(\lambda_{k+1}-\lambda_i)^2 &\leq&\frac{1}{\varepsilon}\sum_{i=1}^k(\lambda_{k+1} -\lambda_i)\Big(\frac{4\delta^2}{\varepsilon}\lambda_i-(n-1)^2\varepsilon^2\kappa^2\Big).
\end{eqnarray*}
Moreover, the first eigenvalue satisfies $\lambda_1(\Omega)\geq\frac{\varepsilon}{4\delta^2}(n-1)^2\varepsilon^2\kappa^2>0,$ since $n\geq2$ and $\kappa>0.$
\end{theorem}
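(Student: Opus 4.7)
The plan is a Rayleigh--Ritz / Yang commutator argument, with the horospherical distance function $h=\kappa^{-1}\ln x_n$ on $\mathbb{H}^n(-\kappa^2)$ playing the role of the multiplier that a coordinate $x_\alpha$ plays in the classical Yang proof for the Euclidean Laplacian. Two explicit facts on $\mathbb{H}^n(-\kappa^2)$ form the geometric backbone: $|\nabla h|^2=1$ and $\Delta h$ equals the pointwise constant $-(n-1)\kappa$; both follow from a direct coordinate computation in the upper half-space model. Because $\nabla h$ is parallel to $\nabla\ln x_n$, the hypothesis $T(\nabla\ln x_n)=\psi\nabla\ln x_n$ is equivalent to $T(\nabla h)=\psi\nabla h$ with the same radially constant $\psi\in[\varepsilon,\delta]$, and the radial constancy of $\eta$ reads $\langle\nabla\eta,\nabla h\rangle=0$. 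Together these yield $\mathscr{L} h=\mathrm{div}(\psi\nabla h)-\psi\langle\nabla\eta,\nabla h\rangle=\psi\Delta h$, so $|\mathscr{L} h|\geq(n-1)\kappa\varepsilon$, along with the Leibniz-type commutator identity
\begin{equation*}
\mathscr{L}(hu_i)=h\,\mathscr{L} u_i+2\psi\,\langle\nabla h,\nabla u_i\rangle+u_i\,\mathscr{L} h.
\end{equation*}

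Let $\{u_i\}$ be an orthonormal system of eigenfunctions of $-\mathscr{L}$ in $L^2(\Omega,e^{-\eta}\,dv_g)$, and set
\begin{equation*}
\phi_i:=hu_i-\sum_{j=1}^k a_{ij}u_j,\quad a_{ij}:=\int_\Omega hu_iu_j\,e^{-\eta}\,dv_g,\quad w_i:=2\psi\,\langle\nabla h,\nabla u_i\rangle+u_i\,\mathscr{L} h,
\end{equation*}
so that $\phi_i\perp u_1,\dots,u_k$ in the weighted $L^2$. Rayleigh--Ritz applied to $\phi_i$ together with the commutator identity and orthogonality gives the basic inequality $(\lambda_{k+1}-\lambda_i)\|\phi_i\|^2\leq-\int_\Omega\phi_iw_i\,e^{-\eta}\,dv_g$.

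The heart of the argument is the estimate on $\|w_i\|^2$. Expanding the square, the cross term $4\int\psi u_i\mathscr{L} h\,\langle\nabla h,\nabla u_i\rangle e^{-\eta}$ is integrated by parts against $e^{-\eta}$; the radial constancy of $\psi,\eta$ together with $T\nabla h=\psi\nabla h$ force the exact identity $\mathrm{div}_\eta(\mathscr{L} h\cdot T\nabla h)=(\mathscr{L} h)^2$, which collapses the cross term to $-2\int u_i^2(\mathscr{L} h)^2 e^{-\eta}$. Hence
\begin{equation*}
\|w_i\|^2=4\int_\Omega\psi^2\,\langle\nabla h,\nabla u_i\rangle^2 e^{-\eta}\,dv_g-\int_\Omega u_i^2(\mathscr{L} h)^2 e^{-\eta}\,dv_g.
\end{equation*}
The pointwise bounds $\psi^2\langle\nabla h,\nabla u_i\rangle^2\leq\delta^2|\nabla u_i|^2\leq\frac{\delta^2}{\varepsilon}\langle T\nabla u_i,\nabla u_i\rangle$ and $(\mathscr{L} h)^2\geq(n-1)^2\varepsilon^2\kappa^2$, combined with the Dirichlet Rayleigh identity $\int_\Omega\langle T\nabla u_i,\nabla u_i\rangle e^{-\eta}\,dv_g=\lambda_i$ and $\|u_i\|^2=1$, yield the uniform estimate
\begin{equation*}
\|w_i\|^2\leq\frac{4\delta^2}{\varepsilon}\lambda_i-(n-1)^2\varepsilon^2\kappa^2.
\end{equation*}

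The proof is completed by the standard Yang-type symmetrization. Cauchy--Schwarz applied to the basic inequality gives $(\lambda_{k+1}-\lambda_i)^2\|\phi_i\|^2\leq\|w_i\|^2$. Multiplying by $(\lambda_{k+1}-\lambda_i)$, summing over $i$, and using the symmetry $a_{ij}=a_{ji}$ to telescope the off-diagonal contributions into the non-negative quantity $\sum_{i<j}(\lambda_j-\lambda_i)^2 a_{ij}^2$ (which is discarded), a weighted Cauchy--Schwarz in the index $i$ absorbs the factor $\|\phi_i\|^2$ on the left and produces
\begin{equation*}
\sum_{i=1}^k(\lambda_{k+1}-\lambda_i)^2\leq\frac{1}{\varepsilon}\sum_{i=1}^k(\lambda_{k+1}-\lambda_i)\left(\frac{4\delta^2}{\varepsilon}\lambda_i-(n-1)^2\varepsilon^2\kappa^2\right).
\end{equation*}
Specializing to $k=1$ and dividing by $\lambda_2-\lambda_1>0$ forces the right-hand side to be non-negative, producing $\lambda_1\geq(n-1)^2\varepsilon^3\kappa^2/(4\delta^2)$. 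The principal technical difficulty is the final Yang assembly: tracking the weighted Cauchy--Schwarz and the $a_{ij}$ cross-terms so that precisely the coefficient $\tfrac{1}{\varepsilon}$ and a single $(\lambda_{k+1}-\lambda_i)$ factor appear on the right-hand side, without spurious remainders from the orthogonalization.
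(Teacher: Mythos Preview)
Your approach is essentially the same as the paper's. Both use a Yang-type universal inequality with the horospherical function (the paper takes $r=\ln x_n$ on $\mathbb{H}^n(-1)$ and rescales; you take $h=\kappa^{-1}\ln x_n$ directly on $\mathbb{H}^n(-\kappa^2)$), compute $\mathscr{L}h=(1-n)\kappa\psi$ from the radial-constancy hypotheses, and feed the pointwise bound $(\mathscr{L}h)^2\ge(n-1)^2\varepsilon^2\kappa^2$ together with $\int|T\nabla u_i|^2\le\tfrac{\delta^2}{\varepsilon}\lambda_i$ into the quadratic Yang inequality. The paper quotes that inequality as Proposition~\ref{Prop1GM} (itself drawn from Proposition~1 of \cite{GomesMiranda}), whereas you unpack it from scratch via the trial functions $\phi_i=hu_i-\sum_j a_{ij}u_j$; your integration-by-parts identity $\mathrm{div}_\eta(\mathscr{L}h\cdot T\nabla h)=(\mathscr{L}h)^2$ is exactly what the paper records as $\langle T\nabla r,\nabla\mathscr{L}r\rangle=0$ plus the weighted divergence theorem.

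One caution on your write-up: the chain ``Cauchy--Schwarz gives $(\lambda_{k+1}-\lambda_i)^2\|\phi_i\|^2\le\|w_i\|^2$, multiply by $(\lambda_{k+1}-\lambda_i)$ and sum'' does not by itself produce the left-hand side $\sum_i(\lambda_{k+1}-\lambda_i)^2$ with the factor $\tfrac{1}{\varepsilon}$; what one actually needs is the stronger identity-plus-Cauchy step that yields
\[
\sum_{i=1}^k(\lambda_{k+1}-\lambda_i)^2\!\int_\Omega u_i^2\,T(\nabla h,\nabla h)\,e^{-\eta}\le \sum_{i=1}^k(\lambda_{k+1}-\lambda_i)\|w_i\|^2,
\]
after which $T(\nabla h,\nabla h)=\psi\ge\varepsilon$ supplies the $\tfrac{1}{\varepsilon}$. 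You correctly flag this assembly as the delicate point, but your two-line summary of it is not quite how the cancellation works; consult the proof of Proposition~1 in \cite{GomesMiranda} (or any standard Yang-inequality derivation) to make this step precise.
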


Notice that we can take $T$ to be the Identity operator $I$ on $\mathfrak{X}(\mathbb{H}^n)$ in Theorem~\ref{propCYT-c3} to obtain the corresponding universal inequalities for both Laplacian and drifted Laplacian cases. In particular, Theorem~\ref{propCYT-c3} generalizes a result by Cheng and Yang~\cite[Theorem~1.2]{Cheng-Yang-III} obtained for the Laplacian on a bounded domain in a hyperbolic space $\mathbb{H}^n(-1).$

Now, we observe that if $T\partial_r=\psi\partial_r$, then $(\nabla_{\partial_r}T)\partial_r=\nabla_{\partial_r}T\partial_r=\partial_r\psi\partial_r,$ since $\nabla_{\partial_r}\partial_r=0$. Hence, $\psi$ is radially constant if and only if $\nabla_{\partial_r}T\partial_r=0$, i.e., $T\partial_r$ is a parallel ﬁeld for a smooth distance function $r$. We say that $T$ is radially parallel if $\nabla_{\partial r}T$ is null.

For our second result, it is convenient to consider the next constant, which depends on dimension and how the tensor $T$ is bounded on the domain:
\begin{equation}\label{CTE-a}
a(n,\varepsilon,\delta):=-(n-1)^2\varepsilon^2+2(n-1)\delta^2.
\end{equation}

Notice that, for the case of $T=I$, we have $\varepsilon=\delta=1$, then $a(n,\varepsilon,\delta)>0$ for $n=2$, and $a(n,\varepsilon,\delta)\leq0$ for $n\geq3$.
\begin{theorem}\label{thmA3-c3}
Let $\lambda_i$ be the $i$-th eigenvalue of the drifted Cheng-Yau operator with a drifting function $\eta$ on a bounded domain $\Omega\subset M^n$ with the Dirichlet boundary condition. Fix an origin $o\in M^n\backslash\overline{\Omega}$, and let $r(x)$ be the distance function from $o$. If $T$ is radially parallel and it has $\partial_r$ as an eigenvector, then, for any positive integer $k$, we have:
\begin{enumerate}
\item\label{item1-Thm2} For $a(n,\varepsilon,\delta)\leq0$, 
\begin{eqnarray*}
\sum_{i=1}^k(\lambda_{k+1}-\lambda_i)^2 &\leq&\frac{1}{\varepsilon}\sum_{i=1}^k(\lambda_{k+1} -\lambda_i)\Big(\frac{4\delta^2}{\varepsilon}\lambda_i -(n-1)^2\varepsilon^2\kappa^2_2\\
&&+2(n-1)(\delta^2\kappa^2_1-\varepsilon^2\kappa^2_2)+ 2C_0(n-1)\big(\kappa_1+\frac{1}{d}\big)+C_1\Big).
\end{eqnarray*}
\item\label{item2-Thm2} For $a(n,\varepsilon,\delta)>0$, 
\begin{align*}
\sum_{i=1}^k(\lambda_{k+1}-\lambda_i)^2 \leq&\frac{1}{\varepsilon}\sum_{i=1}^k(\lambda_{k+1} -\lambda_i)\Big(\frac{4\delta^2}{\varepsilon}\lambda_i -(n-1)^2\varepsilon^2\kappa^2_2\\
+&2(n-1)(\delta^2\kappa^2_1-\varepsilon^2\kappa^2_2)+ 2C_0(n-1)(\kappa_1+\frac{1}{d})+C_1 + \frac{a(n,\varepsilon,\delta)}{d^2}\Big).
\end{align*}
\end{enumerate}
Here $d=dist(\Omega,o),$ $C_0=\delta^2\max_{\bar{\Omega}}|\dot{\eta}|$ and $C_1=\delta^2 \max_{\bar{\Omega}}(2\ddot{\eta}-\dot{\eta}^2)$, whereas $\dot{\eta}$ and $\ddot{\eta}$ stand for the first and second radial derivatives of $\eta(x)$, respectively.
\end{theorem}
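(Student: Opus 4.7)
The plan is to run a Yang-type (Payne--P\'olya--Weinberger style) argument for universal eigenvalue inequalities, adapted to the drifted Cheng--Yau operator through the geometric and integral tools developed earlier. Since $\mathscr{L}$ is self-adjoint on $\mathcal{H}^1_0(\Omega,e^{-\eta}dvol_\Omega)$, fix an $L^2_\eta$-orthonormal system of eigenfunctions $\{u_i\}$ with eigenvalues $\lambda_i$, write $\langle\cdot,\cdot\rangle_\eta$ and $\|\cdot\|_\eta$ for the weighted inner product and norm, and set $h:=r$, the distance from $o\notin\overline{\Omega}$. Form the trial functions
\begin{equation*}
\phi_i:=hu_i-\sum_{j=1}^{k}a_{ij}u_j,\qquad a_{ij}:=\langle hu_i,u_j\rangle_\eta,
\end{equation*}
which are orthogonal to $u_1,\ldots,u_k$, so that the Rayleigh--Ritz principle yields $\lambda_{k+1}\|\phi_i\|_\eta^2\le-\langle\phi_i,\mathscr{L}\phi_i\rangle_\eta$. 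Applying the product formula $\mathscr{L}(hu_i)=h\mathscr{L}u_i+u_i\mathscr{L}h+2\langle T(\nabla h),\nabla u_i\rangle$, the eigenvalue equation $\mathscr{L}u_i=-\lambda_i u_i$, and orthogonality, and then multiplying by appropriate powers of $\lambda_{k+1}-\lambda_i$ and summing in $i$, leads in the standard way to the recursion
\begin{equation*}
\sum_{i=1}^{k}(\lambda_{k+1}-\lambda_i)^2\|\phi_i\|_\eta^2\le\sum_{i=1}^{k}(\lambda_{k+1}-\lambda_i)\,w_i,
\end{equation*}
where $w_i$ is the commutator integral $\int\bigl[u_i\mathscr{L}r+2\langle T(\nabla r),\nabla u_i\rangle\bigr]\phi_i\,e^{-\eta}$.

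The next step is to estimate $w_i$. Cauchy--Schwarz on the cross term, the pointwise sandwich $\varepsilon|\nabla u_i|^2\le\langle T(\nabla u_i),\nabla u_i\rangle\le\delta|\nabla u_i|^2$, and the identity $\int\langle T(\nabla u_i),\nabla u_i\rangle e^{-\eta}=\lambda_i$ (obtained by integrating by parts against the eigenvalue equation, using self-adjointness of $\mathscr{L}$ in $\mathcal{H}^1_0$) reduce $w_i$ to a linear combination of $\lambda_i/\varepsilon$ and integrals involving the pointwise radial quantities $\mathscr{L}r$ and $\langle\nabla r,T(\nabla r)\rangle=\psi$. The geometric input then feeds in: since $T$ is radially parallel with $T\partial_r=\psi\partial_r$, Proposition~\ref{Lem-LHCThm} supplies the Rauch-type two-sided bounds for $\Box r$ in terms of $\kappa_1,\kappa_2$ and $1/r$, whereas Propositions~\ref{lemmaboxestimate} and~\ref{Prop-C0} absorb the drifting contribution $-\langle\nabla\eta,T(\nabla r)\rangle$ of $\mathscr{L}r$ into the constants $C_0=\delta^2\max_{\overline\Omega}|\dot\eta|$ and $C_1=\delta^2\max_{\overline\Omega}(2\ddot\eta-\dot\eta^2)$. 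Using $r\ge d$ on $\overline{\Omega}$, these bounds recover exactly the expressions $(n-1)^2\varepsilon^2\kappa_2^2$, $2(n-1)(\delta^2\kappa_1^2-\varepsilon^2\kappa_2^2)$, $2C_0(n-1)(\kappa_1+1/d)$, and $C_1$ appearing in the statement.

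The dichotomy between items~(\ref{item1-Thm2}) and~(\ref{item2-Thm2}) arises from the $1/r^2$ term in the Rauch comparison for $\Box r$, whose coefficient is precisely $a(n,\varepsilon,\delta)$. When $a(n,\varepsilon,\delta)\le0$ this term has a favorable sign and may simply be discarded, yielding item~(\ref{item1-Thm2}); when $a(n,\varepsilon,\delta)>0$ it must be retained and bounded via $r\ge d$, producing the extra summand $a(n,\varepsilon,\delta)/d^2$ in item~(\ref{item2-Thm2}). The main obstacle is the careful bookkeeping of the many terms involving $T$ and $\eta$ in the commutator $w_i$. The hypothesis that $T$ is radially parallel with $\partial_r$ as eigenvector is essential here: it kills $\nabla_{\partial_r}(T\partial_r)$ in the Bochner-type formula for $\mathscr{L}$ from \cite{AGD,GomesMiranda}, and forces $\langle\nabla r,T(\nabla r)\rangle=\psi$ to depend only on $r$, so that only the bounds $\varepsilon,\delta$ and the comparison for $\Box r$ enter the estimates, rather than uncontrolled derivatives of $T$ along non-radial directions.
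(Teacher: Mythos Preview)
Your overall strategy matches the paper's: set up a Yang-type commutator inequality with trial functions $\phi_i=ru_i-\sum_ja_{ij}u_j$, then feed in the Rauch and Bochner-type estimates of Propositions~\ref{Lem-LHCThm}, \ref{lemmaboxestimate}, and \ref{Prop-C0}. Your reading of the dichotomy on $a(n,\varepsilon,\delta)$ and of why radial parallelism of $T$ is needed is also correct.

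The gap is in the reduction step. After the Yang argument one must bound $\int_\Omega\{u_i\mathscr{L}r+2T(\partial_r,\nabla u_i)\}^2\,\dm$ (this is the content of the paper's Proposition~\ref{Prop1GM}, which packages the commutator algebra you sketch). You write that ``Cauchy--Schwarz on the cross term'' reduces this to integrals of $\mathscr{L}r$ and $\psi$; but Cauchy--Schwarz on $4u_i\mathscr{L}r\,T(\partial_r,\nabla u_i)$ would leave $(\mathscr{L}r)^2$ with a \emph{positive} coefficient and would never produce a $\nabla\mathscr{L}r$ term. The paper instead integrates the cross term by parts: writing
\[
4u_i\,\mathscr{L}r\,T(\partial_r,\nabla u_i)=2T\big(\partial_r,\nabla(u_i^2\mathscr{L}r)\big)-2u_i^2\,T(\partial_r,\nabla\mathscr{L}r)
\]
and applying the weighted divergence theorem turns $\int u_i^2(\mathscr{L}r)^2+(\text{cross})\,\dm$ into $-\int u_i^2\big[(\mathscr{L}r)^2+2\langle T\partial_r,\nabla\mathscr{L}r\rangle\big]\,\dm$. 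This sign flip is essential: it is what yields the negative leading term $-(n-1)^2\varepsilon^2\kappa_2^2$ in the statement, and it is exactly the expression $-(\Box r)^2-2\langle\nabla\Box r,T\partial_r\rangle$ on which Proposition~\ref{lemmaboxestimate} acts (after splitting off the drift via $\mathscr{L}r=\Box r-\langle T\partial_r,\nabla\eta\rangle$, which then also generates the terms handled by Proposition~\ref{Prop-C0} and the $C_1$ bound). Without this integration by parts, Propositions~\ref{lemmaboxestimate} and~\ref{Prop-C0} have nothing to bite on and the inequalities of the theorem cannot be recovered. Replace ``Cauchy--Schwarz on the cross term'' by this integration-by-parts identity and your outline becomes complete and coincides with the paper's proof.
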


Since $(M^n,\langle,\rangle)$ is a geodesically complete Riemannian manifold, there exists a sequence of compact subsets $K_j\subset M^n$ with $K_j\subset int{K_{j+1}}$ and $\bigcup_jK_j=M^n$ such that if $q_j\not\in K_j$, then $r(x,q_j)\rightarrow \infty$. As $\Omega$ is bounded, we can assume $\Omega\subset K_{j}$ for some $j$, so that we can define $r(x)$ on $\Omega$ from $o=q_j\in M^n\backslash K_j$, for large enough $j,$ such that both $a(n,\varepsilon,\delta)/d^2$ and $1/d$ are small enough. Moreover, we can take $C_0=\delta^2\max_{\bar{\Omega}}|\nabla\eta|$ and $C_1=\delta^2\max_{\bar{\Omega}}(2|\nabla^2\eta|+|\nabla\eta|^2)$, which do not depend on the distance function. In this context, the most convenient is to take $T=I$ in Theorem~\ref{thmA3-c3} to get the next result.

\begin{corollary}\label{thmA3-c3-1}
Let $\lambda_i$ be the $i$-th eigenvalue of the drifted Laplacian with a drifting function $\eta$ on a bounded domain $\Omega\subset M^n$ with the Dirichlet boundary condition. For any positive integer $k$, we have
\begin{eqnarray*}
\sum_{i=1}^k(\lambda_{k+1}-\lambda_i)^2 &\leq&\sum_{i=1}^k(\lambda_{k+1} -\lambda_i)\Big(4\lambda_i -(n-1)^2\kappa^2_2+2(n-1)(\kappa^2_1-\kappa^2_2)\\
&&+ 2C_0(n-1)\kappa_1+C_1\Big),
\end{eqnarray*}
where $C_0=\max_{\bar{\Omega}}|\nabla\eta|$ and $C_1=\max_{\bar{\Omega}}(2|\nabla^2\eta|+|\nabla\eta|^2).$ Moreover, the first eigenvalue satisfies
\begin{eqnarray*}
\lambda_1(\Omega)\geq\frac{1}{4}\big((n-1)^2\kappa^2_2-2(n-1)(\kappa^2_1-\kappa^2_2) - 2C_0(n-1)\kappa_1 - C_1\big).
\end{eqnarray*}
\end{corollary}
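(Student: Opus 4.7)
The plan is to derive Corollary~\ref{thmA3-c3-1} as a direct specialization of Theorem~\ref{thmA3-c3}, taking $T$ to be the identity operator on $\mathfrak{X}(M^n)$ and letting the base point $o$ recede to infinity along the exhaustion $\{K_j\}$ described just above the corollary. With $T=I$ the operator $\mathscr{L}$ is exactly the drifted Laplacian, $\varepsilon=\delta=1$, and both hypotheses on $T$ (radially parallel, with $\partial_r$ an eigenvector) hold automatically. Moreover $a(n,1,1)=-(n-1)^2+2(n-1)=-(n-1)(n-3)$, which is $\leq 0$ for $n\geq 3$ (so case~(1) of the theorem applies) and equal to $1>0$ for $n=2$ (so case~(2) applies, producing the extra term $1/d^2$).

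Before sending $d\to\infty$ I would first replace the radial-derivative constants $C_0=\max_{\bar\Omega}|\dot\eta|$ and $C_1=\max_{\bar\Omega}(2\ddot\eta-\dot\eta^2)$ of the theorem by their intrinsic counterparts. Because $\partial_r$ is a unit vector and $\nabla_{\partial_r}\partial_r=0$ along radial geodesics, one has $\dot\eta=\langle\nabla\eta,\partial_r\rangle$ and $\ddot\eta=\nabla^2\eta(\partial_r,\partial_r)$, whence $|\dot\eta|\leq|\nabla\eta|$ and $2\ddot\eta-\dot\eta^2\leq 2|\nabla^2\eta|+|\nabla\eta|^2$; crucially, the enlarged constants no longer depend on the choice of origin. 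Then I would pick a sequence $o_j\in M^n\setminus K_j$ with $d_j=\mathrm{dist}(\Omega,o_j)\to\infty$, apply Theorem~\ref{thmA3-c3} with $o=o_j$, and let $j\to\infty$: for $n\geq 3$ the vanishing error contribution is $2C_0(n-1)/d_j$, while for $n=2$ it additionally includes the term $1/d_j^2$. In the limit one recovers precisely the universal inequality of the corollary.

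Finally, the lower bound on $\lambda_1$ follows by setting $k=1$. Writing $A:=(n-1)^2\kappa_2^2-2(n-1)(\kappa_1^2-\kappa_2^2)-2C_0(n-1)\kappa_1-C_1$ for the constant in the parentheses, the universal inequality becomes $(\lambda_2-\lambda_1)^2\leq(\lambda_2-\lambda_1)(4\lambda_1-A)$. Since the first Dirichlet eigenvalue is simple, $\lambda_2-\lambda_1>0$, so dividing and using nonnegativity of the left-hand side yields $4\lambda_1\geq A$, i.e., $\lambda_1\geq A/4$, which is the claimed estimate. The only real point to verify is that the two branches of Theorem~\ref{thmA3-c3} collapse consistently under the limit $d\to\infty$; once that is in hand the remainder is routine bookkeeping.
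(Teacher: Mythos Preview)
Your proposal is correct and follows essentially the same route as the paper: set $T=I$ so that $\varepsilon=\delta=1$, enlarge the radial-derivative constants to their intrinsic bounds $|\dot\eta|\leq|\nabla\eta|$ and $2\ddot\eta-\dot\eta^2\leq 2|\nabla^2\eta|+|\nabla\eta|^2$ (which are independent of the origin), and then push $o$ to infinity along the exhaustion so that the $1/d$ and $a(n,1,1)/d^2$ terms disappear. This is exactly what the paper sketches in the paragraph preceding the corollary.

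The only small difference is in the last step. For the lower bound on $\lambda_1$ you take $k=1$ in the universal inequality and divide by $\lambda_2-\lambda_1>0$, invoking simplicity of the first Dirichlet eigenvalue. The paper instead (see Corollary~\ref{Cor1ConcRem}) goes back to inequality~\eqref{Eq1-Prop1GM} and notes that $\|u_i\mathscr{L}r+2T(\partial_r,\nabla u_i)\|_{L^2}^2\geq 0$ forces $\upsilon_i\geq 0$ for every $i$, in particular $i=1$. Both arguments are valid; the paper's avoids appealing to simplicity of $\lambda_1$, while yours stays entirely at the level of the final inequality.
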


In Corollary~\ref{Cor1ConcRem} we prove the more general case of the previous corollary that we can obtain from Theorem~\ref{thmA3-c3}.

Taking $\eta$ to be constant in the first part of Corollary~\ref{thmA3-c3-1}, we immediately recover the result by Chen, Zheng and Lu~\cite[Theorem~1.1]{CHEN-TAO ZHENG} obtained for the Laplacian on a bounded domain in a pinched Cartan-Hadamard manifold $M^n$, for $n\geq3.$ Observe that dimension~2 has not been an obstacle for us. The second consequence from this previous corollary, again for the Laplacian, we write in Corollary~\ref{Cor-McKean} for convenience.  

\begin{remark}\label{Remark-ODE}
For the Cheng-Yau operator case, the inequalities in Theorem~\ref{thmA3-c3} do not depend on the constants $C_0$ and $C_1$, since we can take $\eta$ to be constant. In the case of the drifted Cheng-Yau operator with the radial drifting function given by $\eta(x)=-2\ln(r(x))$, these inequalities do not depend on the constant $C_1$, since $\eta$ solves $2\ddot{\eta}-\dot{\eta}^2=0$. Besides, $-\eta$ is a simple example of drifting function that we can take in Corollary 1 so that the estimate to be obtained do dot depend on $C_1$.
\end{remark}

McKean~\cite{McKean} proved that for a Cartan-Hadamard manifold $M^n$ of sectional curvatures satisfying $K\leq-\kappa^2$, for some positive constant $\kappa^2$, the spectrum of the Laplacian on $M^n$ lies in $\big[(n-1)^2\kappa^2/4,+\infty\big)$, and this lower bound is sharp on the hyperbolic space $\mathbb{H}^2(-\kappa^2)$, see also Cheng~\cite{S.Y.Cheng-I}. In Pinsk~\cite{PinskyI}, we found more accurate bounds for this lower bound in the case of surfaces. As an application of our Theorem~\ref{propCYT-c3}, we immediately obtain that the first eigenvalue of the Laplacian on $\Omega\subset\mathbb{H}^n(-\kappa^2)$ with the Dirichlet boundary condition satisfies $\lambda_1(\Omega)\geq (n-1)^2\kappa^2/4$.
In the  case of bounded domains in pinched Cartan-Hadamard manifolds, we ask the following question: How sharp is this lower bound? Here, as an application of Corollary~\ref{thmA3-c3-1}, we immediately obtain a universal lower bound for the first eigenvalue of the Laplacian on bounded domains in a pinched Cartan-Hadamard manifold. More precisely:

\begin{corollary}\label{Cor-McKean}
The first eigenvalue $\lambda_1$ of the Laplacian on a bounded domain $\Omega\subset M^n$ with the Dirichlet boundary condition satisfies
\begin{eqnarray*}
\lambda_1(\Omega)\geq \frac{(n-1)}{4}^2\kappa^2_2 - \frac{n-1}{2}(\kappa^2_1-\kappa^2_2).
\end{eqnarray*}
Moreover, this lower bound is positive for $0 < \kappa_2\leq \kappa_1$ and $\sqrt{n+1}\kappa_2>\sqrt{2}\kappa_1.$
\end{corollary}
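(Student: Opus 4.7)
The plan is to derive the corollary by directly specializing Corollary~\ref{thmA3-c3-1} to the case where the drifting function $\eta$ is constant, so that the drifted Laplacian coincides with the ordinary Laplacian. Since
\[
C_0=\max_{\bar{\Omega}}|\nabla\eta|\qquad\text{and}\qquad C_1=\max_{\bar{\Omega}}\bigl(2|\nabla^{2}\eta|+|\nabla\eta|^{2}\bigr),
\]
both constants vanish for constant $\eta$, so the lower bound from Corollary~\ref{thmA3-c3-1} collapses to
\[
\lambda_1(\Omega)\ \geq\ \tfrac{1}{4}\bigl((n-1)^2\kappa_2^2-2(n-1)(\kappa_1^2-\kappa_2^2)\bigr)=\tfrac{(n-1)^2}{4}\kappa_2^2-\tfrac{n-1}{2}(\kappa_1^2-\kappa_2^2),
\]
which is precisely the claimed inequality. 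No additional geometric input is needed because the hard analytic work (the Bochner-type estimate, the Rauch-type comparison for $\Box$, and the integral estimates of Propositions~\ref{Lem-LHCThm}, \ref{lemmaboxestimate}, \ref{Prop-C0}) is already encoded in Theorem~\ref{thmA3-c3} and its corollary.

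For the positivity assertion, I would simply solve the resulting inequality
\[
\tfrac{(n-1)^2}{4}\kappa_2^2\ >\ \tfrac{n-1}{2}(\kappa_1^2-\kappa_2^2).
\]
Dividing through by $(n-1)/2>0$ (valid since $n\geq 2$) yields $(n-1)\kappa_2^2/2>\kappa_1^2-\kappa_2^2$, equivalently $(n+1)\kappa_2^2>2\kappa_1^2$, which is the stated condition $\sqrt{n+1}\,\kappa_2>\sqrt{2}\,\kappa_1$. The hypothesis $0<\kappa_2\leq\kappa_1$ is of course consistent with this (for instance, the borderline $\kappa_1=\kappa_2=\kappa$ gives $(n+1)\kappa^2>2\kappa^2$, which holds for all $n\geq 2$, and recovers McKean's sharp bound $(n-1)^2\kappa^2/4$).

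There is essentially no obstacle in this argument: the corollary is a transparent specialization, and the positivity condition is a one-line algebraic manipulation. The only point that deserves a brief word in the written-up proof is to emphasize that the independence of the lower bound on the choice of origin $o\in M^{n}\setminus\overline{\Omega}$ and on the distance $d=\mathrm{dist}(\Omega,o)$ is precisely what permits passing to the limit $d\to\infty$ in the remark preceding Corollary~\ref{thmA3-c3-1}, thereby eliminating the $a(n,\varepsilon,\delta)/d^{2}$ and $1/d$ terms that appear in Theorem~\ref{thmA3-c3}.
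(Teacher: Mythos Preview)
Your proposal is correct and mirrors the paper's own derivation: the paper obtains Corollary~\ref{Cor-McKean} as an immediate application of Corollary~\ref{thmA3-c3-1} with $\eta$ constant (so $C_0=C_1=0$), and the positivity condition is the same one-line algebra you carry out.
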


Now, let us consider each eigenvalue $\lambda_i(\Omega)$ of the Laplacian on bounded domains $\Omega\subset\mathbb{H}^n(-\kappa^2)$ as a function of these domains with the Dirichlet boundary condition. In this setting, we show that $\lambda_i(\Omega)\to(n-1)^2\kappa^2/4$ as $\Omega\to\mathbb{H}^n(-\kappa^2)$. This means that $\Omega$ includes an $n$-disk of radius $a>0$ and we can make the radius $a\to+\infty$, since $\mathbb{H}^n(-\kappa^2)$ is a geodesically complete Riemannian manifold. We observe that this fact has been proved by Cheng and Yang~\cite{Cheng-Yang-III} for the unit hyperbolic space. Here, we give a complete proof by combining our Theorem~\ref{propCYT-c3} with an appropriated approach by the second author and Marrocos~\cite{Marrocos-Gomes} in the setting of the spectrum of warped metrics.
\begin{corollary}\label{newAproach}
The $i$-th eigenvalue $\lambda_i$ of the Laplacian on a bounded domain $\Omega\subset \mathbb{H}^n(-\kappa^2)$ with the Dirichlet boundary condition satisfies
\begin{equation*}
\lim_{\Omega\to\mathbb{H}^n(-\kappa^2)}\lambda_i(\Omega)=\frac{(n-1)^2}{4}\kappa^2.
\end{equation*}
\end{corollary}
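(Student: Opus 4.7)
My plan is to reduce the corollary, via the sharp universal inequality of Theorem~\ref{propCYT-c3}, to the statement $\lambda_1(\Omega)\to(n-1)^2\kappa^2/4$ as $\Omega\to\mathbb{H}^n(-\kappa^2)$, and then establish that limit by an explicit Rayleigh-quotient computation on large geodesic balls. Specializing Theorem~\ref{propCYT-c3} to $T=I$ and constant drifting function $\eta$ yields the McKean-type lower bound $\lambda_i(\Omega)\ge (n-1)^2\kappa^2/4$, so the shifted eigenvalues $\mu_i:=\lambda_i(\Omega)-(n-1)^2\kappa^2/4$ are nonnegative and satisfy
\[
\sum_{i=1}^k(\mu_{k+1}-\mu_i)^2\leq 4\sum_{i=1}^k(\mu_{k+1}-\mu_i)\mu_i.
\]
Treating this as a quadratic inequality in $\mu_{k+1}$ and using $\sum_i\mu_i^2\ge(\sum_i\mu_i)^2/k$ together with $\sum_i\mu_i\le k\mu_k$, I will extract $\mu_{k+1}\le 5\mu_k$, hence $\mu_i(\Omega)\le 5^{i-1}\mu_1(\Omega)$ for every $i$. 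Consequently it suffices to show $\mu_1(\Omega)\to 0$.

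By the variational characterization of $\lambda_1$ and domain monotonicity of the Dirichlet spectrum, this reduces in turn to exhibiting, for geodesic balls $B_a\subset\mathbb{H}^n(-\kappa^2)$, a test function $u_a\in H^1_0(B_a)$ whose Rayleigh quotient tends to $(n-1)^2\kappa^2/4$ as $a\to\infty$. In geodesic polar coordinates the metric reads $dr^2+h(r)^2 d\sigma^2$ with $h(r)=\sinh(\kappa r)/\kappa$, and I will take
\[
u_a(r)=e^{-(n-1)\kappa r/2}\sin(\pi r/a),
\]
combining the asymptotic radial eigenfunction $e^{-(n-1)\kappa r/2}$ with a slowly varying cutoff. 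Integrating by parts against $h(r)^{n-1}dr\,d\sigma$ and using the identity $h'(r)-\kappa h(r)=e^{-\kappa r}$, the Rayleigh quotient decomposes as $(n-1)^2\kappa^2/4$ plus a kinetic contribution of order $1/a^2$ produced by $(\sin(\pi r/a))'$ and an error term weighted by $1/(e^{2\kappa r}-1)$. This same computation can be packaged via the radial ODE of the warped-metric Laplacian, following the approach of Marrocos and the second author in~\cite{Marrocos-Gomes}.

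The main obstacle will be controlling the error and boundary contributions uniformly in $a$: near $r=0$ the weight $e^{-(n-1)\kappa r}h(r)^{n-1}$ behaves like $r^{n-1}$, so the asymptotic relation $h^{n-1}\sim e^{(n-1)\kappa r}/(2\kappa)^{n-1}$ fails there. I will handle this by bounding the small-$r$ portion of each integral by a fixed constant independent of $a$, and then verifying that the denominator $\int_0^a\sin^2(\pi r/a)\,e^{-(n-1)\kappa r}h(r)^{n-1}\,dr$ grows linearly in $a$, which is enough to absorb all those bounded contributions. Once $\mu_1(B_a)\to 0$ is secured, the bound $\mu_i(\Omega)\le 5^{i-1}\mu_1(\Omega)$, combined with $\mu_i(\Omega)\le \mu_i(B_a)$ for $B_a\subset\Omega$, concludes the proof.
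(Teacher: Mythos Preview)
Your proposal is correct and follows essentially the same strategy as the paper: reduce to $\mu_1\to 0$ via the universal inequality of Theorem~\ref{propCYT-c3}, and then establish $\mu_1(B_a)\to 0$ by a Rayleigh-quotient computation with a test function built from the asymptotic radial eigenfunction $e^{-(n-1)\kappa r/2}$. The only noteworthy differences are technical: the paper invokes the Cheng--Yang recursion (Theorem~\ref{StandarEstimate-CYO}, estimate~\eqref{2-Aux-Num}) to get the polynomial bound $\upsilon_{i+1}\le 5i^2\upsilon_1$, whereas you extract the weaker but self-contained exponential bound $\mu_{k+1}\le 5\mu_k$ directly from the quadratic inequality; and in Lemma~\ref{Ball-lemma} the paper takes its test function to be a solution of the constant-coefficient ODE $\ddot\phi+(n-1)\dot\phi+\tfrac{(n-1)^2}{4}\phi=0$ supported on the annulus $(a/2,a)$, controlling the error by $|\coth(a/2)-1|$, while you work on the full ball with the explicit product $e^{-(n-1)\kappa r/2}\sin(\pi r/a)$ and bound the error via the integrable weight $1/(e^{2\kappa r}-1)$. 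Both variants achieve the same conclusion with comparable effort.
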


We identify the inequalities in Theorem~\ref{thmA3-c3} as the most appropriate tool for the applications of our results. Note that the appearance of the constants $C_0$ and $C_1$ are natural and motivate us to ask the following question:

\emph{Under which conditions the estimates for the eigenvalues obtained from Theorem~\ref{thmA3-c3} do not depend on the constants $C_0$ and $C_1$ for a nontrivial drifting function $\eta$?}

We give an immediate answer to this question by considering a radially constant drifting function on any bounded domain $\Omega\subset M^n$. In other words, the next result shows that the behavior of estimates of the eigenvalues of the Cheng-Yau operator remains invariant by a particular ﬁrst-order perturbation of this operator. Hence, we get a rigidity result for a Cheng-Yau operator on the class of radially constant drifting functions defined on any bounded domains in $M^n$. More precisely, we have:
\begin{corollary}\label{Rig1}
Let $\lambda_i$ be the $i$-th eigenvalue of the drifted Cheng-Yau operator with a radially constant drifting function on a bounded domain $\Omega\subset M^n$ with the Dirichlet boundary condition. Fix an origin $o\in M^n\backslash\overline{\Omega}$, and let $r(x)$ be the distance function from $o$. If $T$ is radially parallel and it has $\partial_r$ as an eigenvector, then, all estimates for the sequence of eigenvalues $(\lambda_i)$ that we can obtain from Theorem~\ref{thmA3-c3} do not depend on this drifting function.
\end{corollary}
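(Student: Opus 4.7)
The plan is to show that, under the radially constant hypothesis on $\eta$, both of the drifting-function-dependent constants $C_0$ and $C_1$ appearing in Theorem~\ref{thmA3-c3} vanish identically; this renders the two inequalities in that theorem entirely free of $\eta$, which is exactly the rigidity being asserted.

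First, I would unpack the definition. A smooth function $\eta$ is called \emph{radially constant} in the paper precisely when its radial derivative $\partial_r\eta$ vanishes. In the notation of Theorem~\ref{thmA3-c3}, the first and second radial derivatives are written as $\dot\eta(x)=(\partial_r\eta)(x)$ and $\ddot\eta(x)=\partial_r\dot\eta(x)$. Hence the hypothesis forces $\dot\eta\equiv 0$ on $\overline\Omega$, and therefore also $\ddot\eta\equiv 0$ there.

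Substituting these identities into the defining formulas
$$C_0=\delta^2\max_{\bar\Omega}|\dot\eta|,\qquad C_1=\delta^2\max_{\bar\Omega}\bigl(2\ddot\eta-\dot\eta^{\,2}\bigr),$$
one immediately obtains $C_0=0$ and $C_1=0$. Plugging these values into items~(\ref{item1-Thm2}) and~(\ref{item2-Thm2}) of Theorem~\ref{thmA3-c3} eliminates the terms $2C_0(n-1)(\kappa_1+1/d)$ and $C_1$, so the surviving upper bounds involve only $n,\varepsilon,\delta,\kappa_1,\kappa_2$, and, in item~(\ref{item2-Thm2}), the distance $d=\mathrm{dist}(\Omega,o)$; none of these involves $\eta$.

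Since every estimate for the sequence $(\lambda_i)$ that one can derive from Theorem~\ref{thmA3-c3} is a consequence of these two inequalities, all such estimates are independent of the drifting function, which is the rigidity claimed. There is essentially no obstacle to overcome: the content of the corollary is the structural observation that the Bochner-type machinery underlying Theorem~\ref{thmA3-c3} sees the drifting function only through its radial derivatives, so annihilating the radial behaviour of $\eta$ removes $\eta$ completely from the picture.
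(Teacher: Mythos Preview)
Your argument is correct and is exactly the reasoning the paper has in mind: the corollary is presented there as an ``immediate answer,'' relying on the observation that $\dot\eta\equiv0$ (hence $\ddot\eta\equiv0$) forces $C_0=C_1=0$ in Theorem~\ref{thmA3-c3}. No further work is needed.
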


In the case of radial drifting functions, we again fix an origin $o\in M^n$ to define the distance function $r(x)$ from $o$, and let us consider the bounded annular domain 
\begin{equation}\label{Omega}
\Omega=\Big\{x\in M^n; \frac{2(n-1)(\kappa_1+\alpha) R +2}{c} < r(x)^2 < R^2 \Big\},
\end{equation}
where $\alpha=1/d$, $cR>(n-1)(\kappa_1+\alpha)+\sqrt{(n-1)^2(\kappa_1+\alpha)^2+2c}$ and $c$ are positive constants. To obtain such a constant $R$, it is enough to consider the quadratic polynomial $\mathscr{P}(x)= cx^2-2(n-1)(\kappa_1+\alpha)x-2.$ So, for the radial drifting function $\eta(x)=\frac{c}{2}r(x)^2$ on $\Omega$, we have $C_0=\delta^2cR$ and
\begin{equation*}
\max_{\bar\Omega}\big(2c - c^2 r(x)^2 \big) = 2c - c^2 \min_{\bar{\Omega}}r(x)^2=2c-(2c(n-1)(\kappa_1+\alpha) R+2c).
\end{equation*}
Thus, it is null the expression $2C_0(n-1)(\kappa_1+\frac{1}{d})+C_1$ that appear in Theorem~\ref{thmA3-c3}. 

Hence, as in Corollary~\ref{Rig1}, we get a rigidity result for a Cheng-Yau operator on the class of bounded annular domains defined as above. More precisely, we immediately obtain:

\begin{corollary}\label{InvCor}
Let $\lambda_i$ be the $i$-th eigenvalue of the drifted Cheng-Yau operator with the drifting function $\eta(x)=\frac{c}{2}r(x)^2$ on the bounded annular domain defined as in Eq.~\eqref{Omega} with the Dirichlet boundary condition. Fix an origin $o\in M^n\backslash\overline{\Omega}$, and let $r(x)$ be the distance function from $o$. If $T$ is radially parallel and it has $\partial_r$ as an eigenvector, then, all estimates for the sequence of eigenvalues $(\lambda_i)$ that we can obtain from Theorem~\ref{thmA3-c3} do not depend on the constants $C_0$ and $C_1$.
\end{corollary}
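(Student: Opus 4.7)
The plan is to read the constants $C_0$ and $C_1$ from Theorem~\ref{thmA3-c3} for the radial drifting function $\eta(x)=\tfrac{c}{2}r(x)^2$ and verify that the combined term $2C_0(n-1)(\kappa_1+\tfrac{1}{d})+C_1$ in the inequalities of that theorem vanishes identically on $\bar{\Omega}$. Since $T$ is radially parallel with $\partial_r$ as an eigenvector, Theorem~\ref{thmA3-c3} applies to this setting, so the whole task reduces to an algebraic bookkeeping of those two constants.

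First I would compute the radial derivatives $\dot{\eta}=cr$ and $\ddot{\eta}=c$. On $\bar{\Omega}$ the function $|\dot{\eta}|=cr$ is monotone in $r$, so it attains its maximum at the outer boundary $r=R$, giving $C_0=\delta^2 cR$. Likewise, $2\ddot{\eta}-\dot{\eta}^2=2c-c^2r^2$ is decreasing in $r$, and so reaches its maximum on $\bar{\Omega}$ at the inner boundary, where $r^2=\frac{2(n-1)(\kappa_1+\alpha)R+2}{c}$ with $\alpha=1/d$. A direct substitution yields
\begin{equation*}
C_1=\delta^2\Big(2c-c^2\cdot\tfrac{2(n-1)(\kappa_1+\alpha)R+2}{c}\Big)=-2c\delta^2(n-1)(\kappa_1+\alpha)R.
\end{equation*}
Before substituting into the theorem, I would record that the annular domain~\eqref{Omega} is nonempty: this amounts to $cR^2-2(n-1)(\kappa_1+\alpha)R-2>0$, i.e., $R$ lies beyond the larger root of the quadratic $\mathscr{P}(x)=cx^2-2(n-1)(\kappa_1+\alpha)x-2$, which is exactly the hypothesis $cR>(n-1)(\kappa_1+\alpha)+\sqrt{(n-1)^2(\kappa_1+\alpha)^2+2c}$ imposed in the statement.

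Plugging the above expressions into the parenthetical term appearing in Theorem~\ref{thmA3-c3}, I obtain
\begin{equation*}
2C_0(n-1)\big(\kappa_1+\tfrac{1}{d}\big)+C_1=2\delta^2 cR(n-1)(\kappa_1+\alpha)-2c\delta^2(n-1)(\kappa_1+\alpha)R=0.
\end{equation*}
Consequently, the inequalities in items~\eqref{item1-Thm2} and~\eqref{item2-Thm2} of Theorem~\ref{thmA3-c3} reduce to expressions that depend only on $\lambda_i$, $n$, $\varepsilon$, $\delta$, $\kappa_1$, $\kappa_2$, $d$ and $a(n,\varepsilon,\delta)$, with no occurrence of $C_0$ or $C_1$, as claimed. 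The only step requiring care is the identification of where the extrema of $\dot{\eta}$ and $2\ddot{\eta}-\dot{\eta}^2$ are attained on $\bar{\Omega}$; since both are monotone in $r$, this is immediate and no genuine obstacle arises.
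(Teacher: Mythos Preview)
Your proof is correct and follows essentially the same approach as the paper: compute $\dot\eta=cr$, $\ddot\eta=c$, read off $C_0=\delta^2cR$ and $C_1=-2c\delta^2(n-1)(\kappa_1+\alpha)R$ from the extrema on $\bar\Omega$, and check that the combination $2C_0(n-1)(\kappa_1+\tfrac{1}{d})+C_1$ vanishes. The paper carries out exactly this computation in the paragraph preceding the corollary and then states the result as immediate.
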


To obtain an application of Corollary~\ref{InvCor}, the reader can consider the drifted Cheng-Yau operator with the drifting function $\eta(x)=\frac{\lambda}{2}|x|^2$ on bounded annular domains defined as in \eqref{Omega} in Gaussian shrinking soliton $(\mathbb{R}^n,\delta_{ij}, \frac{\lambda}{2}|x|^2)$. 

In Section~\ref{secaogap}, we address specifically the fundamental gap conjecture. There we are working on the behavior of the fundamental gap for a particular case of the operator $\Lu$ defined as in \eqref{defLintro} on convex bounded domains in hyperbolic space $\mathbb{H}^2(-1)$. We prove that the fundamental gap for the operator $\dv(\varphi\nabla u)$, with $\varepsilon\leq\varphi\leq\delta$ and $\varphi_r=0$, on each set of a special family of convex domains in $\mathbb{H}^2(-1)$, it satisfies $(\lambda_2-\lambda_1)D^2<3\pi^2\delta,$ where $D$ stands for the diameter of each domain of this family, see Theorem~\ref{teo1}. Observe that the quantity $(\lambda_2-\lambda_1)D^2$ is invariant under the scaling of the metric, then, this same result also holds for any simply connected negative constant curvature space forms. 

\section{Preliminaries}\label{Sec-L}
In this section, we fix notation, comments about facts that will be used in our proofs, and list without proof all the main formulas which will be appropriated for us. The material we summarize here is known, Gomes and Miranda~\cite{GomesMiranda} is a reference for it.

Let $(M^n,\langle,\rangle)$ be an $n$-dimensional Riemannian manifold and $T$ be a $(0,2)$--tensor on $M^n$. Throughout the paper, we will be constantly using the identification of $T$ with its associated $(1,1)$--tensor by the equation
\begin{equation*}\label{AAAC-c3}
\langle T(X),Y\rangle=T(X,Y).
\end{equation*}
In particular, the metric tensor $\langle,\rangle$ will be identified with the identity $I$ in $\mathfrak{X}(M).$

Let $\{e_1,\ldots,e_n\}$ be an orthonormal basis in $T_pM$, and $S$ be a $(1,1)$--tensor with adjoint $S^*$. Recall that the {\it Hilbert-Schmidt inner product} is defined as
\begin{equation*}
\langle T,S\rangle:=\mathrm{tr}(TS^*) = \sum_{i=1}^{n}\langle T(e_i),S(e_i)\rangle.
\end{equation*}

The divergence of a $(1,1)$--tensor $T$ is defined as the $(0,1)$--tensor
\begin{equation*}
(\dv T)(v)(p)=\mathrm{tr}(w\mapsto(\nabla_{w}T)(v)(p)),
\end{equation*}
where $p\in M^n$, $v,w\in T_{p}M,$ $\nabla$ stands for the covariant derivative of $T$ and $\mathrm{tr}$ is the trace operator calculated in the metric $\langle,\rangle.$ Note that we can use the identification $(\dv T)(v)=\langle\dv T,v\rangle.$

In this paper, the manifold $(M^n,\langle,\rangle)$ is assumed to be complete and $\Omega\subset M^n$ a bounded domain assumed to be connected and with smooth boundary $\partial\Omega$. We are using the weighted measure $\dm=e^{-\eta}dvol_\Omega$, for some smooth function $\eta$. If $\nu$ is the outward normal vector field on the boundary $\partial\Omega$, then the divergence theorem remains true in the form
\begin{equation*}
\int_{\Omega}\mathscr{L}f\dm=\int_{\partial\Omega}T(\nabla f,\nu)\dn,
\end{equation*}
where $\dn=e^{-\eta}dvol_{\partial\Omega}$ is the weighted measure on the boundary induced by $\nu$. Thus, the ``integration by parts'' formula is
\begin{equation*}
\int_{\Omega}\ell\mathscr{L}f\dm=-\int_{\Omega}T(\nabla\ell,\nabla f)\dm+\int_{\partial\Omega}\ell T(\nabla f,\nu)\dn,
\end{equation*}
from which we conclude that $\mathscr{L}$ is a self-adjoint operator in the Hilbert space $\mathcal{H}_0^1(\Omega,\dm)$. Thus, the Dirichlet eigenvalue problem
\begin{equation}\label{PD-c3}
\left\{
\begin{array}{ccccc}
-\mathscr{L}u &=& \lambda u & \hbox{in} & \Omega,\\
u&=&0 & \hbox{on} & \partial\Omega
\end{array}
\right.
\end{equation}
has a real and discrete spectrum $0<\lambda_{1}\leq\lambda_{2}\leq\lambda_3\leq\cdots\to +\infty$, where each $\lambda_i$
is repeated according to its multiplicity. Eigenspaces belonging to distinct eigenvalues are orthogonal in $L^2(\Omega,\dm)$, which is the direct sum of all the eigenspaces. We refer to the dimension of each eigenspace as the multiplicity of the eigenvalue. Moreover, for any eigenfunction $u_i$ we have
\begin{eqnarray*}
\lambda_i=-\int_{\Omega}u_i\mathscr{L}u_i\dm=\int_{\Omega}T(\nabla
u_i,\nabla u_i)\dm.
\end{eqnarray*}

The \emph{Bochner type formula for the more general expression of $\mathscr{L}$} is given by
\begin{equation}\label{BF}
\frac{1}{2}\mathscr{L}(|\nabla f|^2)=\langle\nabla(\mathscr{L}f),\nabla f\rangle+ R_{\eta,T}(\nabla f,\nabla f) +\langle\nabla^2f,\nabla^2f\circ T\rangle-\langle\nabla^2f,\nabla_{\nabla f}T\rangle
\end{equation}
where $R_T(X,Y)=\mathrm{tr}(Z\mapsto T\circ R(X,Z)Y)$, $R_{\eta,T}:=R_T-\nabla(\dv_\eta T)^\sharp$ and $R(X,Z)Y$  is the curvature tensor of the Riemannian metric $\langle,\rangle$ on $M^n$. Here, the map $(\cdot)^\sharp$ stands for the  musical isomorphism.

In particular, by taking $\eta$ to be constant and $T$ divergence free in \eqref{BF}, we get the \emph{Bochner type formula for the Cheng-Yau operator} as follows
\begin{equation}\label{BFCYOp}
\frac{1}{2}\square (|\nabla f|^2)=\g{\nabla\square f}{\nabla f}+R_T(\nabla f,\nabla f)+\g{\nabla^2f}{\nabla^2f\circ T}-\g{\nabla^2f}{\nabla_{\nabla f}T}.
\end{equation}

Two special cases of divergence free positive definite symmetric tensors on $\big(\mathbb{H}^n(-1),\langle,\rangle\big)$ are $T=-Ric$ and $T=-S$, where $S=\frac{1}{n-2}\Big(Ric-\frac{R}{2(n-1)}\langle,\rangle\Big)$ is the Schouten tensor of $\langle,\rangle$, for $n\geq3$. More generally, these special tensors can be considered on pinched Cartan-Hadamard Einstein manifolds $(M^n,\langle,\rangle)$, since $Ric=\frac{R}{n}\langle,\rangle$ and by Schur's lemma the scalar curvature of Einstein manifolds of dimension $n\geq 3$ must be constant. We highlight that geodesically complete noncompact Einstein manifolds with $Ric=-(n-1)\langle,\rangle$ have a very special behavior at infinity, see Gicquaud, Ji and Shi~\cite{RJS}. Another example is obtained from $\hat{S}= S-\tr(S)\langle,\rangle$ on pinched Cartan-Hadamard manifolds $(M^n,\langle,\rangle)$, $n\geq3$, in this case, we have $\dv\hat{S}=0$, since $\dv S=d\tr(S)$.

\section{Auxiliary results}
The key to prove the two main theorems of this paper relies on Proposition~\ref{Prop1GM}, which is a slight modification of Proposition~1 in~\cite{GomesMiranda}. With this in mind, we establish the necessary tools to work with the operator defined in Eq.~\eqref{defLintro} which enable us to obtain more general results. We believe that such tools are of independent interest. 

An important lemma that will be used in the proof of Proposition~\ref{Lem-LHCThm} is a known result in comparison geometry. Its proof can be found in~\cite{Petersen}.
\begin{lemma}\label{RC}[Rauch Comparison] Assume that $(M^n, \langle,\rangle)$ satisfies $c\leq K\leq C$. If $\langle,\rangle=dr^2+g_r$ represents the metric in the polar coordinates, then
\begin{equation*}
\frac{sn'_C(r)}{sn_C(r)}g_r\leq\nabla^2r\leq\frac{sn'_c(r)}{sn_c(r)}g_r,
\end{equation*}
where $sn_\kappa(r)$ denotes the unique solution to
\begin{equation*}
\ddot{x}(r) + \kappa\cdot x(r) = 0 \quad \hbox{with}\quad x(0)=0\quad \hbox{and}\quad \dot{x}(0)=1.
\end{equation*}
In particular, $\frac{sn'_\kappa(r)}{sn_\kappa(r)}=\sqrt{-\kappa}\frac{\cosh(\sqrt{-\kappa}r)}{\sinh(\sqrt{-\kappa}r)}$ for $\kappa<0$, and $\frac{sn'_\kappa(r)}{sn_\kappa(r)}=\frac{1}{r}$ for $\kappa=0$.
\end{lemma}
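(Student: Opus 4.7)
The plan is to reduce the two-sided matrix inequality for $\nabla^2 r$ to a comparison of scalar Riccati equations, exploiting the fact that in polar coordinates $\nabla^2 r$ agrees with the shape operator of the geodesic spheres centered at the basepoint.

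First I would record that $|\nabla r|\equiv 1$ on the domain where $r$ is smooth, so differentiating along any vector field gives $\nabla^2 r(\partial_r,\cdot)=0$. Hence $\nabla^2 r$ is completely determined by its restriction to $\partial_r^{\perp}$, and on that distribution it coincides with the shape operator $S$ of the geodesic sphere through the point; in terms of the polar decomposition $\langle,\rangle=dr^{2}+g_r$, one has $\nabla^{2}r=\tfrac12\partial_r g_r$ as a bilinear form on $\partial_r^{\perp}$. Differentiating the identity defining $S$ along the unit radial field, and using $\nabla_{\partial_r}\partial_r=0$, produces the Riccati equation
\[
\nabla_{\partial_r}S+S^{2}+R_{\partial_r}=0,\qquad R_{\partial_r}X:=R(X,\partial_r)\partial_r,
\]
and under the sectional curvature pinching one obtains the bilinear-form bounds $c\,g_r\leq R_{\partial_r}\leq C\,g_r$ on $\partial_r^{\perp}$.

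Next I would produce the model solutions. A direct computation from $\ddot{x}+\kappa x=0$ shows that $\phi_{\kappa}(r):=sn'_{\kappa}(r)/sn_{\kappa}(r)$ satisfies the scalar Riccati equation $\phi_{\kappa}'+\phi_{\kappa}^{2}+\kappa=0$, and a Taylor expansion at the origin gives $\phi_{\kappa}(r)=\tfrac{1}{r}+O(r)$. The conclusion we want is the matrix inequality $\phi_{C}(r)\,g_r\leq S\leq \phi_{c}(r)\,g_r$. This is the standard matrix Riccati comparison: if we set $A:=S-\phi_{C}\,\mathrm{Id}$, then subtracting the two Riccati equations gives $\nabla_{\partial_r}A+SA+A\,\phi_{C}=\phi_{C}^{2}+C-R_{\partial_r}\leq 0$ in the bilinear sense, so a Grönwall-type argument propagates the lower bound from $r\to 0^+$; the upper bound is entirely analogous. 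The explicit formulas for $\phi_\kappa$ stated in the lemma follow by integrating $\ddot x+\kappa x=0$ with $x(0)=0$, $\dot x(0)=1$.

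The main obstacle is the initial singularity at $r=0$: both $S$ and $\phi_{\kappa}\,\mathrm{Id}$ blow up like $\tfrac{1}{r}g_r$, so the comparison cannot be initialized directly. The cleanest way around this is to pass to Jacobi fields: for a Jacobi field $J$ along a unit-speed radial geodesic with $J(0)=0$ and $J'(0)\in\partial_r^{\perp}$ of unit length, one has $S(J)=J'$ and the curvature condition translates into the Jacobi equation whose norms $|J|$ are controlled by the Sturm comparison theorem against $sn_{\kappa}$. Reading the resulting inequality back through $S(J)=J'$ gives precisely $\phi_{C}g_r\leq S\leq \phi_{c}g_r$, and hence the claimed bounds on $\nabla^{2}r$.
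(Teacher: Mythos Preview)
The paper does not actually prove this lemma; it simply records it as a known comparison result and refers the reader to Petersen's textbook for the argument. Your outline is precisely the standard proof one finds there: reduce $\nabla^{2}r$ to the shape operator $S$ of the geodesic spheres, derive the Riccati equation $\nabla_{\partial_r}S+S^{2}+R_{\partial_r}=0$, compare with the scalar model $\phi_\kappa'+\phi_\kappa^{2}+\kappa=0$, and handle the $r\to 0^{+}$ singularity by passing to Jacobi fields and invoking Sturm comparison. So you have supplied more than the paper does, and your route matches the cited source.

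One small algebraic correction: in your subtracted Riccati identity the right-hand side should be $C\cdot\mathrm{Id}-R_{\partial_r}$ rather than $\phi_C^{2}+C-R_{\partial_r}$ (the $\phi_C^{2}$ terms cancel when you expand $S^{2}-\phi_C^{2}=A(S+\phi_C\,\mathrm{Id})$), and under the hypothesis $K\leq C$ this quantity is $\geq 0$, not $\leq 0$. With the correct sign the Gr\"onwall-type propagation yields $A\geq 0$ as you intend. This is a slip in the displayed computation, not a structural gap in the argument.
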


In what follows, $\Omega\subset M^n$, $n\geq2$, is a bounded domain in an $n$-dimensional pinched Cartan-Hadamard manifold as described in our introduction.

Our first proposition establishes the \emph{Rauch comparison theorem for the Cheng-Yau operator} and two estimates that will be used in the Bochner type formula for this operator.  
\begin{proposition}\label{Lem-LHCThm}
Fix an origin $o\in M^n\backslash\overline{\Omega}$, and let $r(x)$ be the distance function from $o$. Let $T$ be a symmetric positive definite $(1,1)$--tensor on $M^n$ such that $\partial_r$ is an eigenvector of $T$. Then, for $C=-\kappa_2^2$ and $c=-\kappa_1^2$, the following holds on $\Omega$:
\begin{enumerate}
\item $(n-1)\varepsilon \frac{sn'_C(r)}{sn_C(r)}\leq \Box r \leq (n-1)\delta \frac{sn'_c(r)}{sn_c(r)}.$
\item $\g{\nabla^2r}{\nabla^2r\circ T}\leq(n-1)\delta\Big(\frac{sn'_c(r)}{sn_c(r)}\Big)^2.$
\item $R_T(\partial_r,\partial_r)\leq -\varepsilon(n-1)\kappa_2^2.$
\end{enumerate}
\end{proposition}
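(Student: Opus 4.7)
My plan is to work in a single orthonormal frame adapted to both $T$ and to the distance function $r$, then let the Rauch comparison (Lemma~\ref{RC}) supply the pointwise bounds on $\nabla^2 r$, and let the sectional curvature pinching supply the bound on $R_T$. Since $T$ is symmetric and $\partial_r$ is an eigenvector of $T$, the hyperplane $\partial_r^\perp$ is $T$-invariant, so I can pick a pointwise orthonormal frame $\{e_0,e_1,\ldots,e_{n-1}\}$ with $e_0=\partial_r$ and $\{e_1,\ldots,e_{n-1}\}$ diagonalizing $T|_{\partial_r^\perp}$, with $Te_i=\mu_i e_i$ and $\mu_i\in[\varepsilon,\delta]$ for $i\geq 1$. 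Throughout I also use the standard vanishing $\nabla^2 r(\partial_r,\cdot)\equiv 0$ (obtained by differentiating $|\nabla r|^2=1$), which annihilates the $e_0$-row and, by symmetry, the $e_0$-column of $\nabla^2 r$.

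For part~(1), expanding in the frame gives
\[
\Box r=\tr(\nabla^2 r\circ T)=\sum_{i=0}^{n-1}\g{\nabla^2 r(Te_i)}{e_i}=\sum_{i=1}^{n-1}\mu_i\,\nabla^2 r(e_i,e_i),
\]
the $i=0$ term being killed. Rauch gives $\tfrac{sn'_C(r)}{sn_C(r)}\leq \nabla^2 r(e_i,e_i)\leq \tfrac{sn'_c(r)}{sn_c(r)}$, and both sides are positive since $c,C\leq 0$, so multiplying by $\mu_i\in[\varepsilon,\delta]$ and summing yields both inequalities. For part~(2), the same frame gives
\[
\g{\nabla^2 r}{\nabla^2 r\circ T}=\sum_{i=1}^{n-1}\mu_i\,|\nabla^2 r(e_i)|^2\leq \delta\sum_{i,j=1}^{n-1}\bigl(\nabla^2 r(e_i,e_j)\bigr)^2.
\]
Diagonalizing $\nabla^2 r|_{\partial_r^\perp}$ and again using Rauch, every eigenvalue of $\nabla^2 r|_{\partial_r^\perp}$ lies in the positive interval $[\tfrac{sn'_C(r)}{sn_C(r)},\tfrac{sn'_c(r)}{sn_c(r)}]$, so the sum above (the squared Hilbert--Schmidt norm of $\nabla^2 r|_{\partial_r^\perp}$) is at most $(n-1)\bigl(\tfrac{sn'_c(r)}{sn_c(r)}\bigr)^2$, giving the claim.

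For part~(3), the same expansion together with $R(\partial_r,\partial_r)=0$ and the symmetry of $T$ gives
\[
R_T(\partial_r,\partial_r)=\sum_{i=1}^{n-1}\mu_i\,\g{R(\partial_r,e_i)\partial_r}{e_i}=\sum_{i=1}^{n-1}\mu_i\,K(\partial_r,e_i),
\]
where in the last step I identify each inner product with the sectional curvature of the $2$-plane $\partial_r\wedge e_i$ in the paper's sign convention for the curvature operator. The pinching $K\leq-\kappa_2^2$ combined with $\mu_i\geq\varepsilon$ then yields the bound $-(n-1)\varepsilon\kappa_2^2$. No step is technically deep; the key structural point is that the hypotheses on $T$ are exactly what allow a simultaneous adaptation on $\partial_r^\perp$ turning every Hilbert--Schmidt trace into a diagonal sum, and the only care required is in tracking the sign convention so that $\g{R(\partial_r,e_i)\partial_r}{e_i}$ is correctly identified with $K(\partial_r,e_i)$.
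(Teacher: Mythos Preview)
Your proof is correct and follows essentially the same route as the paper's: both pick a pointwise orthonormal frame with $\partial_r$ as one vector and the rest diagonalizing $T|_{\partial_r^\perp}$, then reduce each of the three quantities to diagonal sums that are controlled by Rauch (parts~1 and~2) or by the curvature pinching (part~3). Your argument is slightly more explicit than the paper's in justifying why $\partial_r^\perp$ is $T$-invariant and in handling the Hilbert--Schmidt norm in part~(2), and your caution about the curvature sign convention is appropriate since the paper's own definition of $R_T$ and its computation in the proof differ by the order of the first two arguments of $R$.
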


\begin{proof}
Take $x\in\Omega$ and complete $\partial_r$ to an orthonormal basis $\{e_1,\ldots,e_n=\partial_r\}$ for $T_xM$ such that $Te_i=\psi_ie_i$. Note that $\varepsilon\leq\psi_i\leq\delta$ on $\Omega$, for $i=1,\ldots,n.$ Thus,
\begin{equation*}
\Box r = \g{\nabla^2r}{T}=\sum_{i=1}^{n}\g{\nabla^2r(e_i)}{T(e_i)}=\sum_{i=1}^{n-1}\psi_i\g{\nabla^2r(e_i)}{e_i}.
\end{equation*}
Since $r(x)$ is a convex function, i.e., its Hessian is positive semidefinite, one has
\begin{equation*}
\varepsilon \Delta r=\varepsilon \sum_{i=1}^{n-1}\g{\nabla^2r(e_i)}{e_i} \leq \Box r \leq \delta \sum_{i=1}^{n-1}\g{\nabla^2r(e_i)}{e_i}=\delta\Delta r.
\end{equation*}
Hence, the first assertion follows from Lemma~\ref{RC}. To prove the second assertion, we compute
\begin{equation*}
\g{\nabla^2r}{\nabla^2r\circ T}=\sum_{i=1}^{n}\g{\nabla^2r(e_i)}{\nabla^2 r\circ T(e_i)}=\sum_{i=1}^{n-1}\psi_i\g{\nabla^2r(e_i)}{\nabla^2r(e_i)}.
\end{equation*}
Thus, again we use that $r(x)$ is a convex function and that $\psi_i\leq\delta$ to obtain 
\begin{equation*}
\g{\nabla^2r}{\nabla^2r\circ T}\leq \delta |\nabla^2 r |^2.
\end{equation*}
Now, we use Lemma~\ref{RC} to get the second assertion. For the third assertion, we have
\begin{eqnarray*}
R_T(\partial_r,\partial_r)&=&\sum_{i=1}^{n-1}\g{R(e_i,\partial_r)\partial_r}{T(e_i)}=\sum_{i=1}^{n-1}\psi_i\g{R(e_i,\partial_r)\partial_r}{e_i}\\ &\leq&\sum_{i=1}^{n-1}-\psi_i\kappa_2^2 \leq -\varepsilon(n-1)\kappa_2^2.
\end{eqnarray*}
This completes the proof of the proposition.
\end{proof}

For the second proposition, it is convenient to consider the constant defined in Eq.~\eqref{CTE-a}.

\begin{proposition}\label{lemmaboxestimate}
Fix an origin $o\in M^n\backslash\overline{\Omega}$, and let $r(x)$ be the distance function from $o$. Let $T$ be a symmetric positive definite $(1,1)$--tensor on $M^n$ such that $\partial_r$ is an eigenvector of $T$, and $u_i$ be an $L^2(\Omega,\dm)$--normalized function. If $T$ is radially parallel, then:
\begin{enumerate}
\item For $a(n,\varepsilon,\delta)\leq0$, it is true that
\begin{equation*}
\int_{\Omega}u^2_i\Big(-(\square r)^2-2\g{\nabla\square r}{T\partial_r}\Big)\dm\leq -(n-1)^2\varepsilon^2\kappa^2_2+2(n-1)(\delta^2\kappa^2_1-\varepsilon^2\kappa^2_2).
\end{equation*}
\item For $a(n,\varepsilon,\delta)>0$, we consider $d=dist(\Omega,o)$  so that
\begin{equation*}
\int_{\Omega}u^2_i\Big(-(\square r)^2-2\g{\nabla\square r}{T\partial_r}\Big)\dm\leq -(n-1)^2\varepsilon^2\kappa^2_2+2(n-1)(\delta^2\kappa^2_1-\varepsilon^2\kappa^2_2) + \frac{a(n,\varepsilon,\delta)}{d^2}.
\end{equation*}
\end{enumerate}
\end{proposition}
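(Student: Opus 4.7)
The plan is to reduce the integrand to a pointwise expression depending only on $r$ and the ambient curvature data, and then to apply the three bounds in Proposition~\ref{Lem-LHCThm}. First, because $T$ is radially parallel and $\partial_r$ is an eigenvector, say $T\partial_r = \psi\partial_r$, the identity $\nabla_{\partial_r}(T\partial_r) = (\nabla_{\partial_r}T)\partial_r + T(\nabla_{\partial_r}\partial_r) = 0$ shows that $\psi$ is radially constant; in particular $\g{\nabla\square r}{T\partial_r} = \psi\,\partial_r(\square r)$ and $\varepsilon \leq \psi \leq \delta$ pointwise on $\Omega$.

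Next, I would apply the Bochner type formula~\eqref{BFCYOp} to $f = r$. Since $|\nabla r|^2 = 1$ and $\nabla_{\nabla r}T = \nabla_{\partial_r}T = 0$, the formula collapses to the radial identity $\g{\nabla\square r}{\partial_r} = -R_T(\partial_r,\partial_r) - \g{\nabla^2 r}{\nabla^2 r\circ T}$, and substituting back produces the pointwise expression
\[
-(\square r)^2 - 2\g{\nabla\square r}{T\partial_r} = -(\square r)^2 + 2\psi R_T(\partial_r,\partial_r) + 2\psi\g{\nabla^2 r}{\nabla^2 r\circ T}.
\]
Bounding each summand with Proposition~\ref{Lem-LHCThm} together with $\varepsilon\leq\psi\leq\delta$ gives $-(\square r)^2 \leq -(n-1)^2\varepsilon^2\kappa_2^2\coth^2(\kappa_2 r)$ (from the lower bound on $\square r$), $2\psi R_T(\partial_r,\partial_r) \leq -2(n-1)\varepsilon^2\kappa_2^2$ (using $\psi\geq\varepsilon$ and the negativity of $R_T$), and $2\psi\g{\nabla^2 r}{\nabla^2 r\circ T}\leq 2(n-1)\delta^2\kappa_1^2\coth^2(\kappa_1 r)$ (using $\psi\leq\delta$).

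Writing $\coth^2 x = 1 + 1/\sinh^2 x$ peels off the main constant $-(n-1)^2\varepsilon^2\kappa_2^2 + 2(n-1)(\delta^2\kappa_1^2 - \varepsilon^2\kappa_2^2)$ and leaves the $r$-dependent remainder
\[
\mathcal{R}(r) := -\frac{(n-1)^2\varepsilon^2\kappa_2^2}{\sinh^2(\kappa_2 r)} + \frac{2(n-1)\delta^2\kappa_1^2}{\sinh^2(\kappa_1 r)}.
\]
The crux of the argument, and what I expect to be the main obstacle, is controlling $\mathcal{R}(r)$ by $a(n,\varepsilon,\delta)\,\kappa_2^2/\sinh^2(\kappa_2 r)$. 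This reduces to the pinched-curvature comparison $\kappa_1^2/\sinh^2(\kappa_1 r) \leq \kappa_2^2/\sinh^2(\kappa_2 r)$ for $0 < \kappa_2 \leq \kappa_1$, which follows from the monotonicity of $\alpha\mapsto\sinh(\alpha r)/\alpha$ on $(0,\infty)$, verified by the elementary sign of $\alpha r\cosh(\alpha r) - \sinh(\alpha r)$ at the derivative level.

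With that comparison, the dichotomy on the sign of $a(n,\varepsilon,\delta)$ is immediate. In case~(1) with $a \leq 0$ the remainder is nonpositive and is simply discarded. In case~(2) with $a > 0$, using $\sinh(\kappa_2 r) \geq \kappa_2 r \geq \kappa_2 d$ yields $\mathcal{R}(r) \leq a(n,\varepsilon,\delta)/d^2$. Integrating the resulting pointwise inequality against $u_i^2\,\dm$ and invoking the normalization $\int_\Omega u_i^2\,\dm = 1$ finishes both assertions.
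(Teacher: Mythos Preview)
Your proposal is correct and follows essentially the same route as the paper: apply the Bochner formula~\eqref{BFCYOp} to $f=r$ using $\nabla_{\partial_r}T=0$, bound the three resulting terms via Proposition~\ref{Lem-LHCThm}, split off the main constant through $\coth^2 x = 1 + 1/\sinh^2 x$, and control the remainder by the monotonicity of $\alpha\mapsto \sinh(\alpha r)/\alpha$. Two small remarks. First, the paper bounds the remainder by $a(n,\varepsilon,\delta)\,\kappa_1^2/\sinh^2(\kappa_1 r)$ rather than by $a(n,\varepsilon,\delta)\,\kappa_2^2/\sinh^2(\kappa_2 r)$ as you do; both choices are valid since the negative and positive coefficients can be pulled to either common factor. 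Second, your write-up implicitly assumes $\kappa_2>0$ (and $\kappa_1>0$) when you write $\coth(\kappa_2 r)$ and $\kappa_2^2/\sinh^2(\kappa_2 r)$; the paper handles the degenerate cases $\kappa_2=0$ and $\kappa_1=\kappa_2=0$ by a separate case split, replacing the hyperbolic expressions by their limits $1/r$ and $1/r^2$. You should either add a sentence invoking these limits or insert the short case analysis, but no new idea is needed.
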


\begin{proof}
We begin by estimating the expression $-(\square r)^2-2\g{\nabla\square r}{T\partial_r}$. For it, we use that $T$ is radially parallel, i.e., $\nabla_{\partial_r}T$ is null, so that, from Bochner type formula for the Cheng-Yau operator, we obtain
\begin{equation*}
-\g{\nabla\square r }{\partial_r}=R_T(\partial_r,\partial_r)+\g{\nabla^2r}{\nabla^2r\circ T}.
\end{equation*}
Since $T\partial_r=\psi_n\partial_r$, from the second and third assertions of Proposition~\ref{Lem-LHCThm}, we have
\begin{equation}\label{IPTr}
-\g{\nabla\square r }{T\partial_r}=-\psi_n\g{\nabla\square r }{\partial_r} \leq - (n-1)\varepsilon^2\kappa_2^2 + (n-1)\delta^2\left(\frac{sn'_{-\kappa_1^2}(r)}{sn_{-\kappa_1^2}(r)}\right)^2.
\end{equation}
There are three cases to consider:

\vspace{0.1cm}
\noindent\textbf{(a)} $0 <\kappa_2\leq \kappa_1$ case: Inequality~\eqref{IPTr} becomes
\begin{equation*}
-\g{\nabla\square r }{T\partial_r} \leq (n-1)\delta^2\kappa^2_1\frac{\cosh^2(\kappa_1r)}{\sinh^2(\kappa_1r)} - (n-1)\varepsilon^2\kappa_2^2.
\end{equation*}
So, from the first assertion of Proposition~\ref{Lem-LHCThm}, we estimate the expression
\begin{align*}
&-(\square r)^2-2\g{\nabla\square r}{T\partial_r}\\
&\leq -(n-1)^2\varepsilon^2\kappa^2_2\frac{\cosh^2(\kappa_2r)}{\sinh^2(\kappa_2r)} +2(n-1)\delta^2\kappa^2_1\frac{\cosh^2(\kappa_1r)}{\sinh^2(\kappa_1r)}-2(n-1)\varepsilon^2\kappa_2^2\\
&=-(n-1)^2\varepsilon^2\kappa^2_2-\frac{(n-1)^2\varepsilon^2\kappa^2_2}{\sinh^2(\kappa_2r)}+2(n-1)\delta^2\kappa^2_1+
\frac{2(n-1)\delta^2\kappa_1^2}{\sinh^2(\kappa_1r)}-2(n-1)\varepsilon^2\kappa_2^2\\
&=-(n-1)^2\varepsilon^2\kappa^2_2+2(n-1) (\delta^2\kappa^2_1-\varepsilon^2\kappa^2_2)+\left[-\frac{(n-1)^2\varepsilon^2\kappa^2_2}{\sinh^2(\kappa_2r)}+\frac{2(n-1)\delta^2\kappa^2_1}{\sinh^2(\kappa_1r)}\right].
\end{align*}
Since $0 <\kappa_2\leq \kappa_1$ and $r>0$, we get
\begin{eqnarray*}
\frac{\kappa^2_1}{\sinh^2(\kappa_1r)}\leq \frac{\kappa^2_2}{\sinh^2(\kappa_2r)}.
\end{eqnarray*}
Thus,
\begin{equation}\label{a-case}
-\frac{(n-1)^2\varepsilon^2\kappa^2_2}{\sinh^2(\kappa_2r)}+\frac{2(n-1)\delta^2\kappa^2_1}{\sinh^2(\kappa_1r)}\leq \left(-(n-1)^2\varepsilon^2+2(n-1)\delta^2\right)\frac{\kappa^2_1}{\sinh^2(\kappa_1r)}.
\end{equation}

\vspace{0.1cm}
\noindent\textbf{(b)} $0=\kappa_2 < \kappa_1$ case: Again from the first assertion of Proposition~\ref{Lem-LHCThm} and Inequality~\eqref{IPTr}, we estimate the expression
\begin{align*}
-(\square r)^2-2\g{\nabla\square r}{T\partial_r}
&\leq -\frac{(n-1)^2\varepsilon^2}{r^2}+2(n-1)\delta^2\kappa^2_1\frac{\cosh^2(\kappa_1r)}{\sinh^2(\kappa_1r)}\\
&= 2(n-1)\delta^2\kappa^2_1 +\left[-\frac{(n-1)^2\varepsilon^2}{r^2}+
\frac{2(n-1)\delta^2\kappa_1^2}{\sinh^2(\kappa_1r)}\right].
\end{align*}
Since $0 < \kappa_1$ and $r>0$, we get
\begin{equation}\label{Ineq-sinh-r}
\frac{\kappa^2_1}{\sinh^2(\kappa_1r)}\leq \frac{1}{r^2}.
\end{equation}
Thus,
\begin{equation}\label{b-case}
-\frac{(n-1)^2\varepsilon^2}{r^2}+\frac{2(n-1)\delta^2\kappa^2_1}{\sinh^2(\kappa_1r)}\leq \left(-(n-1)^2\varepsilon^2+2(n-1)\delta^2\right)\frac{\kappa^2_1}{\sinh^2(\kappa_1r)}.
\end{equation}

\vspace{0.1cm}
\noindent\textbf{(c)} $0 =\kappa_2=\kappa_1$ case: Again from the first assertion of Proposition~\ref{Lem-LHCThm} and Inequality~\eqref{IPTr}, we estimate the expression
\begin{equation}\label{c-case}
-(\square r)^2-2\g{\nabla\square r}{T\partial_r} \leq    \Big[-(n-1)^2\varepsilon^2+2(n-1)\delta^2\Big]\frac{1}{r^2}.
\end{equation}
Now, with inequalities~\eqref{a-case}, \eqref{b-case} and \eqref{c-case} in mind, we immediately obtain the first integral estimate of the proposition for $a(n,\varepsilon,\delta)\leq0$. For the case of $a(n,\varepsilon,\delta)>0$, first note that
\begin{equation}\label{Est-d}
\int_{\Omega}u_i^2\frac{a(n,\varepsilon,\delta)}{r^2}\dm\leq\frac{a(n,\varepsilon,\delta)}{d^2},
\end{equation}
where $d=dist(\Omega,o)$, since that $0<d\leq r(x)$, for all $x\in\Omega$. We now complete our proof immediately from inequalities \eqref{a-case}, \eqref{Ineq-sinh-r}, \eqref{b-case}, \eqref{c-case} and \eqref{Est-d}.
\end{proof}

\begin{proposition}\label{Prop-C0}
Fix an origin $o\in M^n\backslash\overline{\Omega}$, and let $r(x)$ be the distance function from $o$. Let $T$ be a symmetric positive definite $(1,1)$--tensor on $M^n$ such that $\partial_r$ is an eigenvector of $T$, and $u_i$ be an $L^2(\Omega,\dm)$--normalized  function. Then,
\begin{equation*}
\int_\Omega u^2_i\Box r \g{T\partial_r}{\nabla\eta}\dm\leq C_0(n-1)\big(\kappa_1 + \frac{1}{d}\big),
\end{equation*}
where $d=dist(\Omega,o)$ and $C_0=\delta^2\max_{\bar{\Omega}}|\dot{\eta}|$.
\end{proposition}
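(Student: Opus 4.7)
The plan is to reduce the integrand to an elementary radial expression using the eigenvector hypothesis, apply the upper bound from Proposition~\ref{Lem-LHCThm}(1), and then compare the hyperbolic cotangent to $\kappa_1 + 1/r$. Since $T\partial_r = \psi_n\partial_r$ with $\varepsilon\leq\psi_n\leq\delta$ on $\Omega$, we have $\langle T\partial_r,\nabla\eta\rangle = \psi_n\,\dot\eta$, so
\begin{equation*}
\Box r\,\langle T\partial_r,\nabla\eta\rangle \leq \psi_n|\dot\eta|\,\Box r \leq \delta|\dot\eta|\,\Box r,
\end{equation*}
where I use that $\Box r\geq 0$ (a consequence of the convexity of $r$ and of $\varepsilon\Delta r \leq \Box r$, as already established in the proof of Proposition~\ref{Lem-LHCThm}).

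Next I would invoke Proposition~\ref{Lem-LHCThm}(1) with $c=-\kappa_1^2$, which for $\kappa_1>0$ gives $\Box r\leq (n-1)\delta\,\kappa_1\coth(\kappa_1 r)$, and for $\kappa_1=0$ gives $\Box r\leq (n-1)\delta/r$. In both cases the right-hand side can be controlled by $(n-1)\delta\bigl(\kappa_1 + 1/r\bigr)$; the only content in this step is the elementary estimate
\begin{equation*}
\kappa_1\coth(\kappa_1 r) \leq \kappa_1 + \frac{1}{r}\qquad\text{for all } r>0,\ \kappa_1\geq 0,
\end{equation*}
which, setting $t=\kappa_1 r$, reduces to $e^{2t}-1\geq 2t$ and hence to the standard Taylor expansion of the exponential.

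Combining the two displays above,
\begin{equation*}
\Box r\,\langle T\partial_r,\nabla\eta\rangle \leq (n-1)\,\delta^2|\dot\eta|\Bigl(\kappa_1 + \frac{1}{r}\Bigr) \leq (n-1)\,C_0\Bigl(\kappa_1+\frac{1}{r}\Bigr),
\end{equation*}
using $C_0=\delta^2\max_{\bar\Omega}|\dot\eta|$. Finally, since $o\notin\overline\Omega$, one has $r(x)\geq d=\mathrm{dist}(\Omega,o)>0$ on $\Omega$, so $1/r\leq 1/d$, and integrating against $u_i^2\,d\mu$ together with $\int_\Omega u_i^2\,d\mu = 1$ yields the claimed bound.

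The only step that is not immediately routine is the inequality $\kappa_1\coth(\kappa_1 r)\leq \kappa_1 + 1/r$, which is what ensures that the two different large-$r$ and small-$r$ asymptotics of the Rauch upper bound combine into the single expression $\kappa_1+1/d$ appearing in the statement; once that comparison is in place, the rest is direct application of earlier results.
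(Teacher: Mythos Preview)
Your argument is correct. You proceed by a direct pointwise bound: first $\Box r\,\langle T\partial_r,\nabla\eta\rangle = \Box r\,\psi_n\dot\eta\leq \delta|\dot\eta|\,\Box r$ (using $\Box r\geq 0$), then Proposition~\ref{Lem-LHCThm}(1) together with the elementary inequality $\kappa_1\coth(\kappa_1 r)\leq \kappa_1+1/r$ (equivalently $e^{2t}-1\geq 2t$) gives the pointwise estimate, and you finish with $1/r\leq 1/d$ and $\int u_i^2\,\dm=1$.

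The paper follows a different route. It first applies the Cauchy--Schwarz inequality with respect to the measure $u_i^2\,\dm$ to split the integral into a factor $\bigl(\int u_i^2(\Box r)^2\,\dm\bigr)^{1/2}$ and a factor controlled by $\delta\max_{\bar\Omega}|\dot\eta|$. It then bounds $(\Box r)^2$ via Proposition~\ref{Lem-LHCThm}(1), writes $\kappa_1^2\coth^2(\kappa_1 r)=\kappa_1^2+\kappa_1^2/\sinh^2(\kappa_1 r)$, uses the subadditivity of the square root to separate the two contributions, and finally invokes $\kappa_1^2/\sinh^2(\kappa_1 r)\leq 1/r^2\leq 1/d^2$. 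Your approach is more elementary: the Cauchy--Schwarz step and the square-root subadditivity are replaced by a single pointwise comparison $\coth t\leq 1+1/t$, and no intermediate $L^2$ manipulation is needed. The paper's route, on the other hand, isolates $\Box r$ and $\langle T\partial_r,\nabla\eta\rangle$ in separate factors, which is a structure that can be reused if one wants to bound each ingredient by something other than its $L^\infty$ norm; here that extra flexibility is not exploited, so your shortcut loses nothing.
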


\begin{proof}
From the first assertion of Proposition~\ref{Lem-LHCThm}, we estimate
\begin{eqnarray*}
\int_\Omega u^2_i\Box r \g{T\partial_r}{\nabla\eta}\dm &\leq& 
\Big(\int_\Omega u^2_i(\Box r)^2\dm\Big)^{\frac{1}{2}}\Big(\int_\Omega u^2_i\g{T\partial_r}{\nabla\eta}^2\dm\Big)^{\frac{1}{2}}\\
&\leq& \delta \max_{\bar{\Omega}}|\dot{\eta}|\Big(\int_\Omega u^2_i(\Box r)^2\dm\Big)^{\frac{1}{2}}\\
&\leq& C_0(n-1)\Big(\int_\Omega u^2_i\Big(\frac{sn'_{-\kappa^2_1}(r)}{sn_{-\kappa^2_1}(r)}\Big)^2\dm\Big)^{\frac{1}{2}}.
\end{eqnarray*}
There are two cases to consider:

\vspace{0.1cm}
\noindent\textbf{(a)} $0 <\kappa_2\leq \kappa_1$ and $0=\kappa_2 < \kappa_1$ cases:
\begin{eqnarray*}
\int_\Omega u^2_i\Box r \g{T\partial_r}{\nabla\eta}\dm&\leq& C_0(n-1)\kappa_1\Big(\int_\Omega u^2_i\frac{\cosh^2(\kappa_1r)}{\sinh^2(\kappa_1r)}\dm\Big)^{\frac{1}{2}}\\
&\leq& C_0(n-1)\kappa_1+C_0(n-1)\Big(\int_\Omega u^2_i\frac{\kappa_1^2}{\sinh^2(\kappa_1r)}\dm\Big)^{\frac{1}{2}}.\\
&\leq& C_0(n-1)\Big(\kappa_1+\Big(\int_\Omega u^2_i\frac{\kappa_1^2}{\sinh^2(\kappa_1r)}\dm\Big)^{\frac{1}{2}}\Big).
\end{eqnarray*}

\vspace{0.1cm}
\noindent\textbf{(b)} $\kappa_1=\kappa_2=0$ case:
\begin{equation*}
\int_\Omega u^2_i\Box r \g{T\partial_r}{\nabla\eta}\dm \leq C_0(n-1)\Big(\int_\Omega u^2_i\frac{1}{r^2}\dm\Big)^{\frac{1}{2}}.
\end{equation*}
As we argued in the proof of Proposition~\ref{lemmaboxestimate}, we obtain the required integral estimate of the present proposition.
\end{proof}

\section{Proof of Theorems~\ref{propCYT-c3} and \ref{thmA3-c3}}
The next step before giving the proof of the two main theorems is a universal quadratic inequality for the eigenvalues of Problem~\eqref{PD-c3}, which is an essential tool for us.
\begin{proposition}\label{Prop1GM}
Fix an origin $o\in M^n\backslash\overline{\Omega}$, and let $r(x)$ be the distance function from $o$. Let $\lambda_i$ be the $i$-th eigenvalue of Problem~\eqref{PD-c3}, and $u_i$ be its corresponding $L^2(\Omega,\dm)$--normalized eigenfunction. Then, 
\begin{equation*}
\sum_{i=1}^k(\lambda_{k+1}-\lambda_i)^2 \leq\frac{1}{\varepsilon}\sum_{i=1}^k(\lambda_{k+1} -\lambda_i)\Big(\frac{4\delta^2}{\varepsilon}\lambda_i - \int_{\Omega}u_i^2\big((\mathscr{L}r)^2+2\g{T\partial_r}{\nabla \Lu r}\big)\dm\Big).
\end{equation*}
\end{proposition}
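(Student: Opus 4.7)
My plan is to follow the classical Yang commutator scheme, adapted to the self-adjoint operator $\mathscr{L}$ on the weighted Hilbert space $\mathcal{H}_0^1(\Omega,\dm)$, much as in~\cite{GomesMiranda} but keeping careful track of how the distance function $r$ interacts with $\mathscr{L}$. For each $i\in\{1,\ldots,k\}$, I introduce the trial function
\[
\phi_i := r u_i - \sum_{j=1}^k a_{ij}\,u_j,\qquad a_{ij}:=\int_\Omega r u_i u_j\dm,
\]
which lies in $\mathcal{H}_0^1(\Omega,\dm)$ and is $L^2(\Omega,\dm)$-orthogonal to $u_1,\ldots,u_k$. Applying the Rayleigh--Ritz characterization of $\lambda_{k+1}$ and expanding $\mathscr{L}(r u_i) = r\mathscr{L}u_i + u_i\mathscr{L}r + 2T(\nabla r,\nabla u_i) = -\lambda_i r u_i + w_i$ with $w_i:=2T(\nabla r,\nabla u_i) + u_i\mathscr{L}r$, the orthogonality reduces matters to the basic commutator inequality
\[
(\lambda_{k+1}-\lambda_i)\int_\Omega \phi_i^2\dm \;\leq\; -\int_\Omega \phi_i w_i\dm =: B_i.
\]

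Next, the self-adjointness of $\mathscr{L}$ in $\mathcal{H}_0^1(\Omega,\dm)$ yields $b_{ij}:=\int_\Omega u_j w_i\dm=(\lambda_i-\lambda_j)a_{ij}$. Subtracting the $L^2(\Omega,\dm)$-projection of $w_i$ onto the span of $u_1,\ldots,u_k$ and applying Cauchy--Schwarz gives $B_i^2\leq\int_\Omega\phi_i^2\dm\cdot\bigl(\int_\Omega w_i^2\dm-\sum_j b_{ij}^2\bigr)$; combined with the basic inequality this yields
\[
(\lambda_{k+1}-\lambda_i)B_i \;\leq\; \int_\Omega w_i^2\dm - \sum_{j=1}^k(\lambda_i-\lambda_j)^2 a_{ij}^2.
\]
I then turn both sides into concrete geometric quantities through two weighted integrations by parts. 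Using $2u_iT(\nabla r,\nabla u_i)=T(\nabla r,\nabla u_i^2)$ together with the Leibniz identity applied to $r^2$ and integration by parts against $\dm$, I expect
\[
B_i \;=\; \int_\Omega u_i^2 T(\nabla r,\nabla r)\dm + \sum_{j=1}^k(\lambda_i-\lambda_j)a_{ij}^2.
\]
For $\int_\Omega w_i^2\dm$, the cross term $4\int_\Omega u_i(\mathscr{L}r)T(\nabla r,\nabla u_i)\dm$ becomes $2\int_\Omega\langle(\mathscr{L}r)T\nabla r,\nabla u_i^2\rangle\dm$, and integrating by parts against $\dm$ together with the tautology $\dv(T\nabla r)-\langle\nabla\eta,T\nabla r\rangle=\mathscr{L}r$ produces
\[
\int_\Omega w_i^2\dm \;=\; 4\int_\Omega T(\nabla r,\nabla u_i)^2\dm - \int_\Omega u_i^2\bigl((\mathscr{L}r)^2 + 2\langle T\partial_r,\nabla\mathscr{L}r\rangle\bigr)\dm.
\]

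For the final assembly, the bound $|T\nabla r|\leq\delta$ (the eigenvalues of $T$ lie in $[\varepsilon,\delta]$ and $|\nabla r|=1$) combined with $\int_\Omega|\nabla u_i|^2\dm\leq\lambda_i/\varepsilon$ gives $4\int T(\nabla r,\nabla u_i)^2\dm\leq(4\delta^2/\varepsilon)\lambda_i$. Inserting the expression for $B_i$ into the upgraded inequality, multiplying by $(\lambda_{k+1}-\lambda_i)$, and summing $i=1,\ldots,k$, the critical algebraic observation is that the cross sum $\sum_{i,j=1}^k(\lambda_{k+1}-\lambda_i)(\lambda_i-\lambda_j)(\lambda_{k+1}-\lambda_j)a_{ij}^2$ vanishes, because its trilinear factor is antisymmetric under $i\leftrightarrow j$ while $a_{ij}^2$ is symmetric. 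Writing $V_i:=\int_\Omega u_i^2 T(\nabla r,\nabla r)\dm$, what survives is
\[
\sum_{i=1}^k(\lambda_{k+1}-\lambda_i)^2 V_i \;\leq\; \sum_{i=1}^k(\lambda_{k+1}-\lambda_i)\Bigl(\tfrac{4\delta^2}{\varepsilon}\lambda_i - \int_\Omega u_i^2\bigl((\mathscr{L}r)^2+2\langle T\partial_r,\nabla\mathscr{L}r\rangle\bigr)\dm\Bigr),
\]
and the bound $V_i\geq\varepsilon$ (from the $L^2(\Omega,\dm)$-normalization of $u_i$ together with $\varepsilon I\leq T$) followed by division by $\varepsilon$ yields the stated inequality. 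The step I expect to be the main obstacle is the weighted integration by parts identifying the divergence of the vector field $(\mathscr{L}r)T\nabla r$ with $(\mathscr{L}r)^2+\langle T\partial_r,\nabla\mathscr{L}r\rangle$; this closes up only because $\mathscr{L}$ is by construction the $\eta$-weighted divergence of $T\nabla(\cdot)$, and it is precisely this self-referential feature that forces the exact combination $(\mathscr{L}r)^2+2\langle T\partial_r,\nabla\mathscr{L}r\rangle$ to appear in the final bound.
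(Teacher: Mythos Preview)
Your proposal is correct and follows essentially the same route as the paper. The only difference is packaging: the paper invokes Proposition~1 of \cite{GomesMiranda} as a black box to obtain
\[
\sum_{i=1}^k(\lambda_{k+1}-\lambda_i)^2\int_\Omega u_i^2\,T(\nabla r,\nabla r)\,\dm \;\le\; \sum_{i=1}^k(\lambda_{k+1}-\lambda_i)\int_\Omega\{u_i\mathscr{L}r+2T(\partial_r,\nabla u_i)\}^2\,\dm,
\]
whereas you re-derive exactly this inequality from scratch via the Yang trial functions $\phi_i=ru_i-\sum_j a_{ij}u_j$ and the antisymmetry cancellation; after that point the two arguments coincide (the same integration by parts turning the cross term into $-2\int u_i^2\bigl((\mathscr{L}r)^2+\langle T\partial_r,\nabla\mathscr{L}r\rangle\bigr)\dm$, the same bounds $T(\partial_r,\partial_r)\ge\varepsilon$ and $|T\nabla u_i|^2\le\delta^2|\nabla u_i|^2\le(\delta^2/\varepsilon)\lambda_i$).
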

\begin{proof}
Proposition~1 in \cite{GomesMiranda} says that
\begin{equation*}
\sum_{i=1}^k(\lambda_{k+1}-\lambda_i)^2\! \int_{\Omega} u_i^2T(\nabla h,\nabla h)\dm \leq\sum_{i=1}^k(\lambda_{k+1} -\lambda_i)\! \int_{\Omega}\big\{u_i\mathscr{L}h+2T(\nabla h,\nabla u_i)\big\}^2\dm
\end{equation*}
for any $h \in \mathcal{C}^3(\Omega)\cap\mathcal{C}^2(\partial \Omega)$. By taking $h=r$, and noting that $\varepsilon \leq T(\partial_r,\partial_r)\leq\delta$, we obtain
\begin{equation}\label{Eq1-Prop1GM}
\varepsilon\sum_{i=1}^k(\lambda_{k+1}-\lambda_i)^2 \leq\sum_{i=1}^k(\lambda_{k+1} -\lambda_i) \int_{\Omega}\big\{u_i\mathscr{L}r+2T(\partial_r,\nabla u_i)\big\}^2\dm.
\end{equation}
The integral in \eqref{Eq1-Prop1GM} is estimated as follows
\begin{align}\label{Aux-Ju1}
\nonumber&\int_{\Omega}\big\{u_i^2(\mathscr{L}r)^2+4u_i\Lu rT(\partial_r,\nabla u_i)+4T(\partial_r,\nabla u_i)^2\big\}\dm.\\
\nonumber&=\int_{\Omega}\big\{u_i^2(\mathscr{L}r)^2+2T(\partial_r,\Lu r\nabla u^2_i)+4\langle\partial_r,T(\nabla u_i)\rangle^2\big\}\dm.\\
&\leq \int_{\Omega}\Big\{u_i^2(\mathscr{L}r)^2+2T\Big(\partial_r,\nabla(u^2_i\Lu r)-u^2_i\nabla \Lu r\Big)\Big\}\dm+4\int_\Omega |T(\nabla u_i)|^2\dm.
\end{align}
Integration by part formula gives us
\begin{equation}\label{Aux-Ju2}
\int_{\Omega}\big\{u_i^2(\mathscr{L}r)^2+2T(\partial_r,\nabla(u^2_i\Lu r))\big\}\dm = - \int_{\Omega}u_i^2(\mathscr{L}r)^2\dm.
\end{equation}
Moreover,
\begin{equation*}
\varepsilon\int_\Omega|\nabla u_i|^2\dm\leq \int_\Omega \g{\nabla u_i}{T(\nabla u_i)}\dm=\lambda_i.
\end{equation*}
So,
\begin{equation}\label{Aux-Ju3}
\int_\Omega |T(\nabla u_i)|^2\dm\leq \delta^2\int_\Omega|\nabla u_i|^2\dm\leq\frac{\delta^2\lambda_i}{\varepsilon}.
\end{equation}
Using \eqref{Aux-Ju1}, \eqref{Aux-Ju2} and \eqref{Aux-Ju3}, we get
\begin{eqnarray}\label{Eq2-Prop1GM}
&&\int_{\Omega}\big\{u_i^2(\mathscr{L}r)^2+4u_i\Lu rT(\partial_r,\nabla u_i)+4T(\partial_r,\nabla u_i)^2\big\}\dm.\nonumber\\
&\leq& - \int_{\Omega}u_i^2\big\{(\mathscr{L}r)^2+2T(\partial_r,\nabla \Lu r)\big\}\dm + 4\frac{\delta^2\lambda_i}{\varepsilon}.
\end{eqnarray}
Substituting \eqref{Eq2-Prop1GM} into \eqref{Eq1-Prop1GM} we obtain the estimate of the proposition.
\end{proof}

We are now ready to prove our main results.

\subsection{Proof of Theorem~\ref{propCYT-c3}}
\begin{proof}
We begin by proving the result of the theorem for the hyperbolic space $\mathbb{H}^n(-1)$ case with constant curvature $-1$, i.e., the open half space $x_n >0$ with its standard metric $g_{ij}=x_n^{-2}\delta_{ij}$. For this case, we have
\begin{equation*}
\nabla\ln x_n = g^{ij}\partial_i\ln x_n\partial_j = x_n\partial_n \quad \hbox{and} \quad |\nabla \ln x_n|=1.
\end{equation*}
By hypotheses $T(\nabla\ln x_n)=\psi\nabla\ln x_n=\psi x_n\partial_n$ and $\partial_n \eta=0$, thus 
\begin{eqnarray*}
\g{\nabla\eta}{T(\nabla\ln x_n)} = \g{g^{ij}\partial_i\eta\partial_j}{\psi x_n\partial_n} = 0.
\end{eqnarray*}
Recall that $\dv(a^i\partial_i)=\frac{1}{\sqrt{det(g_{ij})}}\partial_i(\sqrt{det(g_{ij})}a^i)$, and by assumption $\partial_n\psi=0$, so that
\begin{eqnarray*}
\mathscr{L}(\ln x_n)&=&\dv(T(\nabla \ln x_n))-\g{\nabla\eta}{T(\nabla\ln x_n)}\\
&=& \dv(\psi x_n\partial_n) = x_n^n\partial_n(\psi x_n^{1-n}) = (1-n)\psi.
\end{eqnarray*}
So,
\begin{equation*}
\langle T(\nabla\ln x_n),\nabla\mathscr{L}(\ln x_n)\rangle = \langle\psi x_n\partial_n,(1-n)\nabla\psi\rangle = (1-n)\psi x_n\partial_n\psi=0.
\end{equation*}
Since $\ln x_n$ works as a distance function on $\mathbb{H}^n(-1)$, we can take $r=\ln x_n$ in Proposition~\ref{Prop1GM} to obtain
\begin{eqnarray*}
\sum_{i=1}^k(\lambda_{k+1}-\lambda_i)^2\leq\frac{1}{\varepsilon}\sum_{i=1}^k(\lambda_{k+1} -\lambda_i)\Big(\frac{4\delta^2}{\varepsilon}\lambda_i - \int_{\Omega}u_i^2(1-n)^2\psi^2\dm\Big).
\end{eqnarray*}
Since $\varepsilon\leq \langle T(\nabla\ln x_n),\nabla\ln x_n\rangle=\psi$, we have $-\psi^2\leq-\varepsilon^2$, which is enough to obtain the next inequality 
\begin{equation*}
\sum_{i=1}^k(\lambda_{k+1}-\lambda_i)^2 \leq\frac{1}{\varepsilon}\sum_{i=1}^k(\lambda_{k+1} -\lambda_i)\Big(\frac{4\delta^2}{\varepsilon}\lambda_i-(n-1)^2\varepsilon^2\Big).
\end{equation*}
Now, by rescaling the metric by a factor of $\kappa^{-2}$, the previous inequality for the case of $\mathbb{H}^n(-\kappa^2)$ becomes
\begin{equation*}
\displaystyle\sum_{i=1}^k(\lambda_{k+1}-\lambda_i)^2 \leq\frac{1}{\varepsilon}\sum_{i=1}^k(\lambda_{k+1} -\lambda_i)\Big(\frac{4\delta^2}{\varepsilon}\lambda_i-(n-1)^2\varepsilon^2\kappa^2\Big).
\end{equation*}
Moreover, quadratic estimate \eqref{Eq1-Prop1GM} guarantees that 
\begin{equation*}
\frac{4\delta^2}{\varepsilon}\lambda_i-(n-1)^2\varepsilon^2\kappa^2\geq0,
\end{equation*}
for $i\in\{1,2,\ldots\},$ but the interesting case is $\lambda_1\geq\frac{\varepsilon}{4\delta^2}(n-1)^2\varepsilon^2\kappa^2>0,$  since $n\geq2$ and $\kappa>0.$
\end{proof}

\subsection{Proof of Theorem~\ref{thmA3-c3}} 
\begin{proof}
By Proposition~\ref{Prop1GM}, we just need to estimate the integral
\begin{equation*}
-\int_{\Omega}u_i^2\big((\mathscr{L}r)^2+2\g{T\partial_r}{\nabla \Lu r}\big)\dm,
\end{equation*}
for the particular case of the drifted Cheng-Yau $\Lu r=\Box r-\g{T\partial_r}{\nabla \eta}$. First, we compute
\begin{eqnarray*}
&&(\mathscr{L}r)^2+2\g{T\partial_r}{\nabla \Lu r}\\
&&=(\Box r)^2-2\Box r \g{T\partial_r}{\nabla\eta}+\g{T\partial_r}{\nabla\eta}^2+2\g{T\partial_r}{\nabla \Box r}-2\g{T\partial_r}{\nabla \g{T\partial_r}{\nabla\eta}}\\
&&=(\Box r)^2+2\g{T\partial_r}{\nabla \Box r}-2\Box r \g{T\partial_r}{\nabla\eta}-2\g{T\partial_r}{\nabla \g{T\partial_r}{\nabla \eta}}+\g{T\partial_r}{\nabla\eta}^2.
\end{eqnarray*}
Therefore
\begin{align*}
-&\int_{\Omega}u^2_i\big((\mathscr{L}r)^2+2\g{T\partial_r}{\nabla \Lu r}\big)\dm\\
=&\int_{\Omega}u^2_i\Big(-(\Box r)^2-2\g{T\partial_r}{\nabla \Box r}\Big)\dm 
+2\int_\Omega u^2_i\Box r \g{T\partial_r}{\nabla\eta}\dm\\
&+\int_\Omega u^2_i \Big(2\g{T\partial_r}{\nabla \g{T\partial_r}{\nabla \eta}}-\g{T\partial_r}{\nabla\eta}^2\Big)\dm.
\end{align*}
Since $\nabla_{\partial_r}T=0$ and $T\partial_r=\psi\partial_r$, we have that $\partial_r\psi=0$ and 
\begin{equation*}
\g{\partial_r}{\nabla \g{T\partial_r}{\nabla \eta}}=\partial_r\g{T\partial_r}{\nabla \eta}=\psi\ddot{\eta}.
\end{equation*}
Thus,
\begin{equation*}
\int_\Omega u^2_i \Big(2\g{T\partial_r}{\nabla \g{T\partial_r}{\nabla \eta}}-\g{T\partial_r}{\nabla\eta}^2\Big)\dm \leq \int_\Omega u^2_i \psi^2 \Big(2\ddot{\eta}-\dot{\eta}^2\Big)\dm\leq C_1,
\end{equation*}
where $C_1=\delta^2 \max_{\bar{\Omega}}(2\ddot{\eta}-\dot{\eta}^2)$. The previous estimate together with Propositions~\ref{lemmaboxestimate} and \ref{Prop-C0} immediately implies the inequalities of Theorem~\ref{thmA3-c3}.
\end{proof}

\section{Applications of Theorem~\ref{propCYT-c3}}
As with the other proofs in this paper we need to proceed in stages. First, we recall a result by McKean~\cite{McKean}, see alternatively Chavel~\cite{Chavel}. Here, we give a proof by combining our Theorem~\ref{propCYT-c3} with an approach of the spectrum of warped metrics.

\begin{lemma}\label{Ball-lemma}
Let $B(a)\subset \mathbb{H}^n(-\kappa^2)$ be an $n$-disk of radius $a>0$. The first eigenvalue $\lambda_1$ of the Laplacian on $B(a)$ with the Dirichlet boundary condition satisfies
\begin{eqnarray*}
\lim_{a\to +\infty}\lambda_1(B(a))=\frac{(n-1)^2}{4}\kappa^2.
\end{eqnarray*}
\end{lemma}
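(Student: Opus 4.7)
The plan is to prove both a matching lower bound and an asymptotic upper bound, and then squeeze. For the \emph{lower bound}, I would specialize Theorem~\ref{propCYT-c3} to $T=I$ (so $\varepsilon=\delta=1$) with constant $\eta$. Its ``Moreover'' clause gives
\[
\lambda_1(B(a))\geq\frac{(n-1)^2}{4}\kappa^{2}
\]
for every $a>0$. Combined with the standard domain monotonicity of the Dirichlet spectrum, which yields that $a\mapsto\lambda_1(B(a))$ is non-increasing, this ensures that $\lim_{a\to\infty}\lambda_1(B(a))$ exists and is bounded below by $(n-1)^2\kappa^{2}/4$.

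For the \emph{upper bound}, I would exploit the warped-product representation of $\mathbb{H}^n(-\kappa^2)$. In polar geodesic coordinates centered at the center of $B(a)$, the metric takes the form $dr^2+h(r)^2 g_{S^{n-1}}$ with $h(r)=\kappa^{-1}\sinh(\kappa r)$. Since $B(a)$ is rotationally symmetric, its positive first Dirichlet eigenfunction is $O(n)$-invariant and hence radial, so, following the warped-metric framework of~\cite{Marrocos-Gomes},
\[
\lambda_1(B(a))=\inf_{f}\frac{\int_0^a f'(r)^2\,h(r)^{n-1}\,dr}{\int_0^a f(r)^2\,h(r)^{n-1}\,dr},
\]
where the infimum is taken over smooth radial $f$ with $f(a)=0$ that are regular at the origin. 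It then suffices to exhibit an admissible family $f_a$ whose Rayleigh quotient converges to $(n-1)^2\kappa^2/4$.

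A natural candidate is the \emph{truncated exponential}
\[
f_a(r)=e^{-(n-1)\kappa r/2}-e^{-(n-1)\kappa a/2},
\]
which is smooth on $[0,a]$, vanishes at $r=a$, and is patterned on the decay rate of the formal asymptotic ground state of the limiting constant-coefficient ODE $-u''-(n-1)\kappa u'=\lambda u$. The identity driving the computation is
\[
e^{-(n-1)\kappa r}\sinh(\kappa r)^{n-1}=\Bigl(\tfrac{1-e^{-2\kappa r}}{2}\Bigr)^{n-1}\longrightarrow 2^{-(n-1)}\quad\text{as }r\to\infty,
\]
so that, after multiplying by the weight $h(r)^{n-1}$, both integrands in the Rayleigh quotient approach positive constant limits. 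I would then show that the leading orders of numerator and denominator grow linearly in $a$, with ratio precisely $(n-1)^2\kappa^{2}/4$, while the cross-term and square-term corrections generated by the additive constant $e^{-(n-1)\kappa a/2}$ contribute only $O(1)$ and become negligible as $a\to\infty$.

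The main obstacle I anticipate is the careful bookkeeping of lower-order contributions in these asymptotic expansions, together with controlling the boundary behavior of $f_a$ at $r=0$, where the weight $h(r)^{n-1}$ degenerates and where one must check that $f_a$ is genuinely admissible as a radial test function for the Laplacian on $B(a)$. Once this is in place, combining the sharp lower bound from Theorem~\ref{propCYT-c3} with the asymptotic upper bound $R[f_a]\to(n-1)^2\kappa^2/4$ yields the desired equality $\lim_{a\to\infty}\lambda_1(B(a))=(n-1)^2\kappa^2/4$.
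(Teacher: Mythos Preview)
Your proposal is correct and follows the same overall strategy as the paper: the lower bound from the ``Moreover'' clause of Theorem~\ref{propCYT-c3} (with $T=I$), and the upper bound via a radial test function in the polar warped-product model, then a squeeze. The difference lies in the choice of test function for the upper bound. The paper restricts to an outer annulus $(a/2,a)$, on which $\coth(\kappa r)\approx 1$, takes $\phi$ to solve the limiting constant-coefficient ODE $\ddot\phi+(n-1)\dot\phi+\tfrac{(n-1)^2}{4}\phi=0$ with Dirichlet data at the endpoints, extends by zero to $B(a)$, and controls the Rayleigh quotient through the decay of $|\coth(a/2)-1|$; this annular support automatically eliminates any issue at the origin. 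You instead use the explicit global profile $f_a(r)=e^{-(n-1)\kappa r/2}-e^{-(n-1)\kappa a/2}$ on all of $[0,a]$ and extract the limit via the identity $e^{-(n-1)\kappa r}\sinh^{n-1}(\kappa r)\to 2^{-(n-1)}$. Your approach is more directly computational and works straight in $\mathbb{H}^n(-\kappa^2)$ without rescaling, but requires the admissibility check at $r=0$ that you correctly flag (and which is harmless since the weight $h(r)^{n-1}\sim r^{n-1}$ kills any difficulty there for $n\ge 2$). Either route yields the same conclusion; the paper's annular trick is slightly cleaner in that it sidesteps the origin entirely, while yours gives a single closed-form trial function.
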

\begin{proof}
We start by proving the result of the lemma for the unity case $\mathbb{H}^n(-1)$. Recall that its metric in the polar coordinates is given by $\langle,\rangle=dr^2+\sinh^2rds_{n-1}^2$ on $(0,+\infty)\times S^{n-1}$. Thus, we can address the eigenvalue problem as in \cite[Section~2]{Marrocos-Gomes} from which we take $\mu_0=0$ and $\psi$ to be constant, so that our eigenvalue problem for the Laplacian becomes
\begin{equation*}
\ddot{\phi}(r)+(n-1)\coth(r)\dot{\phi}(r)+\lambda\phi(r)=0,
\end{equation*}
for some $\phi\in L^2\Big([\frac{a}{2},a],\sinh^{2(n-1)}(r)dr^2\Big)$. For our purpose, it is enough to consider $\lambda=\frac{(n-1)^2}{4}$ and the function $\coth(r)$ for large values of $r$. So, from now on, we are considering the solution of the ODE on $\big(\frac{a}{2},a\big)$ as follows
\begin{equation*}
\ddot{\phi}(r)+(n-1)\dot{\phi}(r)+\frac{(n-1)^2}{4}\phi(r)=0.
\end{equation*}
We define $f$ on $B(a)\subset\mathbb{H}^n(-1)$ given by $f(r,\theta)=\psi(r)$, where
\begin{equation*}
\psi(r)=\left\{
\begin{array}{ccc}
\phi(r)&\hbox{if} & r\in(\frac{a}{2},a),\\
0&& \hbox{otherwise}.
\end{array}
\right.
\end{equation*}
Then $f$ is an admissible function for the Dirichlet eigenvalue problem on the space $L^2(B(a),d\nu)$, where $d\nu=\sinh^{n-1}(r)dvol_{B(a)}$. Besides, $\psi$ satisfies
\begin{equation*}
\ddot{\psi}(r)+(n-1)\dot{\psi}(r)+\frac{(n-1)^2}{4}\psi=0 \quad\hbox{on}\quad \Big(\frac{a}{2},a\Big).
\end{equation*}
Now, we use integration by parts to obtain
\begin{eqnarray*}
\int_{B(a)}\dot{\psi}^2d\nu
= \int_{B(a)}\Big((n-1)\psi\dot{\psi}(1-\coth(r))+\frac{(n-1)^2}{4}\psi^2\Big)d\nu
\end{eqnarray*}
which implies  
\begin{equation*}
\int_{B(a)}\Big(\dot{\psi}^2 -\frac{(n-1)^2}{4}\psi^2\Big)d\nu
\leq \int_{B(a)}(n-1)|\psi| |\dot{\psi}||\coth(r)-1|d\nu
\end{equation*}
Note that, $\sup_{B(a)}|\coth(r)-1|=|\coth{(a/2)}-1|$  and $|\dot{\psi}|= |\nabla f|$ on $B(a)$. So, 
\begin{eqnarray*}
\Vert\nabla f\Vert^2 -\frac{(n-1)^2}{4}\Vert f\Vert^2
\leq (n-1)|\coth(a/2)-1| \Vert f\Vert \Vert \nabla f\Vert.
\end{eqnarray*}
The previous inequality reads as 
\begin{eqnarray*}
\frac{\Vert \nabla f\Vert^2}{\Vert f\Vert^2 }-(n-1)|\coth(a/2)-1|\frac{\Vert \nabla f\Vert}{\Vert  f\Vert} -\frac{(n-1)^2}{4}\leq 0.
\end{eqnarray*}
Whence, we get 
\begin{eqnarray*}
\frac{\Vert \nabla f\Vert}{\Vert f\Vert }\leq \frac{n-1}{2}|\coth(a/2)-1|+ \frac{1}{2}\Big((n-1)^2|\coth(a/2)-1|^2 +(n-1)^2 \Big)^\frac{1}{2}.
\end{eqnarray*}
By Rayleigh's theorem (see, e.g., Chavel~\cite{Chavel}) 
\begin{equation*}
\sqrt{\lambda(B(a))}\leq\frac{n-1}{2}|\coth(a/2)-1|+ \frac{1}{2}\Big((n-1)^2|\coth(a/2)-1|^2 +(n-1)^2 \Big)^\frac{1}{2}.
\end{equation*}
Thus, $\lim_{a\to+\infty}\lambda(B(a))\leq\frac{(n-1)^2}{4}.$ From this latter result and Theorem~\ref{propCYT-c3}, we get  $\lim_{a\to+\infty}\lambda(B(a))=\frac{(n-1)^2}{4}$. We now use the same argument as in the proof of Theorem~\ref{propCYT-c3} to conclude that $\lim_{a\to+\infty}\lambda(B(a))=\frac{(n-1)^2}{4}\kappa^2$ for the case of $\mathbb{H}^n(-\kappa^2).$
\end{proof}

\subsection{Proof of Corollary~\ref{newAproach}}
\begin{proof}
Here we are following the same steps of the proof given by Cheng-Yang~\cite[Corollary 1.3]{Cheng-Yang-III}. Let us consider into each bounded domain $\Omega\subset\mathbb{H}^n(-\kappa^2)$ an $n$-disk $B(a)$ of radius $a>0$. Thus, from the domain monotonicity of eigenvalues (see, e.g., Chavel~\cite{Chavel}), Lemma~\ref{Ball-lemma} and Theorem~\ref{propCYT-c3}, we have
\begin{eqnarray}\label{AuxEq1proofnewAproach}
\lambda_1(\Omega)\geq \frac{(n-1)^2}{4}\kappa^2\quad \hbox{and}\quad
\lim_{\Omega\to \mathbb{H}^n(-\kappa^2)}\lambda_1(\Omega)=\frac{(n-1)^2}{4}\kappa^2.
\end{eqnarray}
Moreover, it is clear that 
\begin{equation*}
\lambda_i(\Omega)>\lambda_1(\Omega)\geq \frac{(n-1)^2}{4}\kappa^2\quad \forall i>1.
\end{equation*}
Note that for the Laplacian case, we can work with $\upsilon_i:=4\lambda_i -(n-1)^2\kappa^2$, see Theorems~\ref{propCYT-c3} and~\ref{StandarEstimate-CYO}. Since $\upsilon_1=4\lambda_1 -(n-1)^2\kappa^2$, then from \eqref{AuxEq1proofnewAproach} we obtain $\upsilon_1(\Omega)\to 0$ as $\Omega \to \mathbb{H}^n(-\kappa^2)$. 
On the other hand, from \eqref{2-Aux-Num}, we get $\upsilon_{i+1}\leq 5i^2\upsilon_1$ $\forall i\geq1$. Consequently,
\begin{equation*}
0=\lim_{\Omega\to \mathbb{H}^n(-\kappa^2)}\upsilon_{i+1}(\Omega)=\lim_{\Omega\to \mathbb{H}^n(-\kappa^2)}\big(4\lambda_{i+1}(\Omega) -(n-1)^2\kappa^2\big)
\end{equation*}
which is enough to complete the proof of the corollary.
\end{proof}

\section{The fundamental gap for a class of operators on a class of convex domains in two-dimensional hyperbolic space}\label{secaogap}
In this section, we address the fundamental gap for a certain class of operators $\Lu$ defined as in~\eqref{defLintro}, more precisely, we estimate the difference $\lambda_2(\Omega)-\lambda_1(\Omega)$ between the first two eigenvalues of the eigenvalue problem for a special family of $\Lu$ on a class of convex bounded domains $\Omega$ with the Dirichlet boundary condition. In the first part, we are working with the class of the operators $\Lu = \dv(\varphi\nabla u),$ for some radially constant function $\varphi$, satisfying $\varepsilon\leq\varphi\leq\delta,$ for some positive constants $\varepsilon$ and $\delta.$ 

We begin with a brief historical background. The solution for the problem $y''(t)+\lambda y(t)=0$ in $ (0,\ell),$ with $\lambda>0$ and $y(0)=y(\ell)=0$, is given by $y(t)=\sum_n(A_n\sin(\sqrt{\lambda_n}t)+B_n\cos{\sqrt{\lambda_n}t}),$ where $\lambda_n=(n\pi)^2/\ell^2 $, hence, $\lambda_2-\lambda_1=3\pi^2/\ell^2.$ This gap motivates us to think about the more general case of the Laplacian in convex bounded domains $\Omega\subset\mathbb{R}^n$. For this case, it was observed in the 80's by Michiel van den  Berg~\cite{MvdB} that for many convex domains, $\lambda_2-\lambda_1\geq3\pi^2/D^2,$ where $D$ is the diameter of $\Omega.$ It was also independently suggested by Ashbaugh and Benguria~\cite{AB}, and Yau~\cite{Y1} that this estimate of the gap remains true for any convex bounded domain in $\mathbb{R}^n$. It has been known as the \emph{fundamental gap conjecture.} For the case of non-convex domains, it is known that the fundamental gap has no such a lower bound, and for non-connected cases, the gap may vanish.

In 2011, Andrews and Clutterbuck~\cite{Andrews} proved such conjecture and suggested that it remains valid for the case of constant curvature spaces. In 2019, Seto, Wang and Wei~\cite{Seto} proved this new conjecture for convex domains with diameter $D\leq \pi/2$ in unit sphere $\mathbb{S}^n$, $n\geq3$. In 2020, He, Wei and Zhang~\cite{He} extended the Seto-Wang-Wei's result to convex domains with diameter $D<\pi$ in unit sphere $\mathbb{S}^n$, $n\geq3$. In 2021, Dai, Seto and Wei~\cite{Dai} proved the conjecture for any convex bounded domains in $\mathbb{S}^2$. 

However, the fundamental gap for the Laplacian behaves differently in negatively curved spaces. Indeed, Bourni et al.~\cite{Bourni} constructed convex domains in $\mathbb{H}^2(-1)$, with diameter $D$, such that $(\lambda_2-\lambda_1)D^2< 3\pi^2$, and as the quantity $(\lambda_2-\lambda_1)D^2$ is invariant under the scaling of the metric, this same result also holds for any simply connected negative constant curvature space forms. Very recently, for the more general case of convex domains in $\mathbb{H}^n(-1)$, $n\geq2$, the same authors proved that $(\lambda_2-\lambda_1)D^2$ can be arbitrarily small for domains of any diameter, see Bourni et al.~\cite[Theorem~1.1]{Bourni1}. 

The essence of these results motivated us to work on the behavior of the fundamental gap for some case of our operator $\Lu$ on convex bounded domains in hyperbolic space. Here, we find a simple class of radially parallel $(1,1)-$tensors $T$ and drifting functions $\eta$ that define the operator $\Lu$ in \eqref{defLintro}, to answer positively the following two questions:

\emph{Is there some convex domain in hyperbolic space for which the fundamental gap for the operator $\dv(\varphi\nabla u)$ satisfies: $(\lambda_2-\lambda_1)D^2< 3\pi^2\delta$?}

\emph{ Is there some drifting function $\eta$ for which the fundamental gap for the operator $\dv(\varphi\nabla u)-\g{\nabla \eta}{\varphi\nabla u}$ still satisfies the inequality $(\lambda_2-\lambda_1)D^2< 3\pi^2\delta$ on some convex domain in hyperbolic space?}

As we mentioned before, the first question has been motivated by the work of Bourni et al.~\cite{Bourni} that settled the case when $\varphi$ is constant. Here, to suit our case, we adapt the latter method, furthermore, we also provide some generalizations in comparison with the current literature.

Due to invariance of the quantity $(\lambda_2-\lambda_1)D^2$, we can work, without loss of generality, on the hyperbolic space $\mathbb{H}^2(-1)$ with constant curvature~$-1$, namely, the open half space $y>0$ with the metric $g_{ij}=y^{-2}\delta_{ij}$. Such a metric in the coordinates $x=r\cos\theta$ and $y=r\sin\theta$, where $r>0$ and $0<\theta<\pi,$ is written as follows
\begin{equation}\label{metrica}
ds^2 =\frac{1}{\sin^2\theta}\frac{dr^2}{r^2}+\frac{d\theta^2}{\sin^2\theta}
\end{equation}
so that $\{e_1=r\sin\theta \frac{\partial}{\partial r},\, e_2=\sin\theta \frac{\partial}{\partial \theta}\}$ is an orthonormal frame, whose the nonzero Christoffel symbols are $\Gamma^2_{11}= -\Gamma^1_{12}=-\Gamma^1_{21}= \cos\theta$.  Note that we can define an $(1,1)-$tensor $T$ on $\mathbb{H}^2(-1)$ by $T=\varphi I$, for $\varphi\in C^{\infty}(\mathbb{H}^2)$, with $\varepsilon\leq\varphi\leq\delta$, to be appropriately chosen.

First we compute $\dv(\varphi\nabla u)$ of a smooth function $u$ on $\mathbb{H}^2(-1)$. For this, we set $\nabla u=u_1e_1+u_2e_2$, where $u_1=e_1(u)=r\sin \theta u_r$ and $u_2=e_2(u)=\sin\theta u_\theta$. Hence,
\begin{eqnarray*}
\g{\nabla_{e_1}\nabla u}{e_1}&=&(u_{11}+u_2\Gamma^1_{12})\,=\, r^2\sin^2\theta u_{rr}+r\sin^2\theta u_r-\sin\theta\cos\theta u_\theta,\\
\g{\nabla_{e_2}\nabla u}{e_2}&=&u_{22}\,=\,\sin^2\theta u_{\theta\theta}+\sin\theta\cos\theta u_\theta.
\end{eqnarray*}
Thus,
\begin{eqnarray*}
\nonumber\Delta u&=&\g{\nabla_{e_1}\nabla u}{e_1}+\g{\nabla_{e_2}\nabla u}{e_2}\\
\nonumber&=&r^2\sin^2\theta u_{rr}+r\sin^2\theta u_r-\sin\theta\cos\theta u_\theta+\sin^2\theta u_{\theta\theta}+\sin\theta\cos\theta u_\theta\\
&=&r^2\sin^2\theta u_{rr}+r\sin^2\theta u_r+\sin^2\theta u_{\theta\theta}.
\end{eqnarray*}
Besides, $\varphi_1=e_1(\varphi)=r\sin \theta \varphi_r$ and $\varphi_2=e_2(\varphi)=\sin\theta \varphi_\theta,$ thus
\begin{equation}\label{PI-quadrado}
\g{\nabla u}{\nabla \varphi}=u_1\varphi_1+u_2\varphi_2=r^2\sin^2\theta \varphi_ru_r+\sin^2\theta \varphi_\theta u_\theta,
\end{equation}
and recall that
\begin{equation}\label{quadrado T geral}
    \dv(\varphi\nabla u)=\varphi\Delta u+\g{\nabla \varphi}{\nabla u}.
\end{equation}
Under the additional assumption of $T$ be radially parallel, we get $\varphi_r=0$, whence equation~\eqref{quadrado T geral} becomes
\begin{equation}\label{opchengyau}
\dv(\varphi\nabla u)=\varphi(r^2\sin^2\theta u_{rr}+r\sin^2\theta u_r)+\varphi\sin^2\theta u_{\theta\theta}+\sin^2\theta \varphi_\theta u_\theta,
\end{equation}
which completes the first part.

Now, we are working on the fundamental gap for $\dv(\varphi\nabla u)$ in \eqref{opchengyau}. We start with the following eigenvalue problem with the Dirichlet boundary condition:
\begin{align}\label{eq4chengyau}
\varphi(r^2\sin^2\theta u_{rr}+r\sin^2\theta u_r)+\varphi\sin^2\theta u_{\theta\theta}+\sin^2\theta\varphi_\theta u_\theta+\lambda u&=0, \quad \mbox{in} \quad \Omega,\\
\nonumber u&=0, \quad \mbox{on}\quad \partial \Omega.
\end{align}

For each $\ell > 0$, $\theta_0\in (0,\frac{\pi}{2})$ and $\theta_1\in (\frac{\pi}{2},\pi)$ consider the family of domains $\Omega_{\ell,\theta_0,\theta_1}=\{(r,\theta)\,:\,1<r<e^\frac{\pi}{\ell}\,\,\mbox{and}\,\, \theta_0<\theta <\theta_1\}$, see Figure~\ref{fig1}. As the geodesics are the vertical lines $x = \ell $ and semicircles centered on the axis $x$, then the sets $ \Omega_{\ell,\theta_0,\theta_1}$ are convex domains in $\mathbb{H}^2(-1).$
\begin{figure}[!htb]
\centering
\includegraphics[scale=0.4]{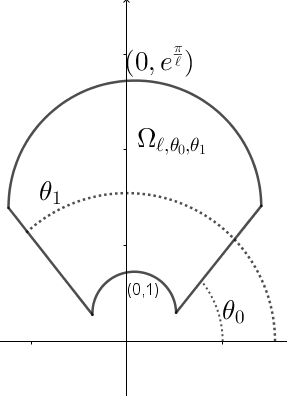}
\caption{$\Omega_{\ell,\theta_0,\theta_1} = \{(r,\theta)\,:\, 1 < r < e^\frac{\pi}{\ell} \quad \mbox{and} \quad\theta_0 <\theta < \theta_1\}.$}
\label{fig1}
\end{figure}
Since the metric in~\eqref{metrica} is a warped product, we can use the method of separating variables (see, e.g., \cite[page~41]{Chavel}), from which we write
$u(r,\theta)=f(r)h(\theta),$ so that $u_{r}=f_{r}h$, $u_{rr}=f_{rr}h$, $u_{\theta}=fh_{\theta}$ and $u_{\theta\theta}=fh_{\theta\theta}$. Hence, from Problem~\eqref{eq4chengyau}, we get 
\begin{equation}
(r^2f_{rr} +rf_r)h+(h_{\theta\theta}+\frac{\varphi_\theta}{\varphi}h_\theta +\frac{\lambda}{\varphi} \csc^2\theta h)f=0.
\end{equation}
Since $f$ depends only on $r$, and $h$ depends only on $\theta$, there exists a constant $\mu$ such that we can interchange this problem by the following two eigenvalue problems
\begin{eqnarray}
\label{eq1chengyau} r^2f_{rr}+rf_r=-\mu f, \quad r\in (1,e^\frac{\pi}{\ell})\\
\label{eq2chengyau} h_{\theta\theta}+\frac{\varphi_\theta}{\varphi}h_\theta +\frac{\lambda}{\varphi} \csc^2\theta h=\mu h, \quad \theta\in (\theta_0,\theta_1),
\end{eqnarray}
with the Dirichlet bounded conditions $f(1)=f(e^\frac{\pi}{\ell})=0$ and $h(\theta_0)=h(\theta_1)=0$, respectively. Besides, since $1<r<e^\frac{\pi}{\ell}$, we can make
the change of variable $f(r)=f(e^t),$ for $0< t <\frac{\pi}{\ell}.$ Whence, $f_t=rf_r$ and $f_{tt}= rf_r+ r^2f_{rr}$. So, from~\eqref{eq1chengyau}, we obtain
\begin{equation}\label{eq3chengyau}
f_{tt}=-\mu f, \quad  t\in \big(0,\frac{\pi}{\ell}\big),
\end{equation}
and, thus, the Dirichlet bounded condition guarantees that $f (t) = \sin (\sqrt{\mu} t)$, with $\mu = (k\ell)^2>0$, where $k$ is a nonzero integer, i.e., $f (t) = \sin (k\ell t)$. While Eq.~\eqref{eq2chengyau} is rewritten as  
\begin{equation}\label{eq2chengyau-Equiv}
-(\varphi h_\theta)_\theta+\mu \varphi h=\lambda \csc^2\theta h. 
\end{equation}

\subsection{Identifying the first two eigenvalues}\label{Id.autovalores}
For the sake of completeness, we start this section by stating the Courant results for nodal domains, the Sturm comparison theorem for Jacobi equations, and the Sturm-Liouville theorem.

\begin{theorem}[Courant results for nodal domains, see, e.g., ~\cite{AHenrot}, p.~14-15]\label{teorsinalu_1} 
The first eigenfunction $u_1$ of an second order elliptic differential operator with Dirichlet boundary condition is positive in $\Omega$ when $\Omega\subset\mathbb{R}^n$ is connected; the first eigenvalue $\lambda_1$ has multiplicity equal to 1; and the second eigenfunction $u_2$ has precisely $2$ nodal domains. Moreover, $\lambda_1 $ is characterized as being the only eigenvalue with eigenfunction of constant sign.
\end{theorem}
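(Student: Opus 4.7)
The plan is to combine the Rayleigh--Ritz variational characterization of the eigenvalues with the strong maximum principle for second-order elliptic operators and $L^{2}$-orthogonality of distinct eigenspaces; the assertion on the nodal structure of $u_{2}$ then follows from Courant's nodal domain theorem. Throughout I would work with the self-adjoint realization of the operator under the Dirichlet boundary condition and appeal to elliptic regularity so that all eigenfunctions are smooth in $\Omega$.

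\emph{Positivity of $u_{1}$ and simplicity of $\lambda_{1}$.} If $u_{1}$ changed sign, then both $u_{1}^{\pm}$ would lie in $H^{1}_{0}(\Omega)$ and $|u_{1}|$ would attain the same Rayleigh quotient as $u_{1}$, hence would itself be a first eigenfunction. The strong maximum principle, applied on the connected open set $\Omega$, would then force $|u_{1}|>0$ in $\Omega$, contradicting the vanishing of $u_{1}$ on its nodal set. For simplicity, suppose $u_{1}$ and $v_{1}$ are linearly independent first eigenfunctions; by the previous step both may be taken strictly positive, and some nontrivial combination $w=\alpha u_{1}+\beta v_{1}$ is $L^{2}$-orthogonal to $u_{1}$. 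The same positivity argument applied to $w$ forces $w$ to be of constant sign, but $\int_{\Omega} w\,u_{1}=0$ with $u_{1}>0$ then forces $w\equiv 0$, a contradiction.

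\emph{Two nodal domains for $u_{2}$ and characterization of $\lambda_{1}$.} Orthogonality $\int_{\Omega} u_{2}\,u_{1}=0$ together with $u_{1}>0$ forces $u_{2}$ to change sign, yielding at least two nodal domains. For the upper bound I would invoke Courant's theorem: were $u_{k}$ to possess $m\geq k+1$ nodal domains $N_{1},\ldots,N_{m}$, then the functions $u_{k}\chi_{N_{j}}$, $j=1,\ldots,k$ (each lying in $H^{1}_{0}(\Omega)$) span a $k$-dimensional subspace, inside which one picks a nonzero $v$ orthogonal to $u_{1},\ldots,u_{k-1}$; a direct computation shows the Rayleigh quotient of $v$ equals $\lambda_{k}$, so by min-max $v$ is itself a $k$-th eigenfunction, yet $v$ vanishes on the nonempty open set $N_{k+1}$, contradicting the unique continuation principle. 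Specialising to $k=2$ yields exactly two nodal domains. Finally, if any eigenfunction $u_{k}$ with $k\geq 2$ had constant sign, then $\int_{\Omega} u_{k}\,u_{1}=0$ combined with $u_{1}>0$ would force $u_{k}\equiv 0$; hence $\lambda_{1}$ is the only eigenvalue admitting a constant-sign eigenfunction.

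\emph{Expected obstacle.} The genuine technical crux is the Courant bound itself: one must justify that each $u_{k}\chi_{N_{j}}$ belongs to $H^{1}_{0}(\Omega)$ (via a trace argument on $\partial N_{j}$ using that $u_{k}$ vanishes there) and then invoke a strong unique continuation principle to rule out a nontrivial eigenfunction vanishing on an open set. Both ingredients are classical for sufficiently regular linear elliptic operators, which is presumably the reason the authors cite a standard reference rather than reproducing the argument here.
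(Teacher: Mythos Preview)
Your proposal is a correct sketch of the classical argument, and you have in fact anticipated the situation exactly: the paper does not give its own proof of this statement at all. Theorem~\ref{teorsinalu_1} is simply stated with a citation to Henrot~\cite{AHenrot}, pp.~14--15, and is used as a black box in the identification of the first two eigenvalues in Section~\ref{Id.autovalores}. There is therefore nothing in the paper to compare your argument against; your outline (Rayleigh quotient plus strong maximum principle for positivity and simplicity, orthogonality to $u_{1}$ for the sign change of $u_{2}$, Courant's min--max/unique-continuation argument for the upper bound on nodal domains) is the standard route and matches what one finds in the cited reference.
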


\begin{theorem}[Sturm comparison theorem,  see~\cite{Jorge}, p. 104] \label{teosturn}
Let $f_i$ be non-trivial solutions of
\begin{equation*}
(p(x)f_i'(x))'+b_i(x)f_i(x) = 0, \quad \mbox{for}\quad i=1,2,
\end{equation*}
where $0<p\in C^1$, $b_1$ and $b_2$ are continuous functions, and $b_1\geq b_2$ for all $x$. If $x_1<x_2$ are two consecutive zeros of $f_2$, then $f_1$ has at least one zero in  $(x_1,x_2)$, unless $b_1(x)=b_2(x)$ and $f_2(x)=kf_1(x)$, $k \in \mathbb{R}$.
\end{theorem}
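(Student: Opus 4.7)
The plan is to argue by contradiction via a modified Wronskian, which is the standard device for comparing solutions of two Sturm–Liouville equations. First I would suppose $f_1$ has no zero in $(x_1,x_2)$; then, replacing $f_i$ by $-f_i$ if necessary, I may assume $f_1>0$ on $(x_1,x_2)$, and since $x_1<x_2$ are consecutive zeros of $f_2$, I may likewise assume $f_2>0$ on $(x_1,x_2)$. I would then introduce
\begin{equation*}
W(x) := p(x)\bigl(f_1(x)f_2'(x) - f_2(x)f_1'(x)\bigr).
\end{equation*}

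Next I would differentiate $W$ and substitute the two ODEs $(pf_i')' = -b_i f_i$. A direct calculation gives
\begin{equation*}
W'(x) = f_1(x)[p(x)f_2'(x)]' - f_2(x)[p(x)f_1'(x)]' = \bigl(b_1(x)-b_2(x)\bigr)f_1(x)f_2(x) \geq 0
\end{equation*}
on $(x_1,x_2)$, because $b_1 \ge b_2$ there and both $f_i$ are positive. Hence $W$ is nondecreasing on $[x_1,x_2]$.

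The third step would be to evaluate $W$ at the endpoints. Since $f_2(x_1)=f_2(x_2)=0$ and $f_2$ is a nontrivial solution of a second-order linear ODE, uniqueness forces $f_2'(x_i)\neq 0$; combined with the positivity of $f_2$ in the interior, this yields $f_2'(x_1)>0$ and $f_2'(x_2)<0$. Since $f_1(x_i)\ge 0$, we obtain $W(x_1)\ge 0 \ge W(x_2)$, and the monotonicity $W(x_1)\le W(x_2)$ forces $W\equiv 0$ on $[x_1,x_2]$. The identity $W'\equiv 0$ then gives $(b_1-b_2)f_1f_2\equiv 0$, hence $b_1\equiv b_2$ on $(x_1,x_2)$ and then globally by continuity and uniqueness; meanwhile $W\equiv 0$ combined with $f_1>0$ in the interior gives $(f_2/f_1)'\equiv 0$, so $f_2=kf_1$ on $(x_1,x_2)$, and uniqueness for the linear ODE propagates this proportionality to the entire common domain, placing us squarely in the exceptional branch of the conclusion.

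The step I expect to be the main obstacle is the boundary book-keeping in the sign analysis of $W$: if $f_1(x_1)=0$ or $f_1(x_2)=0$, the inequalities $W(x_1)\ge 0$ and $W(x_2)\le 0$ degenerate to equalities already at the endpoints and need to be strengthened. I would handle this by using the uniqueness of solutions to the linear ODE to ensure $f_1'(x_i)\neq 0$ at any interior zero, and then a first-order Taylor expansion of $f_1$ and $f_2$ near $x_i$ to either extract a strict sign for $W(x_i)$ or absorb the borderline configuration directly into the equality branch $f_2=kf_1$.
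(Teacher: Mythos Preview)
The paper does not supply its own proof of this statement; it is quoted as a classical result with a citation to Sotomayor's textbook. Your argument is the standard modified-Wronskian proof of Sturm's comparison theorem and is essentially correct.

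Two minor remarks. First, from $W'\equiv 0$ you obtain $b_1=b_2$ only on $[x_1,x_2]$; your phrase ``and then globally by continuity and uniqueness'' is unjustified, since $b_1,b_2$ are arbitrary continuous functions and there is no uniqueness principle that propagates their equality beyond the interval. The exceptional clause in the theorem should be read on $[x_1,x_2]$, which is all that is needed where the paper applies it. Second, the endpoint case $f_1(x_i)=0$ that you flag as the main obstacle is in fact already absorbed by your argument: the chain $W(x_1)\ge 0\ge W(x_2)$ together with the monotonicity $W(x_1)\le W(x_2)$ forces $W(x_1)=W(x_2)=0$ and hence $W\equiv 0$ in every case, so no separate Taylor analysis is required.
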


We now highlight the well-known eigenvalue problem of Sturm-Liouville.
\begin{eqnarray}\label{problssturmliouv}
\nonumber -(py')'+qy&=&\lambda \rho y\,\,\mbox{on}\,\, [a,b],\\
y(a)=y(b)&=&0,
\end{eqnarray}
with the coefficients $p$ and $q$, and the weight function $\rho$ satisfying:
\begin{eqnarray}\label{condtionsturmlioupro}
\nonumber p\in C^1[a,b],\,\,q,\,\,\rho\in C[a,b],\\
p(x)\geq d>0 \,\,\mbox{and}\,\, q(x)+c_2\rho(x)\geq 0\,\,\mbox{for all}\,\,x\in[a,b],\\
\nonumber\rho(x)>0\,\, \mbox{for all}\,\, x\in (a,b).
\end{eqnarray}
\begin{theorem}[Sturm-Liouville, see, e.g.,~\cite{Kielhofer}, p.~174-175] \label{teosturm1} Under the hypotheses~\eqref{condtionsturmlioupro}, the Sturm-Liouville eigenvalue problem~\eqref{problssturmliouv} possesses infinitely many linear independent eigenfunctions $u_n\in C^2[a,b]$ with eigenvalues $\lambda_n\in\mathbb{R}$, which satisfy 
\begin{eqnarray*}
\int_a^b\rho u_nu_m=\delta_{nm} \quad \mbox{and}\quad\lambda_1<\lambda_2<\lambda_3<\cdots \to \infty.
\end{eqnarray*}
Moreover, each of the eigenvalue $\lambda_n$ not only have geometric multiplicity one, but by symmetry also algebraic multiplicity one and the $n^{th}$ eigenfunction $u_n$ of a Sturm-Liouville eigenvalue problem~\eqref{problssturmliouv} has at most $n-1$ simple zeros in $(a,b).$
\end{theorem}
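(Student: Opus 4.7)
The plan is to recast Problem \eqref{problssturmliouv} as a spectral problem for a compact self-adjoint operator on $L^2([a,b],\rho\,dx)$, and then use one-dimensional ODE theory for the fine structure (multiplicity and nodal properties). First, since $q + c_2\rho \geq 0$, I would shift the spectral parameter by a constant $c \geq c_2$ so that the bilinear form
\begin{equation*}
B[u,v] := \int_a^b \bigl( p\,u'v' + (q+c\rho)\,uv \bigr)\,dx
\end{equation*}
is coercive on $H^1_0([a,b])$ (using $p \geq d > 0$). Lax--Milgram then produces, for each $f \in L^2([a,b],\rho\,dx)$, a unique weak solution $u = Kf \in H^1_0$ of $-(pu')' + (q+c\rho)u = \rho f$, and elliptic (or direct ODE) regularity promotes $u$ to $C^2[a,b]$.

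Next I would verify that $K$ is a compact self-adjoint operator on $L^2([a,b],\rho\,dx)$. Compactness follows from the compact embedding $H^1([a,b]) \hookrightarrow L^2([a,b])$ valid in one dimension, while self-adjointness follows by symmetry of $B$ after integration by parts using the Dirichlet conditions. The spectral theorem then yields an orthonormal basis of eigenfunctions $\{u_n\} \subset L^2([a,b],\rho\,dx)$ with positive eigenvalues $\mu_n \to 0^+$; setting $\lambda_n = \mu_n^{-1} - c$ gives the discrete real spectrum $\lambda_1 < \lambda_2 < \cdots \to \infty$ and the orthogonality relation $\int_a^b \rho\,u_n u_m = \delta_{nm}$.

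For the multiplicity statement, I would argue by ODE uniqueness: any two classical solutions of $-(py')' + (q - \lambda \rho)y = 0$ with the same value and derivative at $a$ agree; hence the subspace satisfying $y(a)=0$ is one-dimensional, forcing geometric multiplicity one. Algebraic multiplicity one then follows from the self-adjointness of $K$ (geometric and algebraic multiplicities coincide for compact self-adjoint operators).

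The hard part is the nodal count: the $n$-th eigenfunction $u_n$ has at most $n-1$ zeros in $(a,b)$, all simple. Simplicity is immediate, since a double zero $u_n(x_0)=u_n'(x_0)=0$ would force $u_n\equiv 0$ by ODE uniqueness. For the count, I would invoke the Sturm comparison theorem (Theorem~\ref{teosturn}) applied to two solutions corresponding to distinct eigenvalues $\lambda_m < \lambda_n$: between two consecutive zeros of $u_m$, the function $u_n$ must vanish. Letting $N(\lambda)$ denote the number of zeros in $(a,b)$ of the initial-value solution of $-(py')' + (q-\lambda\rho)y=0$ with $y(a)=0$, $y'(a)=1$, a standard continuity-plus-monotonicity argument (via Sturm comparison) shows $N(\lambda)$ is non-decreasing in $\lambda$ and jumps by exactly one at each eigenvalue (precisely when the solution also vanishes at $b$). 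Since $u_1$ has constant sign (no interior zeros, as in Theorem~\ref{teorsinalu_1}), induction yields $N(\lambda_n) = n-1$, giving the desired bound. The continuity-monotonicity of $N(\lambda)$ is the most delicate technical point and must be justified by controlling how zeros enter or leave the interval as $\lambda$ varies, which is where the hypotheses $p > 0$ and $\rho > 0$ on $(a,b)$ are essential.
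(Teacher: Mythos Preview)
The paper does not prove this theorem at all: it is stated as a classical result with a citation to Kielh\"ofer's textbook (pp.~174--175) and is used as a black box in Section~\ref{secaogap}. There is therefore no ``paper's own proof'' to compare against.

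Your outline is the standard functional-analytic route and is essentially correct. A few small remarks: the hypotheses allow $\rho$ to vanish at the endpoints $a,b$ (only $\rho>0$ on the open interval is assumed), so when you invoke the compact embedding you should note that $H^1_0([a,b])\hookrightarrow C([a,b])$ is compact and $C([a,b])\hookrightarrow L^2([a,b],\rho\,dx)$ is continuous since $\rho$ is bounded; this handles the possibly degenerate weight. Your appeal to Theorem~\ref{teorsinalu_1} for the sign of $u_1$ is a slight mismatch of contexts (that theorem is stated for domains in $\mathbb{R}^n$), but the conclusion is easily recovered either from the variational characterization of $\lambda_1$ or directly from your own nodal-count induction with base case $N(\lambda_1)=0$. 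Finally, you actually prove that $u_n$ has \emph{exactly} $n-1$ interior zeros, which is stronger than the ``at most $n-1$'' stated in the paper; this is the classical Sturm oscillation theorem and is what Kielh\"ofer proves as well.
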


According to Theorem~\ref{teorsinalu_1}, the first eigenvalue $\lambda_1$ of Problem~\eqref{eq4chengyau} on $\Omega_{\ell,\theta_0,\theta_1}$ is a strictly positive eigenfunction. Thus, $\mu=\ell^2$, because $f>0$ on $(0,\frac{\pi}{\ell})$, besides, $h>0$ on $(\theta_0,\theta_1)$, and from Theorem~\ref{teosturm1}, we have that $\lambda_1$ is the smallest $\lambda^{\ell^2}_1$ that solves the following problem:
\begin{eqnarray}\label{eq.8}
-(\varphi h_{\theta})_\theta +\ell^2\varphi h &=& \lambda\csc^2\theta h,\quad  \theta \in (\theta_0,\theta_1),\\
\nonumber h(\theta_0)=h(\theta_1)&=&0.
\end{eqnarray}
Again from Theorem~\ref{teorsinalu_1}, we know that $\lambda_2$ is an eigenfunction that changes sign only once, so, $f$ or $h$ have to change of sign. 
 
If $f$ change of sign, then $\mu = 4\ell^2$ and $f(t)=\sin(2\ell t)$ in Problem~\eqref{eq3chengyau}. In this case, $h>0$, and thus, Theorem~\ref{teosturm1} guarantees that $\lambda_2$ is the smallest $\lambda^{4\ell^2}_1$ that solves the problem
\begin{eqnarray}\label{eq.9}
-(\varphi h_{\theta})_\theta + 4\ell^2\varphi &=& \lambda\csc^2\theta h, \quad \theta\in (\theta_0,\theta_1)\\
\nonumber h(\theta_0)=h(\theta_1)&=&0.
\end{eqnarray}

If $h$ change of sign, then $f$ is positive, and given by $\sin(\ell t)$, with $\mu = \ell^2$. In this case, $\lambda_2$ is given by $\lambda^{\ell^2}_2$ solving~\eqref{eq.8} with $h$ changing of sign exactly once, because Theorem~\ref{teosturm1}. Hence, the second eigenvalue is:
\begin{equation}\label{segundoautovalor}
    \lambda_2 = \min\{\lambda^{4\ell^2}_1, \lambda^{\ell^2}_2\}.
\end{equation}

\subsection{Estimates on the first and second eigenvalues}\label{sec3}
In this section, we will compute estimates for the first two eigenvalues for the operator in~\eqref{opchengyau} with the Dirichlet boundary condition on a domain to be appropriately chosen. We begin by defining the angle:
\begin{equation*}
\theta_\divideontimes=\min(\theta_0,\pi-\theta_1).
\end{equation*}
Note that $\csc^2\theta$ is a decreasing function on $(0,\pi/2]$ and increasing on $[\pi/2,\pi)$, so that  $1\leq \csc^2\theta \leq \csc^2(\theta_\divideontimes)$, for all $\theta \in [\theta_0,\theta_1]$. 
\begin{lemma}\label{lema1}
The first eigenvalue $\lambda^\mu_1$ of Problem~\eqref{eq2chengyau} satisfies
\begin{equation}\label{eq.10}
\varepsilon \sin^2(\theta_\divideontimes)\Big(\mu + \frac{\pi^2}{(\theta_1-\theta_0)^2}\Big)\leq \lambda^\mu_1\leq \delta\Big(\mu + \frac{\pi^2}{(\theta_1-\theta_0)^2}\Big).
\end{equation}
\end{lemma}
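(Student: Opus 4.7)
The natural tool here is the variational (Rayleigh) characterization of the first eigenvalue of the Sturm--Liouville problem~\eqref{eq2chengyau-Equiv}, namely
\begin{equation*}
\lambda_1^{\mu}=\inf_{0\neq h\in H^1_0(\theta_0,\theta_1)}\frac{\displaystyle\int_{\theta_0}^{\theta_1}\bigl(\varphi\, h_\theta^{\,2}+\mu\,\varphi\, h^2\bigr)\,d\theta}{\displaystyle\int_{\theta_0}^{\theta_1}\csc^2\theta\; h^2\,d\theta}.
\end{equation*}
Both bounds in \eqref{eq.10} will follow by sandwiching the weights in this quotient using the hypotheses $\varepsilon\le\varphi\le\delta$ and the elementary inequality $1\le\csc^2\theta\le\csc^2\theta_\divideontimes$ for $\theta\in[\theta_0,\theta_1]$, and then recognizing the resulting (constant-coefficient) Rayleigh quotient as the one for the Dirichlet problem $-h''=\nu h$ on $(\theta_0,\theta_1)$, whose first eigenvalue is $\pi^2/(\theta_1-\theta_0)^2$.

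For the upper bound I would use the test function $h_0(\theta)=\sin\!\bigl(\pi(\theta-\theta_0)/(\theta_1-\theta_0)\bigr)$. Since $h_0\in H^1_0(\theta_0,\theta_1)$, bounding $\varphi\le\delta$ in the numerator and $\csc^2\theta\ge 1$ in the denominator gives
\begin{equation*}
\lambda_1^{\mu}\le \delta\,\frac{\int_{\theta_0}^{\theta_1}\bigl(h_{0,\theta}^{\,2}+\mu\, h_0^{\,2}\bigr)\,d\theta}{\int_{\theta_0}^{\theta_1}h_0^{\,2}\,d\theta} = \delta\Bigl(\mu+\frac{\pi^2}{(\theta_1-\theta_0)^2}\Bigr),
\end{equation*}
because the first eigenfunction $h_0$ realizes $\int h_{0,\theta}^{\,2}/\int h_0^{\,2}=\pi^2/(\theta_1-\theta_0)^2$.

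For the lower bound I bound the weights in the opposite direction: $\varphi\ge\varepsilon$ in the numerator and $\csc^2\theta\le\csc^2\theta_\divideontimes$ in the denominator. This yields, for every admissible $h$,
\begin{equation*}
\frac{\int_{\theta_0}^{\theta_1}\bigl(\varphi\, h_\theta^{\,2}+\mu\,\varphi\, h^2\bigr)\,d\theta}{\int_{\theta_0}^{\theta_1}\csc^2\theta\; h^2\,d\theta}\;\ge\;\varepsilon\sin^2\theta_\divideontimes\,\frac{\int_{\theta_0}^{\theta_1}\bigl(h_\theta^{\,2}+\mu\, h^2\bigr)\,d\theta}{\int_{\theta_0}^{\theta_1}h^2\,d\theta}.
\end{equation*}
Taking the infimum over $h\in H^1_0(\theta_0,\theta_1)$, the right-hand side becomes $\varepsilon\sin^2\theta_\divideontimes\bigl(\mu+\pi^2/(\theta_1-\theta_0)^2\bigr)$, since the infimum of the Rayleigh quotient for the constant-coefficient Dirichlet problem is $\pi^2/(\theta_1-\theta_0)^2+\mu$ (attained at $h_0$ above). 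Combining the two estimates gives \eqref{eq.10}.

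No step is genuinely difficult here; the only thing to watch is that the monotonicity property $\csc^2\theta\le\csc^2\theta_\divideontimes$ actually holds uniformly on $[\theta_0,\theta_1]$, which is where the definition $\theta_\divideontimes=\min(\theta_0,\pi-\theta_1)$ is used: since $\csc^2$ decreases on $(0,\pi/2]$ and increases on $[\pi/2,\pi)$, it attains its maximum on $[\theta_0,\theta_1]$ at whichever endpoint is closer to $0$ or $\pi$, which is precisely the one encoded by $\theta_\divideontimes$. Everything else is a direct application of Rayleigh's principle together with the known Dirichlet eigenvalue of $-d^2/d\theta^2$ on an interval.
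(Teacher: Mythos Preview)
Your proposal is correct and follows essentially the same approach as the paper's proof: both use the Rayleigh quotient together with the weight bounds $\varepsilon\le\varphi\le\delta$ and $1\le\csc^2\theta\le\csc^2\theta_\divideontimes$, the same test function $\sin\bigl(\pi(\theta-\theta_0)/(\theta_1-\theta_0)\bigr)$ for the upper bound, and the Wirtinger inequality (equivalently, the first Dirichlet eigenvalue of $-d^2/d\theta^2$) for the lower bound. The only cosmetic difference is that the paper plugs in the actual first eigenfunction for the lower estimate and then applies Wirtinger, whereas you bound the quotient for all admissible $h$ and pass to the infimum; these are equivalent.
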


\begin{proof}
For the lower estimate, we consider a solution $h$ of~\eqref{eq2chengyau}. We multiply by $h$ both sides of \eqref{eq2chengyau-Equiv}, and we integrate from $\theta_0$ to $\theta_1$, to get
\begin{equation*}
\lambda_1^{\mu}= \frac{\int_{\theta_0}^{\theta_1}\varphi((h_\theta)^2+\mu h^2)d\theta}{\int_{\theta_0}^{\theta_1}\csc^2 \theta h^2d\theta}
\geq \frac{\varepsilon}{\csc^2(\theta_\divideontimes)}\Big( \mu + \frac{\int_{\theta_0}^{\theta_1}(h_\theta)^2d\theta}{\int_{\theta_0}^{\theta_1}h^2d\theta}\Big).
\end{equation*}
By using the Wirtinger inequality $\int^D_0(h')^2dx \geq \frac{\pi^2}{D^2}\int_0^Dh^2dx,$ we obtain
\begin{equation*}
\lambda_1^{\mu} \geq  \varepsilon\sin^2(\theta_\divideontimes)\Big(\mu +\frac{\pi^2}{(\theta_1-\theta_0)^2}\Big).
\end{equation*}

For the upper estimate, we choose the test function $\phi=\sin(\frac{\theta-\theta_0}{\theta_1-\theta_0}\pi)$. By Rayleigh's theorem (see, e.g.,\cite[p.~177]{Kielhofer}) and the fact that $\csc^2\theta\geq 1$, we have
\begin{align*}
\lambda_1^{\mu}\leq  \frac{\int_{\theta_0}^{\theta_1}\varphi((\phi_\theta)^2+\mu \phi^2)d\theta}{\int_{\theta_0}^{\theta_1}\csc^2 \theta \phi^2d\theta}
\leq \delta\Big( \mu + \frac{\int_{\theta_0}^{\theta_1}(\phi_\theta)^2d\theta}{\int_{\theta_0}^{\theta_1}\phi^2d\theta}\Big)
=  \delta\Big(\mu +\frac{\pi^2}{(\theta_1-\theta_0)^2}\Big).
\end{align*}
\end{proof}

\begin{lemma}\label{lema2}
The second eigenvalue $\lambda^\mu_2$ of Problem~\eqref{eq2chengyau} satisfies
\begin{equation}\label{eq.11}
\varepsilon\sin^2\theta_\divideontimes \Big(\mu+\frac{4\pi^2}{(\theta_1-\theta_0)^2}\Big)\leq \lambda^\mu_2\leq \delta\Big(\mu+\frac{4\pi^2}{(\theta_1-\theta_0)^2}\Big).
\end{equation}
\end{lemma}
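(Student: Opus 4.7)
The argument parallels Lemma~\ref{lema1}, but uses the min--max (Poincaré) characterization of the second eigenvalue of the Sturm--Liouville problem~\eqref{eq2chengyau-Equiv}, namely
\begin{equation*}
\lambda_2^\mu \;=\; \inf_{\substack{V\subset \mathcal{H}_0^1(\theta_0,\theta_1)\\ \dim V = 2}}\;\sup_{h\in V\setminus\{0\}}\; \frac{\int_{\theta_0}^{\theta_1}\varphi\bigl((h_\theta)^2+\mu h^2\bigr)d\theta}{\int_{\theta_0}^{\theta_1}\csc^2\theta\, h^2\, d\theta},
\end{equation*}
which is justified by Theorem~\ref{teosturm1} together with the fact that $\varphi\in[\varepsilon,\delta]$ and $\csc^2\theta\geq 1>0$ on $[\theta_0,\theta_1]$.

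For the lower bound, I would use the pointwise estimates $\varphi\geq\varepsilon$ and $\csc^2\theta\leq\csc^2(\theta_\divideontimes)$ (exactly as in Lemma~\ref{lema1}, recalling that $\csc^2\theta$ attains its maximum on $[\theta_0,\theta_1]$ at $\theta_0$ or $\theta_1$, whichever is farther from $\pi/2$) to bound the Rayleigh quotient uniformly in $h$:
\begin{equation*}
\frac{\int_{\theta_0}^{\theta_1}\varphi\bigl((h_\theta)^2+\mu h^2\bigr)d\theta}{\int_{\theta_0}^{\theta_1}\csc^2\theta\, h^2\, d\theta}\;\geq\;\varepsilon\sin^2(\theta_\divideontimes)\left(\mu+\frac{\int_{\theta_0}^{\theta_1}(h_\theta)^2 d\theta}{\int_{\theta_0}^{\theta_1}h^2\,d\theta}\right).
\end{equation*}
Taking the inf--sup on both sides and using that the second Dirichlet eigenvalue of $-d^2/d\theta^2$ on $(\theta_0,\theta_1)$ equals $4\pi^2/(\theta_1-\theta_0)^2$ yields the required lower bound.

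For the upper bound, I plug a concrete two-dimensional test subspace into the min--max. The natural choice is $V=\mathrm{span}\{\phi_1,\phi_2\}$ with
\begin{equation*}
\phi_j(\theta)=\sin\!\left(j\pi\,\frac{\theta-\theta_0}{\theta_1-\theta_0}\right),\qquad j=1,2.
\end{equation*}
These functions are $L^2$-orthogonal on $(\theta_0,\theta_1)$ with $\|\phi_j\|^2=(\theta_1-\theta_0)/2$, and their derivatives are also $L^2$-orthogonal. For any $h=a\phi_1+b\phi_2$ the bounds $\varphi\leq\delta$ and $\csc^2\theta\geq 1$ give
\begin{equation*}
\frac{\int_{\theta_0}^{\theta_1}\varphi\bigl((h_\theta)^2+\mu h^2\bigr)d\theta}{\int_{\theta_0}^{\theta_1}\csc^2\theta\, h^2\, d\theta}\;\leq\;\delta\left(\mu+\frac{a^2(\pi/D)^2+b^2(2\pi/D)^2}{a^2+b^2}\right)\;\leq\;\delta\!\left(\mu+\frac{4\pi^2}{(\theta_1-\theta_0)^2}\right),
\end{equation*}
with $D=\theta_1-\theta_0$. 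Taking the supremum over $h\in V$ finishes the upper bound.

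The only delicate point is verifying that the min--max principle applies to problem~\eqref{eq2chengyau-Equiv}; this is routine since the coefficients $p=\varphi$, $q=\mu\varphi$ and weight $\rho=\csc^2\theta$ satisfy the hypotheses \eqref{condtionsturmlioupro}, so Theorem~\ref{teosturm1} guarantees the variational characterization used above. No other step is genuinely harder than those already carried out in the proof of Lemma~\ref{lema1}.
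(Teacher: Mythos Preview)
Your argument is correct, but it proceeds by a different route than the paper. The paper does not use the Courant--Fischer min--max at all: instead it exploits the Sturm--Liouville nodal structure. By Theorem~\ref{teosturm1} the second eigenfunction $h^\mu_2$ has exactly one interior zero $\theta_2\in(\theta_0,\theta_1)$, and its restriction to each of $[\theta_0,\theta_2]$, $[\theta_2,\theta_1]$ is a positive Dirichlet eigenfunction there, so $\lambda^\mu_2$ equals the \emph{first} eigenvalue on each subinterval. Applying Lemma~\ref{lema1} to the shorter subinterval (length $\leq(\theta_1-\theta_0)/2$) gives the lower bound, and to the longer one (length $\geq(\theta_1-\theta_0)/2$) gives the upper bound.

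Compared with this, your approach is somewhat more abstract but buys generality: it extends verbatim to $\lambda^\mu_k$ with $k^2\pi^2/(\theta_1-\theta_0)^2$ in place of $4\pi^2/(\theta_1-\theta_0)^2$, and it does not rely on the one-dimensional nodal theorem. The paper's approach is more elementary in that it only needs the Rayleigh characterization of the \emph{first} eigenvalue (already used in Lemma~\ref{lema1}) together with the zero-counting part of Theorem~\ref{teosturm1}. One small caveat on your side: Theorem~\ref{teosturm1} as quoted in the paper does not actually state the min--max principle, so you should point to a separate Courant--Fischer reference for weighted Sturm--Liouville problems rather than to Theorem~\ref{teosturm1}.
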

\begin{proof}
Let $h^\mu_2$ be an eigenfunction corresponding to the second eigenvalue $\lambda^\mu_2$. From Theorem~\ref{teosturm1}, there is only one $\theta_2\in (\theta_0,\theta_1)$ such that $h^\mu_2(\theta_2) = 0$, besides, the eigenvalue $\lambda^\mu_2$ coincides with the first eigenvalue of $-(\varphi h_{\theta})_\theta +\mu \varphi h=\lambda\csc^2\theta h$ with the Dirichlet bounded condition on any of the intervals $[\theta_0,\theta_2]$ or $[\theta_2,\theta_1]$. The lower and upper limits on~\eqref{eq.11} are obtained by Lemma~\ref{lema1} considering the intervals with the shortest and longest lengths among $[\theta_0,\theta_2]$ and $[\theta_2,\theta_1]$, respectively, and noticing that $\min\{\theta_2-\theta_0,\theta_1-\theta_2\}\leq (\theta_1-\theta_0)/2$ and $\max\{\theta_2-\theta_0,\theta_1-\theta_2\}\geq (\theta_1-\theta_0)/2.$
\end{proof}

Now, $\lambda_2$ will be appropriately chosen. For that, we fix $\theta_\divideontimes> \frac{\pi}{6}$ and $\varepsilon> \delta/4$, so that $\frac{4\varepsilon\sin^2 \theta_\divideontimes-\delta}{4\delta-\varepsilon\sin^2 \theta_\divideontimes}>0$, thus we can choose $\ell$ such that 
\begin{equation}\label{eq.12}
\frac{\pi^2}{(\theta_1-\theta_0)^2}\frac{4\varepsilon\sin^2 \theta_\divideontimes-\delta}{4\delta-\varepsilon\sin^2 \theta_\divideontimes}\geq \ell^2.
\end{equation}
Taking $\mu=4\ell^2$ in~\eqref{eq.10} and $\mu=\ell^2$ in~\eqref{eq.11}, we get
\begin{eqnarray*}
\lambda_2^{\ell^2}-\lambda_1^{4\ell^2}&\geq& \varepsilon\sin^2\theta_\divideontimes \Big(\ell^2+\frac{4\pi^2}{(\theta_1-\theta_0)^2}\Big)-\delta\Big(4\ell^2 + \frac{\pi^2}{(\theta_1-\theta_0)^2}\Big)\\
&=& \ell^2\Big(\varepsilon\sin^2\theta_\divideontimes-4\delta\Big)+\frac{\pi^2}{(\theta_1-\theta_0)^2}\Big(4\varepsilon\sin^2\theta_\divideontimes-\delta \Big)\\
&=& \ell^2\Big(\varepsilon\sin^2\theta_\divideontimes-4\delta\Big)+\frac{\pi^2}{(\theta_1-\theta_0)^2}\Big(\frac{4\varepsilon\sin^2\theta_\divideontimes-\delta }{4\delta-\varepsilon\sin^2\theta_\divideontimes}\Big)(4\delta-\varepsilon\sin^2\theta_\divideontimes).
\end{eqnarray*}
Thus, by using~\eqref{eq.12}, we conclude that
\begin{eqnarray*}
\lambda_2^{\ell^2}-\lambda_1^{4\ell^2}\geq \ell^2\Big(\varepsilon\sin^2\theta_\divideontimes-4\delta\Big)+\ell^2\Big(4\delta-\varepsilon\sin^2\theta_\divideontimes\Big)=0.
\end{eqnarray*}
Hence, the second eigenvalue of Problem~\eqref{eq4chengyau} on $\Omega_{\ell,\theta_0,\theta_1}$ must be $\lambda^{4\ell^2}_1$, see~\eqref{segundoautovalor}. Geometrically, this corresponds to a domain as in Figure~\ref{fig1} in which the opening angle is small compared to the vertical length.

From now on, the operator $\dv(\varphi\nabla u))$ in \eqref{opchengyau} will be considered on the family of domains
\begin{equation}\label{OMEGA}
\Omega_{\ell,\theta_0,\theta_1}=\big\{(r,\theta)\,:\,1<r<e^\frac{\pi}{\ell}\quad\mbox{and}\quad \theta_0<\theta <\theta_1\,\, \mbox{satisfying}\,\,\eqref{eq.12}\big\}.
\end{equation}

To estimate the fundamental gap for $\dv(\varphi\nabla u)$, with $\varepsilon\leq\varphi\leq\delta$ e $\varphi_r=0$, we will need the diameter estimate of $\Omega_{\ell,\theta_0,\theta_1}$ which has been calculated by Bourni et al. in the more general configuration of this domain, namely:
\begin{lemma}\label{lema4.1}(Bourni et al.~\cite{Bourni})
 $\frac{\pi^2}{\ell^2D^2_{\ell,\theta_0,\theta_1}} \to 1$ as $\ell \to 0$ or $\theta_\divideontimes\to \frac{\pi}{2}$.
\end{lemma}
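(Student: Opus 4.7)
The natural first move is a change of coordinates that straightens the domain. Setting $u := \ln r$, the metric \eqref{metrica} becomes
\[
ds^{2} = \frac{du^{2} + d\theta^{2}}{\sin^{2}\theta},
\]
and $\Omega_{\ell,\theta_{0},\theta_{1}}$ is the rectangle $(0,\pi/\ell)\times(\theta_{0},\theta_{1})$. Since we are in $\mathbb{H}^{2}(-1)$, the identification $(u,\theta)\mapsto(e^{u}\cos\theta,\,e^{u}\sin\theta)$ with the upper half-plane model, followed by direct substitution into the standard distance formula, yields the compact expression
\[
d\bigl((u_{1},\phi_{1}),(u_{2},\phi_{2})\bigr) = \cosh^{-1}\!\left(1 + \frac{\cosh(u_{1}-u_{2}) - \cos(\phi_{1}-\phi_{2})}{\sin\phi_{1}\sin\phi_{2}}\right).
\]

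For the lower bound on $D_{\ell,\theta_{0},\theta_{1}}$, I take the pair $\phi_{1}=\phi_{2}=\pi/2$ (which lies strictly in $(\theta_{0},\theta_{1})$ by hypothesis) with $u_{1}=0$ and $u_{2}=\pi/\ell$. The formula above gives distance exactly $\pi/\ell$; geometrically, these are the endpoints of a segment of the $y$-axis, a genuine geodesic of $\mathbb{H}^{2}(-1)$. Hence $D_{\ell,\theta_{0},\theta_{1}} \geq \pi/\ell$, giving $\pi^{2}/(\ell^{2}D^{2}) \leq 1$ uniformly.

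For the matching upper bound the plan splits by limit. When $\ell\to 0$ with $\theta_{\divideontimes}$ fixed, the crude estimates $-\cos(\phi_{1}-\phi_{2}) \leq 1$ and $\sin\phi_{i} \geq \sin\theta_{\divideontimes}$ combined with the asymptotic $\cosh^{-1}(x) = \ln(2x) + O(1/x)$ for large $x$ yield
\[
D_{\ell,\theta_{0},\theta_{1}} \leq \cosh^{-1}\!\left(1 + \frac{\cosh(\pi/\ell)+1}{\sin^{2}\theta_{\divideontimes}}\right) = \frac{\pi}{\ell} - 2\ln\sin\theta_{\divideontimes} + O(1),
\]
so $\ell D \to \pi$ in that limit. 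When $\theta_{\divideontimes}\to\pi/2$ with $\ell$ fixed, the angular width satisfies $|\phi_{1}-\phi_{2}| \leq \theta_{1}-\theta_{0} \leq \pi - 2\theta_{\divideontimes} \to 0$, while $\sin\phi_{i}\to 1$, both uniformly on $\bar\Omega$. Consequently the argument of $\cosh^{-1}$ in the distance formula converges uniformly to $\cosh(u_{1}-u_{2})$, so $d \to |u_{1}-u_{2}| \leq \pi/\ell$ and again $D \to \pi/\ell$. In either case $\pi^{2}/(\ell^{2}D^{2}) \to 1$.

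The main potential obstacle is producing an upper bound tight enough to pin down the leading coefficient $\pi$ rather than a mere $O(1/\ell)$ bound. This is precisely where the asymptotic $\cosh^{-1}(x)\sim\ln(2x)$ rescues the estimate: the numerator in the argument of $\cosh^{-1}$ scales like $e^{\pi/\ell}$, but the logarithm compresses it back down to a linear-in-$1/\ell$ distance, whose leading coefficient matches that of the $y$-axis geodesic used for the lower bound.
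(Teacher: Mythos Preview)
The paper does not prove this lemma; it is simply quoted from Bourni et al.~\cite{Bourni} and used as a black box in the proof of Theorem~\ref{teo1}. So there is no proof in the paper to compare your argument against.

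That said, your argument is correct and self-contained. The change of variable $u=\ln r$ and the resulting distance formula
\[
d=\cosh^{-1}\!\left(1+\frac{\cosh(u_1-u_2)-\cos(\phi_1-\phi_2)}{\sin\phi_1\sin\phi_2}\right)
\]
are exactly right (this is just the standard upper-half-plane distance after substituting $x=e^{u}\cos\theta$, $y=e^{u}\sin\theta$). The lower bound $D\ge\pi/\ell$ via the vertical geodesic $\phi=\pi/2$ is immediate, and your two upper-bound arguments are both sound: for $\ell\to0$ the asymptotic $\cosh^{-1}(x)=\ln(2x)+o(1)$ correctly extracts the leading term $\pi/\ell$, while for $\theta_{\divideontimes}\to\pi/2$ the uniform convergence of the argument of $\cosh^{-1}$ to $\cosh(u_1-u_2)$ on $[0,\pi/\ell]^2$ (with $\ell$ fixed) is legitimate because the $\theta$-range collapses. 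One cosmetic remark: the endpoints $u_1=0$, $u_2=\pi/\ell$ lie on $\partial\Omega$, not in the open domain, but the diameter is taken over the closure so this causes no issue.
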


\subsection{Estimate of the fundamental gap}

\begin{lemma}\label{lema3} 
The fundamental gap for $\dv(\varphi\nabla u)$, with $\varepsilon\leq\varphi\leq\delta$ and $\varphi_r=0$, on each domain $\Omega_{\ell,\theta_0,\theta_1}$ defined by~\eqref{OMEGA}, satisfies 
\begin{equation}\label{eq.13}
3\varepsilon\sin^2\theta_\divideontimes \ell^2< \lambda_2-\lambda_1<3\delta \ell^2.
\end{equation} 
In particular, when $\varepsilon=\delta$ and taking $\theta_\divideontimes\to \frac{\pi}{2}$, then the fundamental gap approaches the constant $3\delta \ell^2.$
\end{lemma}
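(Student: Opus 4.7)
The plan is to use the identification from Section~\ref{Id.autovalores} that, on the domains $\Omega_{\ell,\theta_0,\theta_1}$ satisfying \eqref{eq.12}, one has $\lambda_1=\lambda_1^{\ell^2}$ and $\lambda_2=\lambda_1^{4\ell^2}$, where $\lambda_1^{\mu}$ denotes the first Dirichlet eigenvalue of the one-dimensional problem \eqref{eq2chengyau-Equiv} on $(\theta_0,\theta_1)$. Since both eigenvalues come from the same angular Sturm--Liouville operator with only the zeroth-order coefficient $\mu\varphi$ changed, the gap $\lambda_2-\lambda_1$ reduces to comparing $\lambda_1^{4\ell^2}$ with $\lambda_1^{\ell^2}$, and the natural tool is the Rayleigh quotient
\begin{equation*}
\lambda_1^{\mu}=\min_{0\neq h\in H_0^1(\theta_0,\theta_1)}\frac{\int_{\theta_0}^{\theta_1}\varphi\bigl((h_\theta)^2+\mu h^2\bigr)\,d\theta}{\int_{\theta_0}^{\theta_1}\csc^2\theta\, h^2\,d\theta}.
\end{equation*}

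For the upper bound, I would test the Rayleigh quotient at $\mu=4\ell^2$ with the positive eigenfunction $h^{\ell^2}_1$ realizing $\lambda_1^{\ell^2}$. This gives
\begin{equation*}
\lambda_1^{4\ell^2}\leq \lambda_1^{\ell^2}+3\ell^2\,\frac{\int_{\theta_0}^{\theta_1}\varphi (h^{\ell^2}_1)^2 d\theta}{\int_{\theta_0}^{\theta_1}\csc^2\theta\,(h^{\ell^2}_1)^2 d\theta},
\end{equation*}
and then the bounds $\varphi\leq\delta$ and $\csc^2\theta\geq 1$ immediately yield $\lambda_2-\lambda_1\leq 3\delta\ell^2$. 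For the lower bound I would symmetrically test the $\mu=\ell^2$ Rayleigh quotient with the eigenfunction $h^{4\ell^2}_1$ realizing $\lambda_1^{4\ell^2}$, which produces
\begin{equation*}
\lambda_1^{4\ell^2}-\lambda_1^{\ell^2}\geq 3\ell^2\,\frac{\int_{\theta_0}^{\theta_1}\varphi\,(h^{4\ell^2}_1)^2\,d\theta}{\int_{\theta_0}^{\theta_1}\csc^2\theta\,(h^{4\ell^2}_1)^2\,d\theta},
\end{equation*}
and applying $\varphi\geq\varepsilon$ and $\csc^2\theta\leq\csc^2\theta_\divideontimes$ delivers $\lambda_2-\lambda_1\geq 3\varepsilon\sin^2\theta_\divideontimes\,\ell^2$.

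The strictness of the two inequalities in \eqref{eq.13} is where I expect the only subtlety. In the upper bound, $h^{\ell^2}_1$ is strictly positive on $(\theta_0,\theta_1)$ so $\int \csc^2\theta\,(h^{\ell^2}_1)^2 d\theta>\int (h^{\ell^2}_1)^2 d\theta$ strictly (since $\csc^2\theta>1$ on any neighborhood of the endpoints), giving the sharp strict inequality. The lower bound uses that $\csc^2\theta<\csc^2\theta_\divideontimes$ on a set of positive measure inside $(\theta_0,\theta_1)$, so again the estimate is strict. Finally, the ``in particular'' statement follows from the fact that if $\varepsilon=\delta$ and $\theta_\divideontimes\to\pi/2$, then $\sin^2\theta_\divideontimes\to 1$ and the lower and upper bounds both converge to $3\delta\ell^2$, so $\lambda_2-\lambda_1\to 3\delta\ell^2$ by a squeeze argument.
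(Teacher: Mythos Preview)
Your proof is correct but follows a genuinely different route from the paper's. The paper argues by contradiction using the Sturm comparison theorem: assuming, say, $\lambda_2\leq \lambda_1+3\varepsilon\ell^2\sin^2\theta_\divideontimes$, one checks that $\lambda_2\csc^2\theta-4\ell^2\varphi\leq \lambda_1\csc^2\theta-\ell^2\varphi$ strictly on $(\theta_0,\theta_1)$, and Sturm's theorem then forces the first eigenfunction $h^{(1)}$ to vanish in the open interval, contradicting its positivity; the upper bound is handled symmetrically. Your approach instead uses the variational characterization of $\lambda_1^\mu$ directly, inserting the eigenfunction from one value of $\mu$ as a test function for the other and reading off the gap from the difference $3\ell^2\int\varphi h^2/\int\csc^2\theta\,h^2$. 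This is arguably more elementary, since it bypasses Sturm comparison entirely and needs only the Rayleigh quotient (which the paper already invokes in Lemma~\ref{lema1}); your strictness argument via $\csc^2\theta>1$ and $\csc^2\theta<\csc^2\theta_\divideontimes$ on sets of positive measure is also clean. The paper's Sturm-based argument, on the other hand, fits more naturally with the surrounding Sturm--Liouville framework and makes the role of the zero set of the eigenfunctions explicit. Both approaches exploit the same structural feature: the two eigenvalues come from the \emph{same} angular operator with only the zeroth-order coefficient $\mu\varphi$ shifted.
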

\begin{proof}
In the assumptions of the present lemma, we have $\lambda_1=\lambda_1^{\ell^2}$ and $\lambda_2=\lambda_1^{4\ell^2}$. Let $h^{(1)}$ and $h^{(2)}$ be the corresponding eigenfunctions of this eigenvalues, respectively, it follows from~\eqref{eq.8} and~\eqref{eq.9} that
\begin{eqnarray*}
(\varphi h^{(1)}_\theta)_\theta+(\lambda_1\csc^2\theta-\ell^2\varphi)h^{(1)} = 0,\\
(\varphi h^{(2)}_\theta)_\theta+(\lambda_2\csc^2\theta-4\ell^2\varphi)h^{(2)} = 0.
\end{eqnarray*}
To obtain the lower estimate, recall that $\csc^2\theta\leq \csc^2\theta_\divideontimes$. The proof is by contradiction. Suppose that $\lambda_2\leq \lambda_1+3\varepsilon \ell^2\sin^2\theta_\divideontimes$, then
\begin{align*}
\lambda_2\csc^2\theta -4\ell^2\varphi\leq \lambda_1\csc^2\theta+3\ell^2\varepsilon \sin^2\theta_\divideontimes \csc^2\theta-4\ell^2\varphi\leq \lambda_1\csc^2\theta -\ell^2\varphi.
\end{align*}
Moreover, observe that $\lambda_1\csc^2\theta+3\ell^2\varepsilon \sin^2\theta_\divideontimes \csc^2\theta-4\ell^2\varphi= \lambda_1\csc^2\theta -\ell^2\varphi$ is equivalent to $ \sin^2\theta= \frac{\varepsilon}{\varphi}\sin^2\theta_\divideontimes$, and as $\frac{\varepsilon}{\varphi}\leq 1$, then $\sin\theta\leq \sin\theta_\divideontimes$, so, $\theta\leq \theta_\divideontimes$ or $\theta\geq \pi-\theta_\divideontimes$, hence, $\theta\leq \theta_0$ or $\theta\geq \theta_1$. Consequently, the inequality is strict in $(\theta_0,\theta_1)$, which allows us to use Theorem~\ref{teosturn} to conclude that $h^{(1)}$ has at least one zero in the interval $(\theta_0,\theta_1)$, which contradicts Theorem~\ref{teosturm1}. The upper estimate is obtained in the same way by using that $\csc^2\theta\geq 1$.
\end{proof}
In order to obtain an estimate for the fundamental gap of the operator $\dv(\varphi\nabla u)$ in~\eqref{opchengyau} on each domain of the family in~\eqref{OMEGA},  we are using variational arguments as in~\cite[Sec.~3]{Lavine} and \cite[Sec.~5]{Bourni}, as well as the Sturm-Liouville's result. For it,
we consider the one-parameter family of problems
\begin{eqnarray}\label{eq.18}
(\varphi h_\theta)_\theta+\lambda\csc^2\theta h&=&\mu(s)\varphi(\theta) h\quad\mbox{em}\quad (\theta_0,\theta_1)\\
\nonumber h(\theta_0)=h(\theta_1)&=&0,
\end{eqnarray}
where $h(\theta) = h^s(\theta)$, $\lambda=\lambda(s)$ and $\mu(s)$ is a smooth curve such that $\mu(0)=\ell^2$ and $\mu(1)=4\ell^2$, with $0\leq s\leq 1$. For each $s$, let $\lambda(s)$ be the smallest eigenvalue that is smooth at $s$, and let $h^s(\theta)$ be its first eigenfunction given by Theorem~\ref{teosturm1}, which satisfies $\int_{\theta_0}^{\theta_1}\csc^2\theta (h^s(\theta))^2d\theta=1$ and $h^s(\theta) > 0$ on $(\theta_0,\theta_1)$. Denoting by $\dot{f}$ the derivative with respect to $s$ of a function $f(s)$, we get 
\begin{equation*}
(\varphi(\theta)\dot{h}_\theta)_\theta+\lambda(s)\csc^2\theta\dot{h}-\mu(s)\varphi(\theta)\dot{h}+\dot{\lambda}(s)\csc^2\theta h=\dot{\mu}(s)\varphi(\theta)h\quad\mbox{on}\quad (\theta_0,\theta_1).
\end{equation*}
Multiplying the previous equation by $h$, and by integrating from $\theta_0$ to $\theta_1$, we have
\begin{align}\label{eq.3.25}
\nonumber&\int_{\theta_0}^{\theta_1}\big((\varphi(\theta)\dot{h}_\theta)_\theta+\lambda(s)\csc^2\theta\dot{h}-\mu(s)\varphi(\theta)\dot{h}\big)h d\theta\\
&=\int_{\theta_0}^{\theta_1}\dot{\mu}(s)\varphi(\theta)h^2d\theta -\int_{\theta_0}^{\theta_1}\dot{\lambda}(s)\csc^2\theta h^2d\theta.
\end{align}
Using integration by parts and \eqref{eq.18} to evaluate the term
\begin{equation*}
\int_{\theta_0}^{\theta_1}(\varphi(\theta)\dot{h}_\theta)_\theta hd\theta=\int_{\theta_0}^{\theta_1}(\varphi(\theta) h_\theta)_\theta\dot{h}d\theta=\int_{\theta_0}^{\theta_1}\big(\mu(s)\varphi(\theta)h\dot{h} -\lambda(s)\csc^2\theta h\dot{h}\big)d\theta,
\end{equation*}
equation~\eqref{eq.3.25} reduces to 
\begin{equation*}
\dot{\mu}(s)\int_{\theta_0}^{\theta_1} \varphi(\theta) h^2d\theta=\dot{\lambda}(s)\int_{\theta_0}^{\theta_1} \csc^2\theta h^2d\theta=\dot{\lambda}(s).
\end{equation*}
Note that we can take the curve $\mu(s) = \ell^2+3\ell^2s$, so that
\begin{equation*}
\dot{\lambda}(s)=3\ell^2\int_{\theta_0}^{\theta_1} \varphi (\theta)(h^s(\theta))^2d\theta.
\end{equation*}
Integrating from $0$ to $1$, and as $\lambda(0) =\lambda_1$ and $\lambda(1) = \lambda_2$, we obtain
\begin{equation}\label{eq.20}
\lambda_2-\lambda_1\leq 3\ell^2 \delta\max_{s\in[0,1]}\int_{\theta_0}^{\theta_1} (h^s(\theta))^2d\theta.
\end{equation}
Now, it is enough to estimate the right-hand side of~\eqref{eq.20}.
\begin{proposition}\label{prop5.1} It is valid that
\begin{equation}\label{eq.21}
\max_{s\in [0,1]} \int_{\theta_0}^{\theta_1} (h^s(\theta))^2d\theta< 1.
\end{equation}
\end{proposition}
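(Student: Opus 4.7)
The plan is to reduce the inequality to a strictly positive defect coming from the identity $\csc^2\theta = 1 + \cot^2\theta$, and then to upgrade pointwise-in-$s$ strict inequality to a uniform bound by a compactness-plus-continuity argument.

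First I would unpack the normalization. By construction $h^s$ is the first eigenfunction of the Sturm--Liouville problem \eqref{eq.18} with $\int_{\theta_0}^{\theta_1} \csc^2\theta\, (h^s(\theta))^2\, d\theta = 1$. Writing $\csc^2\theta = 1 + \cot^2\theta$ yields the exact identity
\begin{equation*}
\int_{\theta_0}^{\theta_1} (h^s(\theta))^2\, d\theta \;=\; 1 \;-\; \int_{\theta_0}^{\theta_1} \cot^2\theta\, (h^s(\theta))^2\, d\theta.
\end{equation*}
Thus the proposition reduces to showing that the subtracted integral stays bounded away from $0$ uniformly in $s \in [0,1]$.

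Next I would use the sign of $h^s$. Since $\mu(s) = \ell^2 + 3\ell^2 s$ defines, for each $s$, a regular Sturm--Liouville problem on $[\theta_0,\theta_1]$ with $\varepsilon \leq \varphi \leq \delta$ and continuous coefficients, Theorem~\ref{teosturm1} together with Theorem~\ref{teorsinalu_1} (applied to each fibered one-dimensional problem) forces the first eigenfunction to be of constant sign, so $h^s > 0$ on $(\theta_0,\theta_1)$. Also $\cot^2\theta \geq 0$ on $(\theta_0,\theta_1) \subset (0,\pi)$, and vanishes only at the single point $\theta = \pi/2$. Hence, for each fixed $s$, $\int_{\theta_0}^{\theta_1}\cot^2\theta\,(h^s)^2\,d\theta > 0$, which already gives the strict pointwise inequality $\int (h^s)^2\,d\theta < 1$.

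The step I expect to be the only real issue is promoting this to a strict maximum. For that I would invoke the standard analytic perturbation theory for regular Sturm--Liouville problems: the simple first eigenvalue $\lambda(s)$ and its normalized first eigenfunction $h^s$ depend continuously (in fact real-analytically) on the parameter $s \in [0,1]$, in $C^0([\theta_0,\theta_1])$, so the map
\begin{equation*}
s \;\longmapsto\; F(s) \;:=\; \int_{\theta_0}^{\theta_1} \cot^2\theta\, (h^s(\theta))^2\, d\theta
\end{equation*}
is continuous on the compact set $[0,1]$. Since $F(s) > 0$ for every $s$, we conclude $\min_{s\in[0,1]} F(s) > 0$, and therefore
\begin{equation*}
\max_{s\in[0,1]} \int_{\theta_0}^{\theta_1} (h^s(\theta))^2\, d\theta \;=\; 1 - \min_{s\in[0,1]} F(s) \;<\; 1,
\end{equation*}
which is exactly \eqref{eq.21}. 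Plugging this back into \eqref{eq.20} then yields the desired upper bound $\lambda_2 - \lambda_1 < 3\delta\ell^2$ that matches the gap estimate in Lemma~\ref{lema3}.
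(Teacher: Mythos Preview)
Your proof is correct and rests on the same key observation as the paper's: since $\csc^2\theta>1$ away from $\theta=\pi/2$ and $h^s>0$ on $(\theta_0,\theta_1)$, the normalization $\int_{\theta_0}^{\theta_1}\csc^2\theta\,(h^s)^2\,d\theta=1$ forces $\int_{\theta_0}^{\theta_1}(h^s)^2\,d\theta<1$ for each fixed $s$. Your execution differs in two respects. First, you express the defect explicitly via the identity $\csc^2\theta=1+\cot^2\theta$, whereas the paper argues by contradiction, splitting the integral at an auxiliary angle $\alpha\in(\theta_0,\pi/2)$ to obtain $(\csc^2\alpha-1)\int_{\theta_0}^{\alpha}(h^s)^2\,d\theta\leq 0$; your route is more direct and avoids the auxiliary splitting. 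Second, you explicitly promote the pointwise strict inequality to a strict bound on the maximum over $[0,1]$ via continuity of $s\mapsto h^s$ (from simple-eigenvalue perturbation theory) and compactness of $[0,1]$; the paper simply ends with ``$\int(h^s)^2<1$ for all $s$, which is enough,'' leaving that passage implicit. Your added care on this point is justified, since a pointwise strict inequality over a compact parameter set does not by itself give a strict supremum without continuity.
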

\begin{proof}
First, note that $\int_{\theta_0}^{\theta_1} (h^s(\theta))^2d\theta\leq \int_{\theta_0}^{\theta_1} \csc^2\theta(h^s(\theta))^2d\theta = 1$. We will show that $\int_{\theta_0}^{\theta_1} (h^s(\theta))^2d\theta<1$ for all $s\in[0,1]$. For this purpose, we consider the angle $\alpha$ satisfying
\begin{equation*}
\theta_0< \alpha\ < \frac{\pi}{2}.
\end{equation*}
The proof is by contradiction. Assume the equality
\begin{eqnarray*}
\int_{\theta_0}^{\theta_1}\csc^2\theta (h^s(\theta))^2d\theta=&1&= \int_{\theta_0}^{\theta_1}(h^s(\theta))^2d\theta,
\end{eqnarray*}
for some $s\in[0,1]$. Recall that $\theta_1>\frac{\pi}{2}$, so that we can write
\begin{eqnarray*}
\int_{\theta_0}^{\alpha}\csc^2\theta (h^s(\theta))^2d\theta+\int_{\alpha}^{\theta_1}\csc^2\theta (h^s(\theta))^2 d\theta&=& \int_{\theta_0}^{\alpha}(h^s(\theta))^2d\theta+\int_{\alpha}^{\theta_1}(h^s(\theta))^2d\theta.
\end{eqnarray*}
The choice of $\alpha$ implies $\csc^2\theta\geq \csc^2\alpha$ on $(\theta_0,\alpha],$ moreover, $1\leq\csc^2\theta$, then
\begin{eqnarray*}
\csc^2\alpha\int_{\theta_0}^{\alpha} (h^s(\theta))^2d\theta+\int_{\alpha}^{\theta_1}(h^s(\theta))^2 d\theta&\leq& \int_{\theta_0}^{\alpha}(h^s(\theta))^2d\theta+\int_{\alpha}^{\theta_1}(h^s(\theta))^2d\theta.
\end{eqnarray*}
Hence,
\begin{eqnarray*}
(\csc^2\alpha-1)\int_{\theta_0}^{\alpha} (h^s(\theta))^2d\theta&\leq&0,
\end{eqnarray*}
which is a contradiction, because $\csc^2\alpha>1$. So, $\int_{\theta_0}^{\theta_1}(h^s(\theta))^2d\theta<1$, for all $s\in[0,1]$, which is enough to obtain the result of the proposition.
\end{proof}

\begin{theorem}\label{teo1}
The fundamental gap for the operator $\dv(\varphi\nabla u)$, with $\varepsilon\leq\varphi\leq\delta$ and $\varphi_r=0$, on each set of the family of convex domains $\lim_{\theta_\divideontimes\to \frac{\pi}{2}}\Omega_{\ell,\theta_0,\theta_1}$, where $\Omega_{\ell,\theta_0,\theta_1}$ is defined by~\eqref{OMEGA}, satisfies
\begin{equation*}
(\lambda_2-\lambda_1)D^2<3\pi^2\delta,
\end{equation*}
where $D=\displaystyle\lim_{\theta_\divideontimes\to \frac{\pi}{2}}D_{\ell,\theta_0,\theta_1}$ is the diameter of each set of the family $\displaystyle\lim_{\theta_\divideontimes\to \frac{\pi}{2}}\Omega_{\ell,\theta_0,\theta_1}.$
\end{theorem}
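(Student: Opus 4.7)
The plan is to combine two ingredients already in place: the strict variational upper bound for the gap from the one-parameter interpolation argument, and the diameter asymptotic of Lemma~\ref{lema4.1}. First, I would invoke the deformation $\mu(s)=\ell^2(1+3s)$ linking Problems~\eqref{eq.8} and~\eqref{eq.9}; differentiating the associated Sturm--Liouville eigenvalue in $s$, integrating from $0$ to $1$ with $h^s$ normalized by $\int_{\theta_0}^{\theta_1}\csc^2\theta\,(h^s)^2\,d\theta=1$, and using $\varphi\leq\delta$ produces~\eqref{eq.20}:
\[
\lambda_2-\lambda_1\leq 3\ell^2\delta\max_{s\in[0,1]}\int_{\theta_0}^{\theta_1}(h^s(\theta))^2\,d\theta.
\]
Proposition~\ref{prop5.1} then forces the maximum to be strictly less than $1$, whence
\[
\lambda_2-\lambda_1 < 3\ell^2\delta.
\]

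Next, I would multiply through by $D_{\ell,\theta_0,\theta_1}^2$ to obtain $(\lambda_2-\lambda_1)D_{\ell,\theta_0,\theta_1}^2 < 3\delta\cdot\ell^2 D_{\ell,\theta_0,\theta_1}^2$. By Lemma~\ref{lema4.1}, $\ell^2 D_{\ell,\theta_0,\theta_1}^2\to \pi^2$ as $\theta_\divideontimes\to\pi/2$, so passing to the limit set by set in the family $\Omega_{\ell,\theta_0,\theta_1}$ defined by~\eqref{OMEGA} delivers
\[
(\lambda_2-\lambda_1)D^2 < 3\pi^2\delta,
\]
which is the claim.

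The main obstacle I foresee is safeguarding the strictness of the inequality through the limit: Proposition~\ref{prop5.1} secures a strict bound for each admissible $(\ell,\theta_0,\theta_1)$, but the positive quantity $1-\max_s\int(h^s)^2d\theta = \int(\csc^2\theta-1)(h^s)^2d\theta$ is supported on the region where $\csc^2\theta-1>0$, which contracts as $\theta_\divideontimes\to\pi/2$. The remedy is to keep the strict inequality at the pre-limit level---for each fixed $\Omega_{\ell,\theta_0,\theta_1}$ within the admissible family~\eqref{OMEGA}---and to exploit that Lemma~\ref{lema4.1} provides the exact limiting equality $\ell^2 D^2=\pi^2$. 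Since the strict inequality $\lambda_2-\lambda_1<3\ell^2\delta$ holds at every admissible triple and the factor $\ell^2D_{\ell,\theta_0,\theta_1}^2$ converges to the explicit constant $\pi^2$, the strictness transfers to the limiting family, mirroring the constant-$\varphi$ treatment of Bourni et al.~\cite{Bourni} while accommodating the extra weight $\varphi$ via the uniform bound $\varphi\leq\delta$ in Step~1.
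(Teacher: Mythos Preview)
Your proposal is correct and follows essentially the same route as the paper: combine \eqref{eq.20} with Proposition~\ref{prop5.1} to obtain the strict bound $\lambda_2-\lambda_1<3\ell^2\delta$, then invoke Lemma~\ref{lema4.1} to convert $\ell^2$ into $\pi^2/D^2$ in the limit $\theta_\divideontimes\to\pi/2$. Your additional discussion of the strictness issue through the limit is a thoughtful caveat that the paper's proof simply leaves implicit.
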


\begin{proof} By~\eqref{eq.20} and Proposition~\ref{prop5.1}, we have that
\begin{equation*}
    \lambda_2-\lambda_1 < 3\ell^2\delta.
\end{equation*}
On the other hand, from Lemma~\ref{lema4.1} 
\begin{equation*}
3\ell^2\delta = \lim_{\theta_{*}\to\frac{\pi}{2}}\frac{3\pi^2\delta}{D^2_{\ell,\theta_0,\theta_1}}
\end{equation*}
which is enough to complete the result of the theorem.
\end{proof}

From Theorem~\ref{teo1} we prove the next result for the operator $\Lu$ defined as in~\eqref{defLintro}, with $T=\varphi I$ and $\varphi_r=0.$
\begin{corollary}
Consider the operator $\dv(\varphi\nabla u)$, with $\varepsilon\leq\varphi\leq\delta$ and $\varphi_r=0$, on each set of the family of convex domains $\lim_{\theta_\divideontimes\to \frac{\pi}{2}}\Omega_{\ell,\theta_0,\theta_1}$, where $\Omega_{\ell,\theta_0,\theta_1}$ is defined by~\eqref{OMEGA}. There exists a drifting function $\eta$ such that the fundamental gap for $\dv(\varphi\nabla u)$ remains invariant by a first order perturbation of this operator. More precisely, for the operator $\Lu u =\dv(\varphi\nabla u)-\langle\nabla\eta,\varphi\nabla u\rangle$ it is true that
\begin{equation*}
(\lambda_2-\lambda_1)D^2<3\pi^2\delta,
\end{equation*}
where $D=\displaystyle\lim_{\theta_\divideontimes\to \frac{\pi}{2}}D_{\ell,\theta_0,\theta_1}$ is the diameter of each set of the family $\displaystyle\lim_{\theta_\divideontimes\to \frac{\pi}{2}}\Omega_{\ell,\theta_0,\theta_1}.$
\end{corollary}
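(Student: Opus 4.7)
\medskip
\noindent\textbf{Proof proposal.} The plan is to exhibit a concrete radial drifting function and to show that, for this choice, the separation-of-variables machinery developed for Theorem~\ref{teo1} carries through with only a harmless shift in the radial spectrum. Concretely, take $\eta(r,\theta)=c\log r$ for some real constant $c\neq 0$ (a natural choice suggested by Remark~\ref{Remark-ODE} is $c=-2$). Since $\eta_\theta=0$ and $r\eta_r=c$, working in the orthonormal frame $\{e_1=r\sin\theta\,\partial_r,\,e_2=\sin\theta\,\partial_\theta\}$ one computes $\langle\nabla\eta,\varphi\nabla u\rangle=c\,\varphi\, r\sin^2\theta\, u_r$. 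Combining this with~\eqref{opchengyau}, the drifted operator reads
\begin{equation*}
\Lu u=\varphi\sin^2\theta\bigl(r^2u_{rr}+(1-c)\,r u_r\bigr)+\varphi\sin^2\theta\, u_{\theta\theta}+\sin^2\theta\,\varphi_\theta u_\theta.
\end{equation*}

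Writing $u(r,\theta)=f(r)h(\theta)$ in $-\Lu u=\lambda u$ and using that $\varphi=\varphi(\theta)$ with $\varphi_r=0$, the angular equation one obtains is \emph{literally} \eqref{eq2chengyau-Equiv}, namely $-(\varphi h_\theta)_\theta+\mu\varphi h=\lambda\csc^2\theta\, h$ on $(\theta_0,\theta_1)$. The radial equation becomes $r^2f_{rr}+(1-c)rf_r+\mu f=0$ on $(1,e^{\pi/\ell})$, which after the change of variables $t=\log r$ reduces to $F''-cF'+\mu F=0$ on $(0,\pi/\ell)$ with Dirichlet data. Its solutions are $F(t)=e^{ct/2}\sin(\omega t)$ with $\omega=\sqrt{\mu-c^2/4}$, so the admissible separation constants are $\mu_k=(k\ell)^2+c^2/4$. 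In particular, the crucial fact $\mu_2-\mu_1=3\ell^2$ is preserved. Because $\Lu$ is self-adjoint on $L^2(\Omega,e^{-\eta}\,dvol)$, Courant's nodal domain theorem and the Sturm--Liouville theory (Theorems~\ref{teorsinalu_1} and~\ref{teosturm1}) apply without change, and the identification of eigenvalues of Section~\ref{Id.autovalores} gives $\lambda_1=\lambda_1^{\mu_1}$ and $\lambda_2\in\{\lambda_1^{\mu_2},\lambda_2^{\mu_1}\}$.

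To conclude, I would argue in the regime $\theta_\divideontimes\to\pi/2$ exactly as in Section~\ref{sec3}: applying Lemmas~\ref{lema1} and~\ref{lema2} with the shifted $\mu_1,\mu_2$, one checks that the inequality $\lambda_1^{\mu_2}\le\lambda_2^{\mu_1}$ holds, so $\lambda_2=\lambda_1^{\mu_2}$. Then I would deploy the variational argument of Section~5.3 with the one-parameter family $\mu(s)=\mu_1+3\ell^2 s$: since the $\theta$-equation is unchanged, differentiating in $s$ yields $\dot\lambda(s)=3\ell^2\int_{\theta_0}^{\theta_1}\varphi(h^s)^2\,d\theta$, and Proposition~\ref{prop5.1} still forces $\int(h^s)^2\,d\theta<1$. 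Integrating from $0$ to $1$ gives $\lambda_2-\lambda_1<3\ell^2\delta$, and Lemma~\ref{lema4.1} yields $(\lambda_2-\lambda_1)D^2<3\pi^2\delta$ in the desired limit. The main obstacle I foresee is the bookkeeping needed to reverify the identification $\lambda_2=\lambda_1^{\mu_2}$ after the spectral shift by $c^2/4$: the corresponding analogue of condition~\eqref{eq.12} picks up a $(\delta-\varepsilon\sin^2\theta_\divideontimes)c^2/4$ term on the right-hand side, but since $\pi^2/(\theta_1-\theta_0)^2\to+\infty$ as $\theta_\divideontimes\to\pi/2$, this term is harmlessly absorbed and the argument closes.
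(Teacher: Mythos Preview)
Your argument is correct, but it is genuinely different from the paper's. The paper does not redo the separation of variables at all; instead it makes a \emph{ground-state substitution}. Precisely, it shows that if $\eta$ solves
\[
2\dv(\varphi\nabla\eta)-\langle\nabla\eta,\varphi\nabla\eta\rangle=0,
\]
then the change of variable $u=e^{-\eta/2}v$ transforms $\dv(\varphi\nabla u)=-\lambda u$ into $\Lu v=-\lambda v$, so the two operators have \emph{identical} Dirichlet spectra and Theorem~\ref{teo1} applies verbatim. This is shorter and yields a whole class of drifts (two concrete solutions are exhibited, one angular and one radial in $\ln r$). Your choice $\eta=c\log r$ does \emph{not} satisfy the paper's equation (one computes $2\dv(\varphi\nabla\eta)-\langle\nabla\eta,\varphi\nabla\eta\rangle=-c^2\varphi\sin^2\theta$), so the spectra really are shifted; what you exploit instead is that the shift in the separation constant is \emph{uniform}, $\mu_k=(k\ell)^2+c^2/4$, so $\mu_2-\mu_1=3\ell^2$ survives and the variational step in Section~\ref{sec3} goes through unchanged. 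Thus your route produces additional examples outside the paper's class, at the cost of reverifying the identification $\lambda_2=\lambda_1^{\mu_2}$; the caveat you flag about the extra term $(\delta-\varepsilon\sin^2\theta_\divideontimes)\,c^2/4$ in the analogue of~\eqref{eq.12} is real but, as you note, is absorbed once $\theta_\divideontimes$ is close enough to $\pi/2$.
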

\begin{proof}
We claim that, any solution $\eta$ of the equation
\begin{equation}\label{eq.eta.1}
    2\dv(\varphi \nabla \eta)-\g{\nabla\eta}{\varphi \nabla \eta}=0
\end{equation}
can be taken as a drifting function so that the fundamental gap for the operator $\Lu =\dv(\varphi\nabla \cdot)-\langle\nabla\eta,\varphi\nabla \cdot\rangle$, with $\varepsilon\leq\varphi\leq\delta$ and $\varphi_r=0$, on each set of the family of convex domains $\lim_{\theta_\divideontimes\to \frac{\pi}{2}}\Omega_{\ell,\theta_0,\theta_1}$, where $\Omega_{\ell,\theta_0,\theta_1}$ is defined by~\eqref{OMEGA}, satisfies the required estimate. Indeed, we take $u$ such that $-\lambda u=\dv(\varphi\nabla u)$, and the change of variable $u=e^{-\frac{\eta}{2}}v$ together with the property of divergence of a vector field, to obtain
\begin{equation*}
\dv(\varphi\nabla u)=e^{-\frac{\eta}{2}}\Big( \dv(\varphi\nabla v) -\g{\nabla v}{\varphi \nabla\eta}\Big)-\frac{e^{-\frac{\eta}{2}}v}{4}\Big(2\dv(\varphi\nabla \eta)-\g{\nabla \eta}{\varphi\nabla\eta}\Big).
\end{equation*}
If $\eta$ solves \eqref{eq.eta.1}, then $-\lambda e^{-\frac{\eta}{2}}v=-\lambda u=\dv(\varphi\nabla u) =e^{-\frac{\eta}{2}}\Lu v$. Hence, $\Lu v=-\lambda v$, and thus the estimate of the fundamental gap for $\Lu$ follows from Theorem~\ref{teo1}.
\end{proof}

The reader can see that is easy to obtain solutions for equation~\eqref{eq.eta.1}. Next, we give a simple example.

\begin{example}
Let $\varphi:(1,e^{\frac{\pi}{\ell}})\times(\pi/3,2\pi/3)\subset\mathbb{H}^2\to\mathbb{R}$ be a radially constant function, defined by $\varphi(r,\theta)=\sin\theta$. Note that $\varphi$ and $\theta$ satisfy the conditions in \eqref{OMEGA}, for $\theta_0=\pi/3,$ $\theta_1=2\pi/3,$ $\theta_*=\pi/3,$ $\varepsilon=\sqrt{3}/2,$ $\delta=1$ and $\ell$ to be appropriately chosen. Under these conditions, the functions $\eta(r,\theta)=-2\ln(1-\ln\tan\frac{\theta}{2})$ and $\eta(r,\theta)=-2\ln(\pi-\ell\ln r)$ solve equation~\eqref{eq.eta.1}. 
\end{example}
Indeed, we need to prove that $2\dv(\varphi \nabla \eta)-\g{\nabla\eta}{\varphi \nabla \eta}=0$. For it, we can proceed as in \eqref{PI-quadrado} and \eqref{opchengyau}, to get the next expression, which is valid for any $\varphi$ radially constant and for any $\eta,$
\begin{align*}
2\dv(\varphi \nabla \eta)-\g{\nabla\eta}{\varphi \nabla \eta}&= 2(\varphi(r^2\sin^2\theta\eta_{rr}+r\sin^2\theta\eta_r)+\varphi\sin^2\theta\eta_{\theta\theta}+\sin^2\theta \varphi_\theta\eta_\theta)\\
&\quad-\varphi(r^2\sin^2\theta \eta_r\eta_r+\sin^2\theta \eta_\theta \eta_\theta).
\end{align*}
For $\eta(r,\theta)=-2\ln(1-\ln\tan\frac{\theta}{2})$, we have
\begin{eqnarray*}
\eta_r=0,\,\,\, \eta_\theta=\frac{2\csc\theta}{1-\ln\tan\frac{\theta}{2}} \,\,\,\mbox{and}\,\,\, \eta_{\theta\theta}=\frac{-2\csc\theta\cot\theta}{1-\ln\tan\frac{\theta}{2}}+\frac{2\csc^2\theta}{(1-\ln\tan\frac{\theta}{2})^2}.
\end{eqnarray*}
Hence, 
\begin{equation*}
2\dv(\varphi \nabla \eta)-\g{\nabla\eta}{\varphi \nabla \eta}=2\varphi\sin^2\theta \eta_{\theta\theta}+2\sin^2\theta \varphi_\theta \eta_\theta-\varphi\sin^2\theta \eta_\theta^2=0.
\end{equation*}
For $\eta(r,\theta)=-2\ln(\pi-\ell\ln r)$, the computation is simple as in the previous case.

\begin{remark} For the Laplacian case with Dirichlet boundary conditions on convex domains in $\mathbb{H}^n$, $n \geq 2$, Bourni et al. proved that the product of the fundamental gap with the square of the diameter can be arbitrarily small for domains of any diameter, see ~\cite[Theorem~1.1]{Bourni1}. Then, we can say that they also proved that the drifted Laplacian has the same property when its drifting function solves $2\Delta\eta-|\nabla\eta|^2=0.$
\end{remark}

\section{Concluding remarks}\label{ConcludeRemars}
Here, we give some applications of Theorem~\ref{thmA3-c3} in a more general context.
\begin{corollary}\label{Cor1ConcRem}
Let $\lambda_1$ be the first eigenvalue of the drifted Cheng-Yau operator with a drifting function $\eta$ on a bounded domain $\Omega\subset M^n$ with the Dirichlet boundary condition. Fix an origin $o\in M^n\backslash\overline{\Omega}$, and let $r(x)$ be the distance function from $o$. If $T$ is radially parallel and it has $\partial_r$ as an eigenvector, then:
\begin{enumerate}
\item For $a(n,\varepsilon,\delta)\leq0$,
\begin{equation*}
\lambda_1(\Omega) \geq \frac{\varepsilon}{4\delta^2}\Big[(n-1)^2\varepsilon^2\kappa^2_2
-2(n-1)(\delta^2\kappa^2_1-\varepsilon^2\kappa^2_2) - 2C_0(n-1)\big(\kappa_1+\frac{1}{d}\big) - C_1\Big].
\end{equation*}
\item For $a(n,\varepsilon,\delta)>0$,
\begin{equation*}
\lambda_1(\Omega)\geq \frac{\varepsilon}{4\delta^2}\Big[(n-1)^2\varepsilon^2\kappa^2_2
-2(n-1)(\delta^2\kappa^2_1-\varepsilon^2\kappa^2_2) - 2C_0(n-1)\big(\kappa_1+\frac{1}{d}\big) - C_1 -\frac{a(n,\varepsilon,\delta)}{d^2}\Big],
\end{equation*}
\end{enumerate}
where the constants $d$, $C_0$ and $C_1$ are as in Theorem~\ref{thmA3-c3}.
\end{corollary}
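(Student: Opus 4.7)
The plan is to specialize Theorem~\ref{thmA3-c3} to $k=1$ and then isolate $\lambda_1$ from the resulting quadratic inequality. With $k=1$, each summation collapses to a single term, so the conclusion of the theorem reduces to
\begin{equation*}
(\lambda_2 - \lambda_1)^2 \le \tfrac{1}{\varepsilon}(\lambda_2 - \lambda_1)\, B_1,
\end{equation*}
where $B_1 = \tfrac{4\delta^2}{\varepsilon}\lambda_1 + U$, and $U$ denotes the curvature and drifting terms appearing in the bracket of case (1) or case (2) of Theorem~\ref{thmA3-c3} -- in case (2) one has an additional summand $a(n,\varepsilon,\delta)/d^2$.

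To extract a lower bound on $\lambda_1$ from this quadratic estimate, I would argue that $B_1 \ge 0$ unconditionally. The cleanest way is to back up one step and combine \eqref{Eq1-Prop1GM} and \eqref{Eq2-Prop1GM} used in the proof of Proposition~\ref{Prop1GM}, rather than divide by $\lambda_2 - \lambda_1$ (which would require assuming simplicity of $\lambda_1$). Taking $i=1$ in \eqref{Eq2-Prop1GM} one has
\begin{equation*}
0 \le \int_\Omega\bigl\{u_1 \Lu r + 2T(\partial_r,\nabla u_1)\bigr\}^2 \dm \le \tfrac{4\delta^2\lambda_1}{\varepsilon} - \int_\Omega u_1^2\bigl\{(\Lu r)^2 + 2\langle T\partial_r, \nabla \Lu r\rangle\bigr\}\dm,
\end{equation*}
and then the upper bounds from Propositions~\ref{lemmaboxestimate} and \ref{Prop-C0} (selected according to the sign of $a(n,\varepsilon,\delta)$, together with the Cauchy--Schwarz step applied in the proof of Theorem~\ref{thmA3-c3}) control the right-hand integral above by $U$. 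This forces $B_1 \ge 0$, which is precisely the trick used at the end of the proof of Theorem~\ref{propCYT-c3}.

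From $B_1 \ge 0$ we obtain $\lambda_1 \ge \tfrac{\varepsilon}{4\delta^2}(-U)$. Plugging in the explicit value of $U$ in the case $a(n,\varepsilon,\delta)\le 0$ yields item~(1) of the corollary, while the case $a(n,\varepsilon,\delta)>0$, with the extra summand $a(n,\varepsilon,\delta)/d^2$ inside $U$, yields item~(2). I do not anticipate a serious obstacle: all the quantitative content -- namely the estimates of $(\Box r)^2$, $\langle\nabla\Box r, T\partial_r\rangle$, and $\Box r\langle T\partial_r,\nabla\eta\rangle$ under radial parallelism and the eigenvector condition on $\partial_r$ -- is already packaged into Theorem~\ref{thmA3-c3} via the auxiliary propositions, so the present corollary is essentially the ``$k=1$ readout'' of that theorem.
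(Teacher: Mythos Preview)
Your proposal is correct and matches the paper's own argument: the paper likewise bypasses the $k=1$ specialization of Theorem~\ref{thmA3-c3} and instead observes directly that $0\le \|u_i\mathscr{L}r+2T(\partial_r,\nabla u_i)\|_{L^2(\Omega,\dm)}^2\le \upsilon_i$ (your $B_1$ when $i=1$), the upper bound coming from \eqref{Eq2-Prop1GM} together with the estimates packaged into Theorem~\ref{thmA3-c3}. Taking $i=1$ then yields the claimed lower bounds on $\lambda_1$, exactly as you outline.
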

\begin{proof}
For simplicity and further reference, we define the auxiliary sequences:
\begin{enumerate}
\item\label{vi1-Cor} For $a(n,\varepsilon,\delta)\leq0$,
\begin{equation*}
\upsilon_i:=\frac{4\delta^2}{\varepsilon}\lambda_i -(n-1)^2\varepsilon^2\kappa^2_2
+2(n-1)(\delta^2\kappa^2_1-\varepsilon^2\kappa^2_2)+ 2C_0(n-1)\big(\kappa_1+\frac{1}{d}\big)+C_1.
\end{equation*}
\item\label{vi2-Cor} For $a(n,\varepsilon,\delta)>0$,
\begin{equation*}
\upsilon_i:=\frac{4\delta^2}{\varepsilon}\lambda_i -(n-1)^2\varepsilon^2\kappa^2_2
+2(n-1)(\delta^2\kappa^2_1-\varepsilon^2\kappa^2_2)+ 2C_0(n-1)\big(\kappa_1+\frac{1}{d}\big)+C_1+\frac{a(n,\varepsilon,\delta)}{d^2},
\end{equation*}
\end{enumerate}
where the constants $d$, $C_0$ and $C_1$ are as in Theorem~\ref{thmA3-c3}. From \eqref{Eq1-Prop1GM} and the inequalities in Theorem~\ref{thmA3-c3}, we have 
\begin{equation*}
0\leq\Vert u_i\mathscr{L}r+2T(\partial_r,\nabla u_i)\Vert_{L^2(\Omega,\dm)}^2\leq \upsilon_i.
\end{equation*}
Take $i=1$ to obtain the result of the corollary.
\end{proof}

Now, we prove some estimates of eigenvalues of the drifted Cheng-Yau operator. In particular, we obtain the corresponding estimates for the Cheng-Yau operator, the drifted Laplacian and the Laplacian. For more in-depth information on these estimates, we refer the reader to \cite{GomesMiranda}.

\begin{theorem}\label{StandarEstimate-CYO}
Let $\lambda_i$ be the $i$-th eigenvalue of the drifted Cheng-Yau operator with a drifting function $\eta$ on a bounded domain $\Omega\subset M^n$ with the Dirichlet boundary condition. Fix an origin $o\in M^n\backslash\overline{\Omega}$, and let $r(x)$ be the distance function from $o$. Suppose $T$ is radially parallel and it has $\partial_r$ as an eigenvector. Then, the sequence of eigenvalues $(\lambda_i)$ satisfies the following estimates in terms of the auxiliary sequences $(v_i)$ defined in  Corollary~\ref{Cor1ConcRem}:
\begin{equation}\label{2-Aux-Num}
\upsilon_{k+1}\leq \Big(1+\frac{4\delta^2}{\varepsilon^2}\Big)k^\frac{2\delta^2}{\varepsilon^2}\upsilon_{1}.
\end{equation}
\begin{equation}\label{3-Aux-Num}
\upsilon_{k+1}\leq\frac{1}{k}\Big(1+\frac{4\delta^2}{\varepsilon^2}\Big)\sum_{i=1}^k\upsilon_i.
\end{equation}
\begin{equation}\label{4-Aux-Num}
\upsilon_{k+1}\leq \Big(1+\frac{2\delta^2}{\varepsilon^2}\Big)\frac{1}{k}\sum_{i=1}^k\upsilon_{i}+\Big[\Big(\frac{2\delta^2}{\varepsilon^2}\frac{1}{k}\sum_{i=1}^{k}\upsilon_i\Big)^2-\Big(1+\frac{4\delta^2}{\varepsilon^2}\Big)\frac{1}{k}\sum_{j=1}^k\Big(\upsilon_{j}-\frac{1}{k}\sum_{i=1}^{k}\upsilon_{i}\Big)^2\Big]^\frac{1}{2}.
\end{equation}
\begin{equation}\label{5-Aux-Num}
\upsilon_{k+1}-\upsilon_{k}\leq 2\Big[\Big(\frac{2\delta^2}{\varepsilon^2}\frac{1}{k}\sum_{i=1}^{k}\upsilon_{i}\Big)^2-\Big(1+\frac{4\delta^2}{\varepsilon^2}\Big)\frac{1}{k}\sum_{j=1}^k\Big(\upsilon_{j}-\frac{1}{k}\sum_{i=1}^{k}\upsilon_{i}\Big)^2\Big]^\frac{1}{2}.
\end{equation}
\end{theorem}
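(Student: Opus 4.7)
The strategy is to first convert Theorem~\ref{thmA3-c3} into a single Yang-type inequality for the sequence $(\upsilon_i)$ alone, and then to extract the four estimates \eqref{2-Aux-Num}--\eqref{5-Aux-Num} from this one inequality by standard algebraic manipulations of Cheng--Yang type. Since in both cases of Corollary~\ref{Cor1ConcRem} the $\upsilon_i$ differ from $\frac{4\delta^2}{\varepsilon}\lambda_i$ only by a constant that is independent of $i$, one has $\upsilon_{k+1}-\upsilon_i=\frac{4\delta^2}{\varepsilon}(\lambda_{k+1}-\lambda_i)$; substituting $\lambda_{k+1}-\lambda_i=\frac{\varepsilon}{4\delta^2}(\upsilon_{k+1}-\upsilon_i)$ into the inequality of Theorem~\ref{thmA3-c3} and multiplying through by $16\delta^4/\varepsilon^2$ yields
\begin{equation*}
\sum_{i=1}^{k}(\upsilon_{k+1}-\upsilon_i)^2\leq C\sum_{i=1}^{k}(\upsilon_{k+1}-\upsilon_i)\upsilon_i,\qquad C:=\frac{4\delta^2}{\varepsilon^2}.
\end{equation*}

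Writing $A_k:=\tfrac{1}{k}\sum_{i=1}^{k}\upsilon_i$ and $B_k:=\tfrac{1}{k}\sum_{i=1}^{k}\upsilon_i^2$, expansion of both sums above is equivalent to the quadratic constraint $\upsilon_{k+1}^2-(2+C)A_k\upsilon_{k+1}+(1+C)B_k\le 0$. Using the identity $(2+C)^2-4(1+C)=C^2$, the discriminant equals $C^2A_k^2-4(1+C)(B_k-A_k^2)$, so solving this quadratic for $\upsilon_{k+1}$ yields exactly \eqref{4-Aux-Num} with $(2+C)/2=1+2\delta^2/\varepsilon^2$ and $C/2=2\delta^2/\varepsilon^2$. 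Inequality \eqref{3-Aux-Num} follows by noting $\tfrac{2+C}{2}A_k+\tfrac{C}{2}A_k=(1+C)A_k$ and discarding the nonnegative variance term $(1+C)(B_k-A_k^2)\ge 0$ under the square root of \eqref{4-Aux-Num}. Inequality \eqref{5-Aux-Num} is obtained by observing that the quadratic confines $\upsilon_{k+1}$ to an interval $[p_-,p_+]$ whose length is exactly $2T_k$, twice the square root appearing in \eqref{4-Aux-Num}; combined with $\upsilon_k\le\upsilon_{k+1}\le p_+$ and the analogous lower bound $\upsilon_k\ge p_-$, propagated inductively from the quadratic at the preceding step, this yields the desired gap estimate.

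The polynomial growth bound \eqref{2-Aux-Num} is the most delicate of the four, since iterating \eqref{3-Aux-Num} directly via $S_{k+1}\le\frac{k+1+C}{k}S_k$, with $S_k:=\sum_{i=1}^{k}\upsilon_i$, only produces an exponent of order $C$ rather than the sharp $C/2=2\delta^2/\varepsilon^2$. The sharp exponent is achieved through the Cheng--Yang recursion formula: one introduces an auxiliary monotone quantity built from partial sums and variances, and exploits the full quadratic of the previous paragraph to show that it grows at most polynomially with exponent $C/2$. The expected main obstacle therefore lies precisely in executing this recursion carefully for the present sequence $(\upsilon_i)$; once the recursion is in place, all four inequalities follow.
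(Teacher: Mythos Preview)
Your proposal is correct and follows essentially the same route as the paper. The paper performs the identical affine substitution $\upsilon_i=\tfrac{4\delta^2}{\varepsilon}\lambda_i+C$ to convert Theorem~\ref{thmA3-c3} into the Yang-type inequality $\sum_{i=1}^k(\upsilon_{k+1}-\upsilon_i)^2\le\tfrac{4\delta^2}{\varepsilon^2}\sum_{i=1}^k(\upsilon_{k+1}-\upsilon_i)\upsilon_i$, and then simply cites Cheng--Yang~\cite{Cheng-Yang,Cheng-Yang-III} for \eqref{2-Aux-Num} and refers to the steps of \cite[Theorem~3]{GomesMiranda} for \eqref{3-Aux-Num}--\eqref{5-Aux-Num}; you instead sketch those algebraic derivations explicitly, which is fine. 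One small point: your justification of \eqref{5-Aux-Num} via ``$\upsilon_k\ge p_-$ propagated from the preceding step'' is the right idea but slightly underspecified---the clean way is the identity $Q_k(\upsilon_k)=\tfrac{k-1}{k}Q_{k-1}(\upsilon_k)\le0$, which places $\upsilon_k$ between the roots of the \emph{same} quadratic $Q_k$ that bounds $\upsilon_{k+1}$.
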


\begin{proof}
Note that we can write each case of the sequence $(\upsilon_i)$ as follows
\begin{equation}\label{Aux-Number}
\upsilon_i=\frac{4\delta^2}{\varepsilon}\lambda_i + C,
\end{equation}
for an appropriated constant $C$. With this simplified notation, inequalities of Theorem~\ref{thmA3-c3} become
\begin{equation*}
\sum_{i=1}^k(\lambda_{k+1}-\lambda_i)^2 \leq\frac{1}{\varepsilon}\sum_{i=1}^k(\lambda_{k+1} -\lambda_i)\Big(\frac{4\delta^2}{\varepsilon}\lambda_i +C \Big).
\end{equation*}
By \eqref{Aux-Number}, we get 
\begin{equation*}
\lambda_{k+1}-\lambda_i= \frac{\varepsilon}{4\delta^2}(\upsilon_{k+1}-\upsilon_i).
\end{equation*}
Then,
\begin{equation}\label{1-Aux-Num}
\sum_{i=1}^k(\upsilon_{k+1}-\upsilon_i)^2 \leq \frac{4\delta^2}{\varepsilon^2} \sum_{i=1}^k (\upsilon_{k+1}-\upsilon_i)\upsilon_i.
\end{equation}
Besides, note that $\upsilon_1\leq\upsilon_2\leq\cdots\to\infty$, since  $\lambda_{1}\leq\lambda_{2}\leq\cdots\to\infty$. Hence, each case of the sequence $(\upsilon_i)$ satisfies the assumptions as in Cheng and Yang~\cite{Cheng-Yang,Cheng-Yang-III}, from which we get \eqref{2-Aux-Num}. For a complete proof, see Miranda~\cite[Lemma~2.4 and Corollary~2.1]{Juliana}.

By using \eqref{1-Aux-Num} we follow the same steps as in Theorem~3 of \cite{GomesMiranda} to prove the estimates in \eqref{3-Aux-Num}, \eqref{4-Aux-Num} and \eqref{5-Aux-Num}.
\end{proof}

Finally, we make some important observations of independent interesting for the more general expression of $\mathscr{L}$. For this, let $N(\lambda)$ be the eigenvalue distribution function given by the number of eigenvalues $\lambda_k$ smaller than a given $\lambda$. We observe that the principal symbol of $\mathscr{L}$ is given by $T_x(\xi,\xi)$ for $x\in M^n$ and $\xi\in T_xM$, see Eqs.~\eqref{quadrado} and \eqref{L-Box-div}. It determines the first term in the asymptotics of the eigenvalue distribution function, as follows:
\begin{equation}\label{N}
	N(\lambda)=c_0\lambda^{\frac{n}{2}}+O(\lambda^{\frac{n-1}{2}})\quad\hbox{as}\quad \lambda\to\infty
\end{equation}
where 
$$c_0=(2\pi)^{-n}\mathrm{vol}\big\{(x,\xi):T_x(\xi,\xi)\leq 1\big\}=(2\pi)^{-n}\int_{\{T_x(\xi,\xi)\leq 1\}}dxd\xi.$$
For more details, see \cite{LH,VIv,RSee,DGV}. From \eqref{N}, we obtain
\begin{equation}\label{O1}
\lambda_k=c_0^{-\frac{2}{n}}k^{\frac{2}{n}}+O(1)\quad\hbox{as}\quad k\to\infty.
\end{equation}
We now deduce two identities that have been used as necessary tools on eigenvalue problems in particular cases of $\mathscr{L}$, namely:
\begin{equation}\label{limlambda}
\lim_{k\rightarrow\infty}\frac{\frac{1}{k}\sum_{i=1}^k\lambda_i}{k^{\frac{2}{n}}}=\frac{n}{n+2}c_0^{-\frac{2}{n}}
\quad\hbox{and}\quad 
\lim_{k\rightarrow\infty}\frac{\frac{1}{k}\sum_{i=1}^k\lambda_i^2}{k^{\frac{4}{n}}}=\frac{n}{n+4}c_0^{-\frac{4}{n}}.
\end{equation}
Indeed, from \eqref{O1} we get 
\begin{equation}\label{Aux-O1}
\frac{\frac{1}{k}\sum_{i=1}^k\lambda_i}{k^\frac{2}{n}}=c_0^{-\frac{2}{n}}\sum_{i=1}^k\Big(\frac{i}{k}\Big)^\frac{2}{n}\frac{1}{k}+\frac{O(1)}{k^\frac{2}{n}}
\end{equation}
Consider $f(t)=t^\frac{2}{n}$ on $[0,1]$, and the partition $0<\frac{1}{k}<\frac{2}{k}<\cdots<\frac{i}{k}<\cdots<1$ of $[0,1]$ so that
\begin{equation}\label{Aux-O11}
\lim_{k\to\infty}\sum_{i=1}^kf\Big(\frac{i}{k}\Big)\frac{1}{k}=	\lim_{k\to\infty}\sum_{i=1}^k\Big(\frac{i}{k}\Big)^\frac{2}{n}\frac{1}{k}=\int_0^1t^\frac{2}{n}dt=\frac{n}{n+2}.
\end{equation}
Thus, the first identity in \eqref{limlambda} follows from \eqref{Aux-O1}. 

For the second identity in \eqref{limlambda}, we begin by noting that 
\begin{equation*}
\lambda_k^2=c_0^{-\frac{4}{n}}k^{\frac{4}{n}}+2c_0^{-\frac{2}{n}}k^{\frac{2}{n}}O(1)+O(1)^2\quad\hbox{as}\quad k\to\infty.
\end{equation*}
So,
\begin{equation}\label{Aux-O1^2}
\frac{\frac{1}{k}\sum_{i=1}^k\lambda_i^2}{k^\frac{4}{n}}=c_0^{-\frac{4}{n}}\sum_{i=1}^k\Big(\frac{i}{k}\Big)^\frac{4}{n}\frac{1}{k}+2O(1)c_0^{-\frac{2}{n}}\frac{1}{k^{\frac{2}{n}}}\sum_{i=1}^k\Big(\frac{i}{k}\Big)^\frac{2}{n}\frac{1}{k} + \frac{O(1)^2}{k^\frac{4}{n}}.
\end{equation}
As in the first case, we use the function $t\mapsto t^\frac{4}{n}$ to get
\begin{equation*}
\lim_{k\to\infty}\sum_{i=1}^k\Big(\frac{i}{k}\Big)^\frac{4}{n}\frac{1}{k}=\int_0^1t^\frac{4}{n}dt=\frac{n}{n+4}.
\end{equation*}
Hence, the second identity in \eqref{limlambda} follows from \eqref{Aux-O11} and \eqref{Aux-O1^2}.

\section{Acknowledgements}
The authors would like to express their sincere thanks to the Department of Mathematics at Universidade Federal de São Carlos, where part of this work was carried out. Also, they are grateful to full professor Marcus A. M. Marrocos at Universidade Federal do Amazonas for his time as well as inspiring and helpful discussions. The second author is partially supported by Conselho Nacional de Desenvolvimento Científico e Tecnológico (CNPq), of the Ministry of Science, Technology and Innovation of Brazil, Grants:	428299/2018-0 and 310458/2021-8.

\end{document}